\numberwithin{equation}{section}
\newtheorem{theorem}{Theorem}[section]
\newtheorem{corollary}[theorem]{Corollary}
\newtheorem{lemma}[theorem]{Lemma}
\newtheorem{proposition}[theorem]{Proposition}
\newtheorem{definition}[theorem]{Definition}
\newtheorem{remark}{Remark}[section]
\renewcommand{\theenumi}{\roman{enumi}}
\def\csubA{c_{\text{A}}}
\def\csubL{c_{\text{L}}}
\def\cA{\mathcal{A}}
\def\calB{\mathcal{B}}
\def\cM{\mathcal{M}}
\def\cP{\mathcal{P}}
\def\calB{\mathcal{B}}
\def\cX{\mathcal{X}}
\def\k0{\kappa_0}
\def\cL{c_{\text{L}}}
\def\uh{\hat{u}}
\def\vh{\hat{v}}
\def\lgl{\langle}
\def\rgl{\rangle}
\def\bC{{\mathbb C}}
\def\bR{{\mathbb R}}
\def\bZ{{\mathbb Z}}
\def\lgl{\langle}
\def\rgl{\rangle}
\def\mF{\Phi}
\def\mp{{\mathfrak p}}
\def\mq{{\mathfrak q}}
\begin{document}

\title[A determining form for the NSE]
{A determining form for the 2D Navier-Stokes equations - the Fourier modes case}
\author[C. Foias]{Ciprian Foias$^{1}$}
\address{$^1$Department of Mathematics\\
Texas A\&M University\\ College Station, TX 77843}
\author[M.S. Jolly]{Michael S. Jolly$^{2}$}
\address{$^2$Department of Mathematics\\
Indiana University\\ Bloomington, IN 47405}
\author[R. Kravchenko]{Rostyslav Kravchenko$^{3,\dagger}$}
\address{$^3$ Department of Mathematics\\
Chicago University \\Chicago, IL 60637 }
\author[E. S. Titi]{Edriss S. Titi$^{4}$}
\address{$^4$ Department of Mathematics and Department
of Mechanical and Aerospace Engineering \\
University of California \\
Irvine, California 92697\\
Also:\\
Department of Computer Science and Applied Mathematics\\
Weizmann Institute of Science\\
Rehovot, 76100, Israel.\\
Fellow of the Center of Smart Interfaces (CSI), Technische Universit\"{a}t Darmstadt, Germany}
\address{$\dagger$ corresponding author}
\email[C. Foias]{foias@math.tamu.edu}
\email[M. S. Jolly]{msjolly@indiana.edu}
\email[R. Kravchenko] {rkchenko@gmail.com}
\email[E. S. Titi] {etiti@math.uci.edu}


\date{August 25, 2012}

\subjclass[2010]{76D05,34G20,37L05, 37L25}
\keywords{Navier-Stokes equations, determining forms, determining modes, inertial manifolds, dissipative dynamical systems.}
\begin{abstract}
The determining modes for the two-dimensional incompressible Navier-Stokes equations (NSE)
are shown to satisfy an ordinary differential equation of the form $dv/dt=F(v)$,
in the Banach space, $X$,  of all bounded continuous functions of the variable  $s\in\mathbb{R}$ with values in certain finite-dimensional linear space.  This new evolution ODE, named {\it determining form}, induces an infinite-dimensional dynamical system in the space $X$ which  is noteworthy for two reasons.
One is that $F$ is globally Lipschitz from $X$ into itself.  The other is that the long-term dynamics of the determining form contains that of the NSE; the traveling wave solutions of the determining form, i.e.,
those of the form $v(t,s)=v_0(t+s)$, correspond exactly to initial data $v_0$ that are projections of
solutions of the global attractor of the NSE onto the determining modes.
The determining form is also shown to
be dissipative; an estimate for the radius of an absorbing ball is derived
in terms of the number of determining modes and the Grashof number (a dimensionless physical parameter).  Finally, a
unified approach is outlined for an ODE satisfied by a variety of other determining parameters
such as nodal values, finite volumes, and finite elements.
\end{abstract}
\maketitle

{\it Dedicated to Professor Peter Constantin on the occasion of his 60th birthday.}

\section{Introduction}

The notion of determining modes was introduced in \cite{CFMT,FMTT,FP,JT93} as a way to
gauge the number of degrees of freedom for the Navier-Stokes equations (NSE).
 Determining modes can be described
as a (finite) subset of Fourier modes, such that for any solution $\{u(t): t\in\mathbb{R}\}$ in the global
attractor $\cA$  its projection into those modes, denoted by $\{v(t): t\in\mathbb{R}\}$, determines the complete solution. In particular, the projection of $\{u(t): t\in\mathbb{R}\}$ into the higher modes, denoted by $\{w(t): t\in\mathbb{R}\}$, is  determined uniquely by $\{v(t): t\in\mathbb{R}\}$. Thus, there is a well defined map $W: v(\cdot) \mapsto w(\cdot)$, which will play a major role in this paper. The number of determining modes is associated
with the spatial complexity
of the long-time dynamics, and hence suggests the resolution needed for numerical computations and data assimilations (see, e.g., \cite{HOTi,OTi1}).   In this sense, estimates for number of determining modes is used much like those for the dimension of the global attractor \cite{CF88,CFT,T97}.

If an inertial
manifold exists, the high wave-number  spatial modes
are enslaved by a finite set of low wave-number modes \cite{CF88,CFNTbook,FST,FSTi,T97}.  As a consequence, if the low wave-number modes
are specified at just one point in time, the high wave-number modes are determined for all time.
In the case of an inertial manifold, the low modes satisfy an ordinary differential
equation called the {\it inertial form} whose phase space is finite-dimensional.  The existence of an inertial
manifold for the NSE, however, remains open.

Since a finite set of modes, if specified for all time, can determine the solutions
to the NSE, a natural question is whether they satisfy an {\it ordinary} differential equation  in a phase space of trajectories, the dynamics of which is consistent with  those of the NSE.  In other words,
can one find an analog of the inertial form for determining modes?
We show in Section \ref{detformsec} that the determining modes  $v$ for the NSE indeed satisfy
a {\it determining form}, which is an ordinary differential
equation of the form $dv/dt=F(v)$, where $F:X \to X$ is globally Lipschitz map, and $X$ is the Banach space of all bounded continuous
functions from $\mathbb{R}$ into the linear finite-dimensional space of the low wave-number modes, given in \eqref{2.5c}.
The key step is to extend beyond the global attractor
the enslavement of the high modes  mapping $W:v(\cdot) \mapsto w(\cdot)$, defined above, to  the space $X$ given in \eqref{2.5c}.  This is done in Section \ref{secExtend} by
analyzing a differential equation satisfied by $w(\cdot)$, and establishing the Lipschitz property of $W$.
We then show in Section \ref{absballsec} that the resulting determining form, $dv/dt=F(v)$, is
dissipative, and estimate  the radius of its absorbing ball in terms
of the number of determining modes, and the Grashof number (a dimensionless physical parameter given in \eqref{Grashof}).

Though the number of determining modes is finite, the ODE
they satisfy, i.e. the determining form, induces an infinite-dimensional dynamical system, since it is an evolution ODE  in the infinite-dimensional Banach  space $X$.   Just as the set of partial differential equations that comprise the incompressible NSE can be written as an evolution equation, involving unbounded operators, in
a Hilbert space $H$ of functions in the spatial variable, the determining
modes evolve as solutions to an ODE in a Banach space $X$ of functions in an auxiliary time variable $s$.  A significant difference, however, is that the vector field that governs the evolution in the determining form ODE is a globally Lipschitz map from the space $X$ into itself, whereas in the case of  the NSE evolution equation it involves unbounded operators.  Rather than being a dimension reduction of the NSE, the determining
form is in fact an embedding of the original dynamics in a larger phase space.
In the phase space $X$, the solutions at any moment in time $t$,
represent complete trajectories $v(t,s), \  -\infty < s < \infty$.

For certain initial trajectories the behavior of solutions to the determining form
is simple to describe.  We show in Section \ref{travwavesec} that traveling wave solutions
to the determining form can be characterized as those satisfying
$v(t,s)=\mp(t+s;u_0), \  -\infty < s < \infty$,
where $\mp(s;u_0)$ denotes the projection {\bf into} the determining modes of
the solution at time $s$ to the NSE with initial condition $u_0 \in \cA$.
As shown in Section \ref{detformsec} any solution with an initial trajectory
$v(0,s), \  -\infty < s < \infty$ which is periodic in $s$, with period $T$, remains periodic in $s$,
with period $T$, regardless of whether this initial trajectory is on the
global attractor of the NSE.
As a special case, any solution with an initial trajectory independent of $s$ remains so; if $v(0,s)=c(0), \  -\infty < s < \infty$,
then $v(t,s)=c(t), \  -\infty < s < \infty$.

We also compare the general long time behavior of the determining form with that of
the NSE.   In Section \ref{weaksec} we consider the sequences $v_n(t,s)=v(t+t_n,s-(t+t_n))$ and
$w_n(t,s)=w(t+t_n,s-(t+t_n))$.
We show that $v_n+w_n$ converges weakly to a solution of the NSE with an additional
Reynolds force which has only finitely many modes.   The stationary solutions of
the determining form satisfy a differential algebraic system: an algebraic relation for $v$,
coupled with a differential equation for $w$.   Analogs of the energy and enstrophy balances
are used in Section \ref{statsec} to show that, as in the case of the NSE,
a certain extremal condition on norms of the stationary solution occur only for
forces that are eigenfunctions of the Stokes operator.

Over the years it was proven that there are several other finite systems of parameters, different than the modes considered here, which also determine the long-time behavior of the solutions to the Navier-Stokes equations (e.g., determining nodes, finite volumes averages, finite elements, etc. \cite{CJTi1,FMRT,FT1,FT2,JT93}).  It is worth stressing that the determining form studied here uses only determining modes, on which the partial differential operators in the NSE act in an obvious manner; this action is seemingly quite involved for other systems of determining parameters. One also faces a similar problem in representing  inertial manifolds in terms of determining parameters other than the Fourier modes. This challenging general parametrization issue of the inertial manifolds has been solved in \cite{FTi} for the determining nodes case and later in \cite{CJTi1} for the general interpolant case, including the finite elements case etc. In a forthcoming paper \cite{FJKrT2} we will introduce a general, unified construction of determining forms based on any finite system of determining parameters. This unified approach has also been used in \cite{AOTi,ATi} in the context of  continuous data assimilations and finite-dimensional feedback control, respectively.  In section 11 of this paper we briefly illustrate this general construction for the finite system of determining modes considered in this paper.

\section{Preliminaries on the  Navier-Stokes equations} \label{prelim1}

We write the incompressible Navier-Stokes equations
\begin{align}
&\frac{\partial u}{\partial t} -
\nu \Delta u  + (u\cdot\nabla)u + \nabla p = \mF, \\
&\text {div} u = 0 \\
& \int_{\Omega} u\,  dx =0 \;,\qquad \int_{\Omega} \mF\,  dx =0 \\
& u(t_0,x) = u_0(x)
\end{align}
subject to periodic boundary conditions with basic domain $\Omega =[0,L]^2$ as
a differential equation in a certain Hilbert space $H$ (see \cite{CF88} or \cite{T97}),
\begin{equation}\label{NSE}
\begin{aligned}
&\frac{d}{dt}u(t) + \nu Au(t) + B(u(t),u(t)) = f,\\ &u(t) \in H\;,\
t \ge 0\;, \ \text{and} \ u(0)=u_0 \;.
\end{aligned}
\end{equation}
The phase space $H$ is the closure in $L^2(\Omega)^2$
of the set of all  $\mathbb{R}^2$-valued trigonometric polynomials $\phi$ such that
$$
\nabla \cdot \phi = 0, \qquad \text{ and} \quad \int_{\Omega} \phi(x) dx =0.
$$
The bilinear operator $B$ and force $f$ are defined as
\begin{equation}
\label{nabla1}
B(u,v)=\cP\left( (u \cdot \nabla) v \right)\;,\quad f=\cP \mF\;,
\end{equation}
where $\cP$ is the Helmholtz-Leray orthogonal projector of
$L^2(\Omega)^2$ onto $H$, and where $u,v$ are sufficiently smooth.
When the input to $B$ is lengthy, and repeated,
for convenience we will use the shorthand notation
\begin{equation}\label{shorthand}
B(u)=B(u,u)\;.
\end{equation}
Unless specified otherwise,  $f \in H$.
The scalar product in $H$ is taken to be
$$
(u,v)=\int_{\Omega}u(x)\cdot v(x) dx, \quad \text{where} \quad a\cdot b=a_1b_1+a_2b_2,
$$
with associated $L^2$-norm
\begin{equation}\label{L2norm}
|u|=(u,u)^{1/2}=\left(\int_{\Omega} |u(x)| dx\right)^{1/2}\;,
\end{equation}
where, for a vector $a\in \bC^2$, we denote its length by
$|a|=(a\cdot a^*)^{1/2}$, with $a^*=(\bar{a_1},\bar{a_2})$.
It will be clear from the context when $|\cdot|$ refers to
the length of a vector in $\bC^2$, as opposed to an $L^2$-norm.
The operator $A=-\Delta$ is self-adjoint, and its eigenvalues are of the form
$$
\left(\frac{2\pi}{L}\right)^2k\cdot k, \quad \text{where}
\ k \in \bZ^2 \setminus \{ 0\}.
$$
We denote these eigenvalues by
$$0<\lambda _0=\left(2\pi/ L\right)^2
\leq \lambda _1\leq\lambda_2 \le \cdots $$
arranged in increasing order and counted according to their multiplicities,
and write $w_0, w_1, w_2,  \ldots$, for the corresponding normalized
eigenvectors (i.e., $|w_j|=1$ and $Aw_j=\lambda_j w_j$ for $j=0,1,2,\ldots$).

The positive roots of the operator $A$ are defined by linearity from
$$
A^{\alpha}w_j= \lambda_j^{\alpha} w_j, \quad \text{for} \ j=0,1,2, \ldots
$$
on the domain
$$
D(A^{\alpha})=\{u \in H: \sum_{j=0}^{\infty} \lambda_j^{2\alpha}(u,w_j)^2 < \infty\},
$$
for $\alpha \ge 0$.

We take the natural norm on the space $V:=D(A^{1/2})$ to be
\begin{equation}\label{Vnorm}
\|u\|=|A^{1/2}u|=\left(\int_{\Omega}\sum_{j=1}^{2}
\frac{\partial}{\partial x_j}u(x)\cdot
             \frac{\partial}{\partial x_j}u(x)  dx\right)^{1/2}
=\left(\sum_{j=0}^{\infty} \lambda_j(u,w_j)^2 \right)^2.
\end{equation}
Since the boundary conditions are periodic,
we can express any element in $H$ as a Fourier series
\begin{equation}\label{Fourier}
u(x)=\sum_{k\in \mathbb{Z}^2}\uh_k e^{i\kappa_0 k\cdot x}\;,
\end{equation}
where
\begin{equation}\label{k0def}
\kappa_0=\lambda_0^{1/2}=\frac{2\pi}{L}\;, \quad \uh_0=0\;, \quad
\uh_k^*=\uh_{-k}\;,
\end{equation}
and due to incompressibility,
$k \cdot \uh_k=0$.   We associate to each term in  \eqref{Fourier}
a {\it wave number} $\kappa_0|k|$.  Parseval's identity reads as
$$
|u|^2=L^2\sum_{k\in \bZ^2} \uh_k\cdot \uh_{-k} = L^2\sum_{k\in \bZ^2} |\uh_k|^2,
$$
as well as
$$
(u,v)=L^2\sum_{k\in \bZ^2} \uh_k\cdot \vh_{-k}\;,
$$
for $v=\sum \vh_k e^{i\kappa_0 k\cdot x}$.  We define the orthogonal
projectors $P_N,Q_N$,
$P_N : H\to \ \hbox{span}\{w_j|\lambda _j\leq (\kappa_0{N})^2 \}$,  by
$$
P_{N}u =\sum_{|k| \le N} \uh_ke^{i\kappa_0 k\cdot x}\;,
$$
where $u$ has the expansion in \eqref{Fourier}, and $Q_{N}=I-P_{N}$.

Recall the orthogonality relations of the bilinear term
(see, e.g., \cite{CF88,T97})
\begin{equation}\label{flip}
\lgl B(u,v),w \rgl = -\lgl B(u,w),v\rgl\;, \quad u,v,w \in V\;,
\end{equation}
where $\lgl \cdot, \cdot \rgl$ denotes the the action of an element in $V'$,
as well as
\begin{equation}\label{ortho}
(B(u,u),Au) = 0\;,\quad u \in D(A)\;.
\end{equation}
The strengthened form of the enstrophy invariance
\begin{equation}\label{1.7.5}
(B(Av,v),u)=(B(u,v),Av)\;,\quad u\in V\;, \quad v \in D(A)\;
\end{equation}
is proved in \cite{DFJ4}.  Relation \eqref{1.7.5} together with \eqref{flip} implies
\begin{equation}\label{moveu}
(B(v,v),Au) + (B(v,u),Av) + (B(u,v),Av) = 0\;,\quad u,v \in D(A)\;,
\end{equation}
\begin{equation}\label{1.8.5}
(B(Av,v),u) - (B(v,Av),u) + (B(v,v),Au) = 0\;,\quad u,v \in D(A)\;,
\end{equation}
see \cite{FJMR}.
We will also use Agmon's inequality
\begin{equation}\label{Agmon}
\|u\|_{\infty} \leq \csubA|u|^{1/2} |Au|^{1/2}\;, \quad u \in D(A)\;,
\end{equation}
and one of its consequences
\begin{equation}\label{otroB-est}
|(B(u,v),w)| \le \csubA |u|\|v\|^{1/2}|A^{3/2}v|^{1/2}|w|\;, \quad u \in H, v \in D(A^{3/2}), w \in H\;.
\end{equation}
(See \cite{CF88,TemamNSEbook, DG}).


In this paper we will assume $f\in D(A^{j/2})$
for some specified $j\in \{0,1,2,3\}$, which is sufficient to guarantee that
\begin{equation}\label{inprodder}
\frac{1}{2}\frac{d}{dt}|u|^2=(\frac{du}{dt},u)\;, \qquad
\frac{1}{2}\frac{d}{dt}\|u\|^2=(\frac{du}{dt},Au)
\end{equation}
in the sense of distributions
(see Lemma 1.2 in Chapter 3 in \cite{TemamNSEbook} and its proof).
Taking the scalar product of \eqref{NSE} with $u$, respectively $Au$, and applying \eqref{flip}, \eqref{ortho}, and \eqref{inprodder}  gives the energy, enstrophy balance equations
\begin{equation}\label{energyeq}
\frac{1}{2} \frac{d}{dt} |u|^2 + \nu \|u\|^2 = (f,u) \;,
\end{equation}
\begin{equation}\label{enstrophyeq}
\frac{1}{2} \frac{d}{dt} \|u\|^2 + \nu |Au|^2 = (f,Au) \;.
\end{equation}
In the scientific literature,
$$
\k0^2|u|^2 = 2 \text{ times the total {\it energy} per unit mass}\;,
$$
and
$$
\k0^2\|u\|^2= \text{the total {\it enstrophy} per unit mass}\;.
$$

Straightforward
applications of the Cauchy-Schwarz, Young, and Gronwall inequalities
to \eqref{enstrophyeq} gives
\begin{equation}\label{Gronny}
\|u(t)\|^2 \le e^{-\nu\k0^2t}\|u(0)\|^2 + \frac{|f|^2}{\nu^2\k0^2}
\left(1-e^{-\nu\k0^2t}\right) \;.
\end{equation}
If $|f|=0$, we have by \eqref{Gronny} that $\|u(t)\|\to 0$
as $t\to \infty$.  Therefore we assume throughout the paper that
\begin{equation}\label{fbarnot0}
|f|>0\;.
\end{equation}
It is clear from \eqref{Gronny} that  the closed ball $\calB$ of radius
$2\nu\k0 G$, centered at 0, is {\it absorbing}
and invariant, meaning that for any $u_0$ there exists
$T=T(\|u_0\|)$ such that
\begin{equation}\label{AbsTime}
\|u(t;u_0)\| \le 2\nu\k0 G\;, \quad \text{for all } t \ge T\;,
\end{equation}
and
\begin{equation}\label{absball}
\|u(t;u_0)\| \le 2\nu\k0 G \;, \quad \text{for all } t\ge 0\;,  u_0 \in \calB
\end{equation}
where
\begin{equation}\label{Grashof}
G=\frac{|f|}{\nu^2\k0^2}
\end{equation}
is the generalized Grashof number introduced in \cite{FMTT}.
It is well known (see e.g. \cite{FJMR}) that for the flow to be turbulent, we must have
$G \gg 1$.

While most of our results apply for all $G>0$, we will eventually
assume $G \gg 1$.
We also have from \eqref{Gronny} that
\begin{equation}\label{attbnd}
\|u\| \le \nu\k0 G \quad \text{for all}\quad u \in \cA\;,
\end{equation}
where $\mathcal{A}$ is the global attractor, i.e. the minimal compact set in
$V$ which attracts uniformly all solutions of the Navier-Stokes equations
starting from the ball $\{h\in V: \|h\|\leq 2 G\nu\kappa_0\}$. Recall that
this set can also be characterized as
\begin{equation}\label{attractordef}
\mathcal{A}=\{u_0\in H:\exists\quad u(t,u_0)=\text{solution }\forall \ t\in\mathbb{R},\sup_t\|u(t)\|<\infty\}.
\end{equation}
We often denote the
solution as simply $u(t)$
when the choice of $u_0$ is irrelevant to our considerations.
It is easily shown using \eqref{Gronny} that $T$, the
time of absorption in \eqref{AbsTime}, can be taken to be
\begin{equation}\label{AbsTime2}
T(\|u_0\|)=\frac{1}{\nu\k0^2}\max\{1,\log \frac{\|u_0\|^2}{3\nu^2\k0^2G^2}\}\;.
\end{equation}
We will use the following bounds on the nonlinear term (see \cite{DFJ,FMRT,Ti2} and the references therein)
\begin{equation}\label{titi}
|(B(w,u),v)|\leq c_T\|w\|\|u\|\left(\ln\frac{e\|v\|}{\kappa_0|v|}\right)^{1/2}|v|,
\end{equation}
\begin{equation}\label{agmon}
|(B(w,u),v)|\leq c_B\|w\|\|u\|\left(\ln\frac{e|Aw|}{\kappa_0\|w\|}\right)^{1/2}|v|,
\end{equation}
\begin{equation}\label{agmon_1}
|(B(w,u),v)|\leq c_T\|w\|\|u\|\left(\ln\frac{e|Au|}{\kappa_0\|u\|}\right)^{1/2}|v|,
\end{equation}
\begin{equation}\label{Lad}
|(B(u,v),w)|\leq \csubL|u|^{1/2}\|u\|^{1/2}\|v\||w|^{1/2}\|w\|^{1/2},
\end{equation}
\begin{equation}\label{A46a}
|(B(u,v),w)|\leq \csubL|u|^{1/2}\|u\|^{1/2}\|v\|^{1/2}|Av|^{1/2}|w|,
\end{equation}
\begin{equation}\label{A46b}
|(B(u,v),w)|\leq \csubA|u|^{1/2}|Au|^{1/2}\|v\||w|
\end{equation}
\begin{equation}\label{A46c}
|(B(u,v),w)|\leq  \csubA|u|\|v\||w|^{1/2}|Aw|^{1/2}
\end{equation}
which are valid whenever the norms involved make sense.   One can use these inequalities to
extend the domain of $B$ to larger functional spaces.  For instance from \eqref{A46b}
one can infer that $B(u,v)$ can be extended by continuity to a map from $D(A) \times V$ to
$H$.  We will denote these extensions also by  $B$.
\section{Specific preliminaries}

We will use the following elementary estimates in several places.

\begin{lemma}\label{useit}
Let $x(t)$, $y(t)$ be nonnegative functions defined on the interval $I$, where
$I=[t_0,t_1)$, for $t_0$ finite, or $(-\infty,t_1)$. Suppose that $x^2(t)$ is differentiable on $(t_0,t_1)$, and satisfies
\begin{equation}\label{e-q1}
\frac{d}{dt}x^2(t)\leq{}-ay^2(t)+by(t),\quad{}x(t)\leq\varepsilon{}y(t)
\end{equation}
for $t\in I$ where $a,b,\varepsilon$ are positive constants. Then the
following hold:
\begin{enumerate}
\renewcommand{\theenumi}{(\roman{enumi})}
\renewcommand{\labelenumi}{\theenumi}
\item\label{item1} If $x(t)\geq{\varepsilon}b/a$ for all $t\in[t_0,t_1)$, then
\begin{equation}
x(t)-\varepsilon\frac{b}{a}\leq{}e^{-a(t-t_0)/2\varepsilon^2}(x(t_0)-\varepsilon\frac{b}{a});
\end{equation}
\item\label{item2} if $x(\tilde{t})\leq\varepsilon{b/a}$, for some $\tilde{t}\in I$, then $x(t)\leq\varepsilon{b/a}$ for
all $t \in[\tilde{t}, t_1)$;
\item\label{item3} if $x(t)$ is bounded on $(-\infty,t_1)$,  then $x(t)\leq\varepsilon{b/a}$
for all $t\in(-\infty,t_1)$.
\end{enumerate}
\end{lemma}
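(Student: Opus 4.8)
The plan is to eliminate the auxiliary function $y$ from the differential inequality \eqref{e-q1} by exploiting the concavity of the quadratic $g(y):=by-ay^2$, thereby reducing \eqref{e-q1} to a closed scalar differential inequality for $x$ alone, and then to run elementary Gronwall and barrier arguments for the three parts.

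First I would record the key pointwise estimate. The parabola $g(y)=by-ay^2$ increases on $[0,b/(2a)]$ and decreases on $[b/(2a),\infty)$. Whenever $x\ge \varepsilon b/a$, the constraint $x\le \varepsilon y$ forces $y\ge x/\varepsilon \ge b/a > b/(2a)$, so both $y$ and $x/\varepsilon$ lie on the decreasing branch, and $y\ge x/\varepsilon$ gives $g(y)\le g(x/\varepsilon)= -\frac{a}{\varepsilon^2}x^2+\frac{b}{\varepsilon}x$. Since $x\ge \varepsilon b/a>0$ there, $x=\sqrt{x^2}$ is differentiable wherever $x^2$ is, and dividing the resulting inequality $\frac{d}{dt}x^2 \le -\frac{a}{\varepsilon^2}x^2+\frac{b}{\varepsilon}x$ by $2x>0$ yields the scalar inequality
\[
\frac{dx}{dt}\le -\frac{a}{2\varepsilon^2}\Big(x-\varepsilon\frac{b}{a}\Big).
\]

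For \ref{item1}, set $z:=x-\varepsilon b/a\ge 0$; the displayed inequality reads $z'\le -\frac{a}{2\varepsilon^2}z$ on all of $[t_0,t_1)$, and multiplying by the integrating factor $e^{a t/(2\varepsilon^2)}$ and integrating from $t_0$ gives exactly the asserted decay. For \ref{item2} I would argue by continuity: if $x(t^*)>\varepsilon b/a$ for some $t^*\in(\tilde t,t_1)$, let $t_2$ be the last time in $[\tilde t,t^*]$ at which $x\le \varepsilon b/a$ (this supremum is attained since $x$ is continuous and the set contains $\tilde t$); on $(t_2,t^*]$ one has $x>\varepsilon b/a$, hence $y\ge x/\varepsilon > b/a$ and $\frac{d}{dt}x^2\le g(y)<0$, so $x^2$ is strictly decreasing there and $x^2(t^*)<x^2(t_2)\le(\varepsilon b/a)^2$, contradicting $x(t^*)>\varepsilon b/a$. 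For \ref{item3} I would combine the two parts: if $x(t^*)>\varepsilon b/a$ for some $t^*$, then by \ref{item2} no earlier time can have $x\le\varepsilon b/a$, so $x>\varepsilon b/a$ on the whole ray $(-\infty,t^*]$; the scalar inequality then holds throughout, and integrating $z'\le-\frac{a}{2\varepsilon^2}z$ from an arbitrary $t_0<t^*$ gives $z(t^*)\le e^{-a(t^*-t_0)/(2\varepsilon^2)}z(t_0)\le M\,e^{-a(t^*-t_0)/(2\varepsilon^2)}$, where $M$ bounds $x$ on $(-\infty,t_1)$. Letting $t_0\to-\infty$ forces $z(t^*)\le 0$, a contradiction, which proves the claim.

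The only genuinely delicate points are matters of regularity and of staying on the correct branch of $g$: I must ensure $x>0$ before dividing by $x$ (guaranteed by $x\ge\varepsilon b/a>0$ in the regime considered) and that $x/\varepsilon\ge b/(2a)$ so that $g$ is decreasing there (again guaranteed by $x\ge\varepsilon b/a$). The passage $t_0\to-\infty$ in \ref{item3} is precisely where the hypothesis that $x$ be bounded on $(-\infty,t_1)$ enters, and it is the essential reason the half-line interval is singled out as a separate case.
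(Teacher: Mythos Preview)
Your proof is correct and follows essentially the same approach as the paper: both exploit that $g(y)=by-ay^2$ is decreasing for $y\ge b/(2a)$ to replace $g(y)$ by $g(x/\varepsilon)$ in the regime $x\ge \varepsilon b/a$, then reduce to a linear differential inequality for $x-\varepsilon b/a$ (the paper does this via the substitution $c=\varepsilon^{-1}x-b/a$ and dividing by $ac+b$, which is the same division by $2x$ up to a constant). Your barrier argument for (ii) via the last crossing time and your derivation of (iii) from (i) and (ii) match the paper's reasoning, with only cosmetic differences in presentation.
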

\begin{proof}
To prove \ref{item1} 
define $c(t):=\varepsilon^{-1}x(t)-b/a$.  Since
\begin{equation}\label{ybndbelow}
y(t)\geq{} {x(t) \over \varepsilon}=c(t)+{b \over a} \geq{} {b \over a}
\end{equation}
we have
\begin{equation*}
\begin{aligned}
& \frac{dc}{dt}\frac{2\varepsilon^2}{a}(ac+b)=2\varepsilon^2c\frac{dc}{dt}+2\varepsilon^2\frac{b}{a}\frac{dc}{dt}=\frac{d}{dt}x^2\leq{}-ay^2+by\\&\leq{}-a(c+\frac{b}{a})^2+b(c+\frac{b}{a})=-c(ac+b).
\end{aligned}
\end{equation*}
Thus, since $ac+b=ax/\varepsilon \ge b > 0$,
\begin{equation}
e^{at/2\varepsilon^2}c(t)\leq{}e^{at_0/2\varepsilon^2}c(t_0)
\end{equation}
and \ref{item1} follows.

To prove \ref{item2}, suppose that $x(t')>\varepsilon{b/a}$ for some
$t'>\tilde{t}$.  It follows that both $\frac{d}{dt}x^2>0$ and
$x>\varepsilon{b}/a$ hold at some time between
$\tilde{t}$ and $t'$.  We have at that time by \eqref{ybndbelow}
\begin{equation}
0<\frac{d}{dt}x^2\leq{}-ay^2+by\leq{0},
\end{equation}
a contradiction.

\ref{item3} follows from \ref{item1}.
\end{proof}

We also use the following immediate corollary of Lemma~\ref{useit} for the case in which
the coefficient $b$ varies with time.

\begin{corollary}\label{useit1}
Suppose $x(t)$, $y(t)$ are positive functions defined on the interval
$I$, where
$I=[t_0,t_1)$ for $t_0$ finite, or $(-\infty,t_1)$, and $\varepsilon>0$ is
 such that $x(t)\leq\varepsilon{}y(t)$ and
\begin{equation}
\frac{d}{dt}x^2(t)\leq{}-ay^2(t)+b(t)y(t)
\end{equation}
for $t\in I$, with $a,b(t)$ positive, then
\begin{enumerate}
\renewcommand{\theenumi}{(\roman{enumi})}
\renewcommand{\labelenumi}{\theenumi}
\item\label{item11} if $x(\tilde{t})\leq\varepsilon\sup_{t'\leq\tilde{t}}{b(t')/a}$, for some $\tilde{t}\in I$,  then $x(t)\leq\varepsilon\sup_{t'\leq{t}}{b(t')/a}$ for
all $t\in[\tilde{t},t_1)$
\item\label{item21} if $x(t)$ is bounded on $(-\infty, t_1)$, then $x(t)\leq\varepsilon{b/a}$ for all $t\in(-\infty,t_1)$.
\end{enumerate}
\end{corollary}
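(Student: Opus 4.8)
The plan is to reduce both parts to the constant-coefficient Lemma~\ref{useit} by \emph{freezing} $b(\cdot)$ at its running supremum. Set $B(t):=\sup_{t'\leq t}b(t')$ and note the single structural fact that drives everything: $B$ is nondecreasing. The point is that on any interval ending at a time $T$ the time-dependent forcing satisfies $b(t)\leq B(T)$, so that, since $y>0$,
\begin{equation*}
\frac{d}{dt}x^2(t)\leq -ay^2(t)+b(t)y(t)\leq -ay^2(t)+B(T)\,y(t),
\end{equation*}
i.e. the hypotheses of Lemma~\ref{useit} hold verbatim with the \emph{constant} $b=B(T)$ on that interval. The remaining work is then entirely carried by the monotonicity of $B$, which keeps the initial bound compatible with the frozen constant.

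For part \ref{item11} I would fix $T\in(\tilde t,t_1)$; the claim at $t=\tilde t$ is just the hypothesis. If $B(T)=\infty$ the asserted bound is vacuous, so assume $B(T)<\infty$. On $[\tilde t,T]$ apply Lemma~\ref{useit}\ref{item2} with the constant $b=B(T)$: the differential inequality holds as displayed above, and the required starting bound is met because $x(\tilde t)\leq \varepsilon B(\tilde t)/a\leq \varepsilon B(T)/a$ by monotonicity of $B$. Hence $x(t)\leq \varepsilon B(T)/a$ throughout $[\tilde t,T]$, and in particular $x(T)\leq \varepsilon B(T)/a=\varepsilon\sup_{t'\leq T}b(t')/a$. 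Since $T\in(\tilde t,t_1)$ was arbitrary, this is exactly the assertion.

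For part \ref{item21} I would fix $t_*\in(-\infty,t_1)$ and put $b_0:=B(t_*)=\sup_{t'\leq t_*}b(t')$; again one may assume $b_0<\infty$. On $(-\infty,t_*]$ we have $b(t)\leq b_0$, so the differential inequality holds with the constant $b_0$, together with $x\leq\varepsilon y$ and the hypothesized boundedness of $x$. Lemma~\ref{useit}\ref{item3} then yields $x(t)\leq\varepsilon b_0/a$ on $(-\infty,t_*]$, and evaluating at $t_*$ gives $x(t_*)\leq\varepsilon\sup_{t'\leq t_*}b(t')/a$; letting $t_*$ vary gives the claim.

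I expect the only genuine subtlety to be that the reduction \emph{must} go through the qualitative parts \ref{item2} and \ref{item3} rather than the quantitative decay estimate \ref{item1}. If one tried to run the exponential-decay argument directly, freezing $b$ at $B(T)$ on $[t_0,T]$ would require $x(t)\geq\varepsilon B(T)/a$ on the whole interval, whereas the natural contradiction hypothesis only supplies $x(t)>\varepsilon B(t)/a$ with $B(t)\leq B(T)$ --- not enough. The monotonicity of the running supremum is exactly what repairs this and lets the constant-$b$ lemma be applied on each interval without loss. The remaining points --- that an infinite running supremum makes the bound trivial, and that Lemma~\ref{useit} is being invoked on an interval closed at its right endpoint (which its contradiction-plus-decay proof accommodates) --- are routine.
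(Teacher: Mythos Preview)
Your proposal is correct and is precisely the reduction the paper has in mind: the paper gives no explicit proof, stating only that this is ``the immediate corollary of Lemma~\ref{useit} for the case in which the coefficient $b$ varies with time,'' and your freezing of $b(\cdot)$ at its running supremum $B(t)=\sup_{t'\le t}b(t')$ together with the monotonicity of $B$ is exactly how one makes that reduction rigorous. Your reading of part~\ref{item21} as $x(t)\le\varepsilon\sup_{t'\le t}b(t')/a$ is also the intended one (the ``$b/a$'' in the statement is evidently shorthand), and your remark that Lemma~\ref{useit}\ref{item2} applies on a closed interval since its contradiction argument is insensitive to the right endpoint is correct.
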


\begin{definition} \label{detmodedef}
The projection $P_N$ is called {\it determining} if (and only if) for any two solutions $u_1(t),u_2(t)\ (t\in\mathbb{R})$ in $\mathcal{A}$ we have
\begin{equation}
P_Nu_1(t)\equiv P_Nu_2(t)\Longrightarrow u_1(t)\equiv u_2(t)
\end{equation}
\end{definition}

This is equivalent to the original definition in which the high mode trajectories are determined by the limit as $t \to \infty$ of their projections \cite{FK,FMRT,FP}.  Using the original definition, it is shown in \cite{JT93} that
for $P_N$ to be determining, it is sufficient to take $N=O(G)$.  For completeness,
we now reproduce that estimate using Definition \ref{detmodedef}.  We recall (see \cite{FMRT}, Ch.$3$ Sec.$1$) that $u=u_1-u_2$ satisfies
\begin{equation}
\frac{1}{2}\frac{d}{dt}|u|^2+\nu\|u\|^2=-(B(u,u_1),u)\leq \csubL|u|\|u\|\|u_1\|\leq \csubL|u|\|u\|G\nu\kappa_0,
\end{equation}
whence (since $P_Nu=0$ and so $\kappa_0N|u|\leq\|u\|$)

\begin{equation}
\frac{d}{dt}|u|^2+2\nu\|u\|^2\leq 2\csubL\frac{G\nu\kappa_0}{\kappa_0 N}\|u\|^2
\end{equation}
\begin{equation}
\frac{d}{dt}|u|^2+2\nu N^2\lambda_0\left(1-\frac{G\csubL}{N}\right)|u|^2\leq 0
\end{equation}
so that
\begin{equation}
|u(t)|^2\leq |u(t_0)|^2 e^{-2\nu N^2\lambda_0(1-G\csubL/N)(t-t_0)},\quad\forall
\ t_0\leq t.
\end{equation}
Thus if

\begin{equation}\label{2.1}
N>\csubL{}G,
\end{equation}
we conclude (by letting $t_0\rightarrow -\infty$) that $u(t)\equiv 0$; that is if \eqref{2.1} holds, then $P_N$ is determining. Note that \eqref{2.1} implies that if $u_1(t)=u_2(t)$ only for all
 $t\leq T$ then still $u_1(t)\equiv u_2(t)$.
In particular $Q_Nu(\cdot)$ is uniquely determined by $P_Nu(\cdot)$ in case \eqref{2.1} holds.
Then by denoting

\begin{equation}\label{2.2}
\mq(s)=W(s;P_Nu(\cdot))=Q_Nu(s) \quad(s\in\mathbb{R}),
\end{equation}
we have that on the set
$\{P_Nu(\cdot):u(\cdot)\text{ is a solution of \eqref{NSE} in }\mathcal{A}\}$ the map
\begin{equation}\label{2.new}
P_Nu(\cdot)\mapsto \mq(s)=W(s;P_Nu(\cdot)) \quad(s\in\mathbb{R})
\end{equation}
 is well-defined by
the following properties
\begin{equation}\label{2.3a}
\frac{d\mq(s)}{ds}+\nu A\mq(s)=Q_Nf-Q_NB(P_Nu(s)+\mq(s)),\quad\forall \ s\in\mathbb{R}
\end{equation}
\begin{equation}\label{2.3b}
\sup_{s\in\mathbb{R}}\|\mq(s)\|<\infty,
\end{equation}
where we used the shorthand notation in \eqref{shorthand}.

\section{Extending the domain of $W$ to $C_b(\bR,P_NV)$}\label{secExtend}

To ultimately write a differential equation for the low mode trajectories,
we need to extend the domain of $W$ beyond the projection of the global attractor
into $P_NV$.
We define
\begin{equation}\label{2.4a}
\mp(t,s)=P_Nu(s+t),\quad(\forall \ s,t\in\mathbb{R}),
\end{equation}
and apply the projection $P_N$ to \eqref{NSE} to obtain
\begin{equation}\label{2.4b}
\begin{aligned}
\frac{\partial}{\partial t}\mp(t,s)=-\nu A\mp(t,s)+P_Nf-P_NB(\mp(t,s)+W(t+s;P_Nu(\cdot))).
\end{aligned}
\end{equation}
But it is clear that
\begin{equation}\label{2.5a}
W(t+s;P_Nu(\cdot))\equiv W(s;\tau_tP_Nu(\cdot))\equiv W(s;\mp(t,\cdot)),
\end{equation}
where $\tau_t$ is the shift by $t$:
\begin{equation}\label{shiftdef}
\tau_tu(s)=u(s+t)\;, \quad \text{for all } s \in \bR\;.
\end{equation}
Thus \eqref{2.4b} becomes
\begin{equation}\label{2.5b}
\frac{\partial}{\partial t}\mp(t,s)=-\nu A\mp(t,s)+P_Nf-P_NB(\mp(t,s)+W(s;\mp(t,\cdot))).
\end{equation}
We now consider the Banach spaces
\begin{equation}\label{2.5c}
\begin{aligned}
X&=C_b(\mathbb{R},P_NH)=\{v:\mathbb{R}\mapsto P_NH,  v(s)\text{ continuous }\forall\ s\in\mathbb{R},\|v\|_X<\infty\},\\
Y &=C_b(\mathbb{R},Q_NH)=\{w:\mathbb{R}\mapsto Q_NH,  w(s)\text{ continuous }\forall\ s\in\mathbb{R},|w|_Y<\infty\}
\end{aligned}
\end{equation}
where
\begin{equation}
\label{norm_X}
 \|{v}\|_X=\sup_s{\|{v}(s)\|} \;, \quad |w|_Y=\sup_s{|w(s)|}\;.
\end{equation}
Let $\mp(t,s)$ in \eqref{2.4a} be written as $v(t)\in{X}$, so that \eqref{2.5b}
can be viewed as the ``differential equation"
\begin{equation}\label{2.6}
\frac{d}{dt}v(t)=-\nu Av(t)+P_Nf-P_NB(v(t)+W(v(t))) \quad(\forall \ t\in\mathbb{R}),
\end{equation}
where {\it throughout $v(t)$ denotes the function $s\mapsto{v(t,s)}$ in $X$} and $W(v(t))$ denotes the function $s\mapsto{W(s;v(t))}$. We should stress
that $dv/dt$ in \eqref{2.6} is the derivative of a {\it trajectory} in the space $X$.
It is also useful to note that, in particular, the following partial differential equation holds
$$
\frac{\partial}{\partial t}v(t,s)=P_N f - \nu Av(t,s)-P_NB(v(t,s)+W(s;v(t,\cdot)))\;,
$$
for every $s \in \bR$, where $\partial/\partial t$ is taken in $H$.
Note that \eqref{2.6} is defined only for those $v(t)$ that come from $\mp(t,s)$, i.e.
from $u \in \cA$.

Now let $\varphi(\xi)$ be a continuous, piecewise linear function, that is equal to $1$ for
$|\xi|\leq{}2G$, $0$ for $|\xi|\geq{}3G$, where $G$ is the Grashof number given in \eqref{Grashof}, and consider
\begin{equation}
\label{ode_final}
\begin{aligned}
&\frac{d}{dt}v(t)+\nu{A}v(t)+P_NB\left(\varphi v(t)+W(v(t))\right)=P_Nf \;,
\end{aligned}
\end{equation}
where
\begin{equation}\label{varphidef}
\varphi=\varphi\left(\frac{\|v(t)\|}{\nu\kappa_0}\right)\;.
\end{equation}
We will show that $W(v)$ can be extended to a Lipschitz map $X\rightarrow{Y}$
in such a way that \eqref{ode_final} defines an {\it ordinary} differential
equation in $X$, which coincides with \eqref{2.6} on the ball of radius $2\nu\k0G$ in
$X$.   By this we mean that $v(t)$ in \eqref{ode_final} is the solution of
an equation
\begin{equation}\label{2.7a}
\frac{dv}{dt}=F(v) \quad(\forall\ t\in\mathbb{R},v\in X),
\end{equation}
where $F:X\to X$ is a locally Lipschitz function. In fact we will show that $F$
is globally Lipschitz
\begin{equation}\label{2.7b}
\|F(v_1)-F(v_2)\|_X\leq L_F\|v_1-v_2\|_X,
\end{equation}
where the constant $L_F$ is independent of $v_1, v_2$.
This will provide a proof
that equation \eqref{NSE} on the attractor $\mathcal{A}$ is transformed into an
ordinary differential equation in the
space $X$ defined in \eqref{2.5c}.

Let $h\in Q_NH$, $v$ be an element of $X$.  We will show that
\begin{equation}\label{NSEw}
\frac{d}{ds}w + \nu Aw+Q_NB(\varphi v+w,\varphi v+w) = h\;.
\end{equation}
has a unique bounded solution
$w(s)\in Y$. 
Consider a Galerkin approximation for $n>N$, of equation~\eqref{NSEw}, namely
\begin{equation}\label{NSEwG}
\begin{aligned}
& \frac{d}{ds}w_n + \nu Aw_n +P_nQ_NB(\varphi{}v+w_n,\varphi{}v+w_n) = P_nh \\ & w_n(s)\equiv P_n w_n(s)\text{ for }s\geq s_0,
\end{aligned}
\end{equation}
with the initial condition $w_n(s_0)=0$.  Since \eqref{NSEwG} is an ordinary
differential equation, it has a unique solution $w_n(s)$ for $s$ in some maximal interval
$[s_0,s_1)$, on which $\|w(s)\| < \infty$.

Taking the scalar product of \eqref{NSEwG} with $Aw_n$ and using relation~\eqref{moveu} we obtain:

\begin{equation}\label{1.1.1}
\frac{1}{2}\frac{d}{ds}\|w_n\|^2 + \nu |Aw_n|^2 +\varphi^2(B(v,v),Aw_n)-\varphi(B(w_n,w_n),Av) =(h,Aw_n)\;.
\end{equation}
Note that it follows from the definition of $\varphi$ that
\begin{equation}\label{3.0}
\left[\varphi\left(\frac{\|v\|}{\nu\kappa_0}\right)\right]^{\alpha}
\|v\|\leq{}3\nu\kappa_0G\quad\forall\ {v}\in{X}\;,
\ \forall\ \alpha >0\;.
\end{equation}
Using \eqref{agmon_1} and \eqref{3.0}, we have
\begin{equation*}
\begin{aligned}
\varphi^2|(B(v,v),Aw_n)| \leq \varphi^2c_T\|v\|^2|Aw_n|(\ln{eN})^{1/2}
\leq{}9c_T\nu^2\kappa_0^2G^2(\ln{eN})^{1/2}|Aw_n|\;,
\end{aligned}
\end{equation*}
while using \eqref{Lad}, $\|w_n\|\leq{}(1/\kappa_0N)|Aw_n|$, $|A^{3/2}v|\leq(\k0{}N)^2\|v\|$ and \eqref{3.0}, we get
\begin{equation*}
\begin{aligned}
&\varphi|(B(w_n,w_n),Av)|=\varphi|(B(w_n,Av),w_n))|\leq \csubL |w_n|\|w_n\||A^{3/2}v|\leq \\&
3\csubL\nu {G \over N} |Aw_n|^2.
\end{aligned}
\end{equation*}
Thus
\begin{equation}\label{bound_Aw}
\begin{aligned}
\frac{1}{2}&\frac{d}{ds}\|w_n\|^2 + \nu |Aw_n|^2 \\
&\leq |h||Aw_n|+\varphi^2|(B(v,v),Aw_n)|+\varphi|(B(w_n,w_n),Av)| \\
&\le 3\csubL\nu\frac{G}{N}|Aw_n|^2+\left[\frac{|h|}{|f|}\nu^2\kappa_0^2G+9c_T\nu^2\kappa_0^2G^2(\ln{eN})^{1/2}\right]|Aw_n|\;.
\end{aligned}
\end{equation}
Now we have by \ref{item2} of Lemma~\ref{useit} (with
$\varepsilon=1/(\kappa_0N)$), that if $N$ and $G$ are such that
\begin{equation}\label{gn_condition1}
N>3\csubL{}G,
\end{equation}
then
\begin{equation}\label{bound_w_n}
\begin{aligned}
\|w(s)\|\leq{}\nu\kappa_0\frac{9c_TG^2(\ln{eN})^{1/2}+\frac{|h|}{|f|}G}{N-3\csubL{}G}.
\end{aligned}
\end{equation}
In the following we assume that \eqref{gn_condition1} is satisfied, and
introduce the notation
\begin{equation}\label{notation_bound_w}
 \beta(G,N)=\frac{9c_TG(\ln{eN})^{1/2}+\frac{|h|}{|f|}}{N-3\csubL{}G}\;.
\end{equation}
Thus for the solution $w_n(s)$ of \eqref{NSEwG} with initial condition $w_n(s_0)=0$
\begin{equation}\label{general_bound_n}
\|w_n(s)\|\leq{}\nu\kappa_0{G}\beta(G,N)\;.
\end{equation}

It follows from the general theory of ordinary differential equations that
$w_n$ can be extended to the solution of \eqref{NSEwG} defined on
$[s_0,\infty)$.
Therefore the classical proofs of existence for solutions of equation~\eqref{NSE} (see e.g. \cite{CF88} Ch.9 and Lemmas 8.2 and 8.4 in Ch.8) apply almost verbatim to \ref{NSEw}. It follows that \eqref{NSEw} has a unique solution on $[s_0,\infty)$ satisfying $w(s_0)=0$. Now denoting by $w^{(n)}(\cdot)$ the solution of \eqref{NSEw} on $[-n,\infty)$ satisfying $w^{(n)}(-n)=0$, we have
\begin{equation}\label{bound_for_w(n)}
\|w^{(n)}(s)\|\leq \kappa_0\nu{G}\beta(G,N)\text{ for all }s\in [-n,\infty).
\end{equation}
It is also easy to show using \eqref{bound_Aw} that for any fixed, bounded interval
$[s_0,s_1]$ we have
\begin{equation}\label{bound_Aw(k)}
\sup_{n}\int^{s_1}_{s_0}|Aw^{(n)}(s)|^2 ds<\infty.
\end{equation}
It is well known how to prove that there exists a subsequence $\{n_j\}^\infty_{j=1}$ such that $w^{(n_j)}$ converges to a solution $w(s)$ of \eqref{NSEw} in $L^2([s_0,s_1];V)$ satisfying
\begin{equation}\label{bound_w_3}
\|w(s)\|\leq{}\kappa_0\nu{G}\beta(G,N)\text{ for all }s\in[s_0,s_1]\;,
\end{equation}
and moreover, there exists $C=C(G)$, such that
for $s_1-s_0 <1$,
\begin{equation}\label{intAw}
\int_{s_1}^{s_2}|Aw(s)|^2\leq{}C\nu\k0^2\nu{G} \quad \text{for all } s_1, s_2
\text{ such that }s_1-s_0 <1\;,
\end{equation}
(see again \cite{CF88}).

We note that if $w(s)$ is any bounded solution of \eqref{NSEw}, defined on $(-\infty,s_1]$
then in exactly the same way we get the analog of \eqref{bound_Aw} for $w$, from which
by \ref{item3} of Lemma~\ref{useit} we get the bound
\begin{equation}\label{general_bound}
\|w(s)\|\leq{}\nu\kappa_0{G}\beta(G,N)\quad{}\text{ for all }s\in(-\infty,s_1].
\end{equation}

We now prove that this solution, which is bounded in the $\|\cdot\|$ norm, is unique for sufficiently large $N$.
Let $w_1,w_2$ be two bounded solutions of \eqref{NSEw}. Denote $w=w_1-w_2$. Then

\begin{equation*}
\begin{aligned}
\frac{d}{ds}w + \nu Aw +Q_NB(\varphi{}v+w_1,\varphi{}v+w_1)-Q_NB(\varphi{}v+w_2,\varphi{}v+w_2) = 0.
\end{aligned}
\end{equation*}
Taking the scalar product of the above equation with $w$ we get
\begin{equation*}
\begin{aligned}
\frac{1}{2}\frac{d}{ds}|w|^2 &+ \nu \|w\|^2 \\
&= -
(B(\varphi{}v+w_1,\varphi{}v+w_1),w)+(B(\varphi{}v+w_2,\varphi{}v+w_2),w)\\
&= - (B(w,\varphi{}v+w_1),w)- (B(\varphi{}v+w_2,w),w)=-(B(w,\varphi{}v+w_1),w).
\end{aligned}
\end{equation*}
We apply \eqref{Lad}, \eqref{3.0}, and \eqref{bound_w_3} to obtain
\begin{equation*}
|(B(w,\varphi{v}+w_1),w)|\leq{\csubL}|w|\|w\|(\varphi\|v\|+\|w_1\|)\leq{G}(3+\beta(G,N))\csubL\kappa_0\nu|w|\|w\|
\end{equation*}
so that
\begin{equation}
\frac{1}{2}\frac{d}{ds}|w|^2 + \nu \|w\|^2 \leq G(3+\beta(G,N))\csubL\kappa_0\nu\|w\| |w|.
\end{equation}
Now we use
\begin{equation}\label{3.19.5}
|w|\leq \frac{1}{\kappa_0 N}\|w\|
\end{equation}
 to get
\begin{equation}
\frac{d}{ds}|w|^2 + 2\kappa_0^2\nu{N^2}(1-\csubL{}G\frac{3+\beta(G,N)}{N}) |w|^2 \leq 0.
\end{equation}

Since $|w|$ is bounded on $(-\infty,\infty)$, we find that $w=0$, when
\begin{equation}\label{gn_condition2}
N>\csubL{}G(3+\beta(G,N))\;.
\end{equation}

We next show that $w(s)$ is continuous in $s$.   Integrating \eqref{NSEw}, we have
$$
|w(s_2)-w(s_1)|=|\int_{s_1}^{s_2} h-Q_NB(\varphi v(s)+w(s))-\nu Aw(s) \ ds |\;.
$$
Using \eqref{agmon}, \eqref{agmon_1}, \eqref{A46b}, together with \eqref{3.0} and \eqref{bound_w_3},
we find that
\begin{align*}
|B(\varphi v)| \le 9\csubA (\nu\k0)^2G^2\;, \quad &|B(\varphi v,w)| \le 3c_B (\nu\k0)^2G^2\beta(G,N)\;, \\
 |B(w,\varphi v)| \le 3c_T (\nu\k0)^2G^2\beta(G,N) \;,
 \quad & |B(w)|  \le {\csubA \k0 \over N^{1/2}} (\nu G\beta(G,N))^{3/2}|Aw|^{1/2}
\end{align*}
It follows that (for $0 < s_2-s_1 < 1$) we have
\begin{align*}
|w(s_2)-w(s_1)|\le &\left\{|h| + \left[(9\csubA+3\beta(G,N)(c_B+c_T)G^2+C(G)\right] (\nu\k0)^2\right\} (s_2-s_1) \\
 & + {\csubA\over N^{1/2}}\k0^{3/2}\nu^{7/4}(G\beta(G,N))^{3/2}[C(G)]^{1/4}(s_2-s_1)^{3/4}\;.
\end{align*}
We work with both the $|\cdot|$- and $\|\cdot \|$-norms to allow for $h$ (hence $f$) to be in $H$, rather
than in $V$.

Note that \eqref{gn_condition2} implies \eqref{gn_condition1}. This concludes the proof of the following lemma.

\begin{lemma}\label{3.1}
Assume that $N$ satisfies \eqref{gn_condition2}, where $\beta(G,N)$
is defined in \eqref{notation_bound_w}. Then the equation \eqref{NSEw} has a unique weak solution defined on the whole real
line, which is bounded in the $\|\cdot\|$ norm. Moreover this weak solution is
in $Y$ with
\begin{equation}
\label{bound_w_X}
\|w(s)\|\leq{}\nu\kappa_0G\beta(G,N)\quad \forall \ s \in \bR \;, \quad \text{and} \quad
|w|_Y\leq{}\nu G{\beta(G,N) \over N}.
\end{equation}
\end{lemma}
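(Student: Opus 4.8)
The plan is to construct the bounded whole-line solution of \eqref{NSEw} by a Galerkin/compactness argument, to extract the $\|\cdot\|$-bound from an enstrophy-type estimate engineered to fit Lemma~\ref{useit}, and then to obtain uniqueness and continuity from two separate energy estimates. Equation \eqref{NSEw} is non-autonomous, with the prescribed data $v\in X$ and $h\in Q_NH$ playing the role of (time-dependent) forcing, so I would first solve the finite-dimensional Galerkin system \eqref{NSEwG} with $w_n(s_0)=0$ on its maximal interval of existence, derive an a priori bound that is independent of $n$ and of the interval, and thereby promote local existence to global existence on $[s_0,\infty)$. The whole-line solution is then produced by solving on $[-n,\infty)$ with data $w^{(n)}(-n)=0$ and extracting a subsequence that converges (weakly in $L^2_{\mathrm{loc}}(\bR;V)$, strongly in $L^2_{\mathrm{loc}}(\bR;H)$, which suffices to pass to the limit in the quadratic term) to a solution $w$ defined for all $s\in\bR$.

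The crux is the a priori estimate, so I would pair \eqref{NSEwG} with $Aw_n$. The dangerous contributions are the trilinear terms carrying the high-order factor $Aw_n$: the purely cubic term $(B(w_n,w_n),Aw_n)$ vanishes by \eqref{ortho}, and the two mixed terms $(B(v,w_n),Aw_n)+(B(w_n,v),Aw_n)$ can be rewritten, via the strengthened enstrophy identity \eqref{moveu}, as $-(B(w_n,w_n),Av)$, so that the top-order derivative now falls on the cut-off factor $v$ rather than on $w_n$. This is exactly the rearrangement that makes the estimate close: combining the logarithmic inequality \eqref{agmon_1} and \eqref{Lad} with the cut-off bound \eqref{3.0} (which forces $\varphi^\alpha\|v\|\le 3\nu\kappa_0 G$), one controls the remaining terms by a multiple of $|Aw_n|$ together with a remainder $3\csubL\nu(G/N)|Aw_n|^2$. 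Writing $x=\|w_n\|$, $y=|Aw_n|$, and using the inverse Poincar\'e inequality $\|w_n\|\le(\kappa_0 N)^{-1}|Aw_n|$ (i.e. $\varepsilon=1/(\kappa_0 N)$), the differential inequality meets the hypotheses of Lemma~\ref{useit} once the $|Aw_n|^2$ remainder is absorbed into the dissipation, which requires \eqref{gn_condition1}, $N>3\csubL G$. Part~\ref{item2} of Lemma~\ref{useit} then yields the uniform bound $\|w_n\|\le\nu\kappa_0 G\beta(G,N)$ with $\beta$ as in \eqref{notation_bound_w}, and part~\ref{item3} gives the same bound \eqref{general_bound} for any solution merely bounded on $(-\infty,s_1]$, using boundedness in place of an initial condition.

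For uniqueness I would take two bounded solutions $w_1,w_2$, set $w=w_1-w_2$, and pair the difference equation with $w$ in $H$ (not with $Aw$). Using the antisymmetry of $B$ and the already-established bound \eqref{bound_w_3} together with \eqref{3.0} to estimate the single surviving trilinear term by $\csubL G(3+\beta(G,N))\kappa_0\nu\,|w|\,\|w\|$, and invoking $|w|\le(\kappa_0 N)^{-1}\|w\|$ from \eqref{3.19.5}, I obtain $\frac{d}{ds}|w|^2 + 2\kappa_0^2\nu N^2(1-\csubL G(3+\beta(G,N))/N)|w|^2\le 0$; since $|w|$ is bounded on all of $\bR$, letting the initial time tend to $-\infty$ forces $w\equiv 0$ under the sharper condition \eqref{gn_condition2}, which also implies \eqref{gn_condition1}. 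Continuity of $w$ in $s$ then follows by integrating \eqref{NSEw} over $[s_1,s_2]$ and bounding each term (via \eqref{agmon}, \eqref{agmon_1}, \eqref{A46b}, \eqref{3.0}, and \eqref{bound_w_3}), which gives a H\"older-type modulus of continuity and hence $w\in Y=C_b(\bR,Q_NH)$; the bound on $|w|_Y$ in \eqref{bound_w_X} is immediate from $\|w\|\le\nu\kappa_0 G\beta(G,N)$ and $|w|\le(\kappa_0 N)^{-1}\|w\|$.

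The step I expect to be the main obstacle is the a priori estimate, specifically the interplay of the enstrophy identity \eqref{moveu} with the right choice of interpolation/logarithmic inequalities so that the resulting differential inequality takes the precise form $\frac{d}{ds}x^2\le -ay^2+by$ with $x\le\varepsilon y$ demanded by Lemma~\ref{useit}. The delicate points are ensuring the $|Aw_n|^2$ remainder is genuinely subordinate to the viscous term (whence \eqref{gn_condition1}) and, for the whole-line solution, replacing the usual reliance on initial data by the boundedness hypothesis so that part~\ref{item3} applies. The need to accommodate $h\in H$ rather than $h\in V$ in the continuity estimate, which forces one to track both the $|\cdot|$- and $\|\cdot\|$-norms simultaneously, is a secondary technical nuisance rather than a genuine obstruction.
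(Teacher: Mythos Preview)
Your proposal is correct and follows essentially the same route as the paper: the Galerkin approximation with zero data at $s_0=-n$, the enstrophy estimate paired with $Aw_n$ using \eqref{moveu} to convert the mixed terms into $-\varphi(B(w_n,w_n),Av)$, the inequalities \eqref{agmon_1}, \eqref{Lad}, \eqref{3.0} to reach the form required by Lemma~\ref{useit}, and then the $H$-pairing for uniqueness and the integrated form for continuity are exactly what the paper does. Even the subsidiary technical points you flag (using part~\ref{item3} of Lemma~\ref{useit} for the whole-line bound, and working in the $|\cdot|$-norm for continuity so that only $h\in H$ is needed) match the paper's treatment.
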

This lemma allows us to define the (nonlinear) operator $W$ which takes $v\in X$ to the unique solution of the corresponding equation \eqref{NSEw}. By Lemma~\ref{3.1} we have that
\begin{equation}\label{commute_translation}
W(s;\tau_t(v(\cdot)))=\tau_t W(s;v(\cdot)).
\end{equation}

It is also useful to have a bound for $|Aw|$ if $\|h\|$ is finite. Consider
the following a priori estimates of $Aw$. Taking the scalar
product of \eqref{NSEw} with $A^2w$ we have

\begin{equation}\label{1.1.1.1}
\begin{aligned}
\frac{1}{2}\frac{d}{ds}|Aw|^2 + \nu |A^{3/2}w|^2
+((B(\varphi v+w),A^2w) =(h,A^2w) \le \|h\| |A^{3/2}w|.
\end{aligned}
\end{equation}
Using \eqref{1.8.5}, expanding, then applying  \eqref{flip} (three times) and \eqref{1.7.5},
 we obtain
\begin{equation*}
\begin{aligned}
((B&(\varphi v+w,\varphi v+w),A^2w)\\
&=\varphi^2 (B(v,Av),Aw) +\varphi(B(v,Aw),Aw)
                                                                    + \varphi(B(w,Av),Aw)+(B(w,Aw),Aw)\\
& -\varphi^2 (B(Av,v),Aw) -\varphi(B(Av,w),Aw)
                                                                    - \varphi(B(Aw,v),Aw)-(B(Aw,w),Aw)\\
 &=-\varphi^2 (B(v,Aw),Av)
                                                          - \varphi(B(w,Aw),Av)\\
& -\varphi^2 (B(Aw,v),Av) -\varphi(B(Av,w),Aw)
                                                                    - \varphi(B(Aw,v),Aw)-(B(Aw,w),Aw)\\
\end{aligned}
\end{equation*}
so that by applying for the first term \eqref{titi} and for the rest of terms \eqref{Lad}, followed by \eqref{3.0} and \eqref{bound_w_X}, we have
\begin{equation*}
\begin{aligned}
|((B&(\varphi v+w,\varphi v+w),A^2w)|\\
  \le &\ c_T\varphi^2\|v\| |A^{3/2}w| |Av| (\ln eN)^{1/2} + \csubL \varphi |w|^{1/2} \|w\|^{1/2} |A^{3/2}w| |Av|^{1/2} |A^{3/2}v|^{1/2} \\
 & + \csubL\varphi^2 |Aw|^{1/2}|A^{3/2}w|^{1/2}\|v\| |Av|^{1/2}|A^{3/2}v|^{1/2} \\ & + \csubL \varphi |Av|^{1/2}|A^{3/2}v|^{1/2}\|w\||Aw|^{1/2}|A^{3/2}w|^{1/2} \\
 & + \csubL{}\varphi|Aw||A^{3/2}w|\|v\| + \csubL{} |A^{3/2} w| |Aw| \|w\| \\
 \le & \ c_T(\varphi \|v\|)^2 \k0 N (\ln eN)^{1/2} |A^{3/2}w|
 + \csubL{}\varphi \|v\| { |A^{3/2}w|^2 \over \k0 N}\\
 & + \csubL{}(\varphi \|v\|)^2 \k0 N |A^{3/2}w|
    + \csubL{} \varphi \|v\| {|A^{3/2}w|^2 \over \k0 N} \\
 & + \csubL{} \varphi \|v\| {|A^{3/2}w|^2 \over \k0 N}
 + \csubL{} \k0 \nu \beta(G,N) G {|A^{3/2}w|^2 \over \k0 N} \\
 \le & \csubL{}[9+\beta(G,N)]\nu {G   \over N} |A^{3/2} w|^2
    +9\nu^2\k0^3G^2N(c_T(\ln eN)^{1/2}+\csubL{})|A^{3/2}w| \;.
    \end{aligned}
\end{equation*}
Thus if
\begin{equation}
\label{bound_A_N}
N> \csubL{}G(9+\beta(G,N)) \;,
\end{equation}
we can apply Lemma \ref{useit} (iii) to obtain
\begin{equation}\label{Aw_bound0}
|Aw|\leq\frac{\|h\|+9\nu^2\kappa_0^3G^2N(c_T(\ln{eN})^{1/2}+\csubL{})}
{\nu\k0\left\{N-\csubL{}G(9+\beta(G,N))\right\}}.
\end{equation}

Using the Galerkin approximation we get
the bound \eqref{Aw_bound0} for $w_n$ , provided \eqref{bound_A_N} holds.
Therefore, $w_n\in{}L^\infty(0,T;D(A))$.
From \eqref{1.1.1.1} we obtain that each $w_n$ belongs to a bounded set of $L^2(0,T;D(A^{3/2}))$ for
any $T$.  Therefore as usual choosing a subsequence that converges weakly in
$L^2$ and weakly$^*$ in $L^\infty$, and taking the limit as $n \to \infty$, we obtain the solution of
\eqref{NSEw}. Note that \eqref{bound_A_N} implies \eqref{gn_condition2}. Therefore the following lemma is true,
\begin{lemma}
\label{}
Suppose $h\in D(A^{1/2})$. If $G$ and $N$ satisfy \eqref{bound_A_N}, then the solution to \eqref{NSEw} satisfies
\begin{equation}\label{Aw_bound}
|Aw|\leq\frac{\|h\|+9\nu^2\kappa_0^3G^2N(c_T(\ln{eN})^{1/2}+\csubL{})}
{\nu\k0\left\{N-\csubL{}G(9+\beta(G,N))\right\}}.
\end{equation}
\end{lemma}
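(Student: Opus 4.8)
The plan is to carry out the estimate entirely at the level of the Galerkin approximations $w_n$ solving \eqref{NSEwG}, where every norm below is finite and all differentiations are legitimate, obtain the bound \eqref{Aw_bound0} uniformly in $n$, and then pass to the limit exactly as in the construction of the solution in Lemma~\ref{3.1}. To reach the $|A\cdot|$ norm I would take the scalar product of (the $n$-th truncation of) \eqref{NSEw} with $A^2 w_n$. Since $A$ is self-adjoint, the forcing term is handled by $(h,A^2 w_n)=(A^{1/2}h,A^{3/2}w_n)\le \|h\|\,|A^{3/2}w_n|$, which is where the hypothesis $h\in D(A^{1/2})$ is used; this produces the enstrophy-type identity \eqref{1.1.1.1}, in which the only genuinely nontrivial object is the cubic term $(B(\varphi v+w_n,\varphi v+w_n),A^2 w_n)$.

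The heart of the argument is to reorganize and bound this cubic term. I would expand it bilinearly and then use the strengthened enstrophy identities \eqref{1.8.5}, \eqref{flip} and \eqref{1.7.5} to shift one power of $A$ off the factor $A^2 w_n$, so that each surviving summand carries the Stokes operator raised to a power at most $3/2$ on $w_n$. To the reorganized sum I would apply the logarithmic inequality \eqref{titi} to the leading term $\varphi^2(B(v,Av),Aw_n)$ and the Ladyzhenskaya-type inequality \eqref{Lad} to the remaining terms, then insert the uniform a priori data: the cutoff bound \eqref{3.0}, which gives $\varphi\|v\|\le 3\nu\kappa_0 G$ and so controls every factor carrying $v$, the enstrophy bound \eqref{bound_w_X} on $\|w_n\|$, and the spectral relations $\|w_n\|\le (\kappa_0 N)^{-1}|Aw_n|$ and $|A^{3/2}v|\le(\kappa_0 N)^2\|v\|$ coming from $w_n\in Q_NH$, $v\in P_NH$. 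The outcome is
\begin{equation*}
|(B(\varphi v+w_n,\varphi v+w_n),A^2 w_n)|\le \csubL[9+\beta(G,N)]\,\nu\frac{G}{N}\,|A^{3/2}w_n|^2+\gamma\,|A^{3/2}w_n|,
\end{equation*}
with $\gamma=9\nu^2\kappa_0^3 G^2 N\big(c_T(\ln eN)^{1/2}+\csubL\big)$.

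Inserting this into \eqref{1.1.1.1} yields the differential inequality
\begin{equation*}
\frac{1}{2}\frac{d}{ds}|Aw_n|^2+\nu\Big(1-\csubL[9+\beta(G,N)]\tfrac{G}{N}\Big)|A^{3/2}w_n|^2\le(\|h\|+\gamma)\,|A^{3/2}w_n|,
\end{equation*}
whose coefficient of $|A^{3/2}w_n|^2$ is strictly positive precisely under the hypothesis \eqref{bound_A_N}. I would then apply Lemma~\ref{useit}(iii) with $x=|Aw_n|$, $y=|A^{3/2}w_n|$ and $\varepsilon=1/(\kappa_0 N)$, the relation $x\le\varepsilon y$ being the spectral bound $|Aw_n|\le(\kappa_0 N)^{-1}|A^{3/2}w_n|$; this gives exactly \eqref{Aw_bound0} for each $w_n$. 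Passing to the limit along the weakly-$L^2$, weakly-$*$-$L^\infty$ convergent subsequence of Lemma~\ref{3.1} preserves the bound by lower semicontinuity and delivers \eqref{Aw_bound}; note that \eqref{bound_A_N} implies \eqref{gn_condition2}, so the limit is the unique bounded solution.

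The step I expect to be the main obstacle is the algebraic reorganization of the cubic term: the identities \eqref{1.8.5}, \eqref{flip}, \eqref{1.7.5} must be applied in the right order so that every surviving term places at most $A^{3/2}$ on $w_n$, and the subsequent bookkeeping must be tight enough that the coefficient multiplying $|A^{3/2}w_n|^2$ is exactly $\csubL[9+\beta(G,N)]\nu G/N$ — any looser accounting would spoil the sharp threshold \eqref{bound_A_N}. A secondary point is the applicability of Lemma~\ref{useit}(iii), i.e.\ the a priori boundedness of $|Aw_n|$ on the relevant interval; this is automatic at the Galerkin level, since $w_n$ lives in a finite-dimensional space, which is precisely why the whole estimate is first carried out for $w_n$ before taking $n\to\infty$.
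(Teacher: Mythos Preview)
Your proposal is correct and follows essentially the same route as the paper: take the scalar product with $A^2 w$, reorganize the cubic term via \eqref{1.8.5}, \eqref{flip}, \eqref{1.7.5} so that at most $A^{3/2}$ lands on $w$, estimate with \eqref{titi} on the leading $v$-$v$ term and \eqref{Lad} on the rest, invoke \eqref{3.0} and \eqref{bound_w_X}, and finish with Lemma~\ref{useit} and a Galerkin passage to the limit. One small remark: the term you call $\varphi^2(B(v,Av),Aw_n)$ is, after the flip \eqref{flip}, actually estimated as $\varphi^2(B(v,Aw_n),Av)$, which is the form to which \eqref{titi} applies with the logarithm on the $Av$ factor; and at the Galerkin level (with $w_n(s_0)=0$) it is part~(ii) rather than~(iii) of Lemma~\ref{useit} that gives the bound, with~(iii) recovered in the limit $s_0\to-\infty$.
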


We note that the analog of the results in this section with $w \in C_b((-\infty,T],Q_NH)$ instead
of $C_b(\bR,Q_NH)$ in this section remains valid for $v\in C_b((-\infty,T],P_NH)$.

\section{The determining form}\label{detformsec}
\subsection{Lipschitz property of $W$}

Let $v_1,v_2$ be in $X$, and $w_1=W(v_1),w_2=W(v_2)$. Denote $\gamma=w_1-w_2$, $\delta=v_1-v_2$, $\tilde{v}_i=\varphi\left(\frac{\|v_i\|}{\nu\kappa_0}\right)v_i$, $i=1,2$, $\tilde{\delta}=\tilde{v}_1-\tilde{v}_2$.
Subtracting equation \eqref{NSEw} for $w_2$ from that for $w_1$, we obtain
\begin{equation}
\frac{d}{ds}\gamma+\nu A\gamma+Q_NB(\tilde{\delta}+\gamma,\tilde{v}_1+w_1)+Q_NB(\tilde{v}_2+w_2,\tilde{\delta}+\gamma)=0.
\end{equation}
Taking the scalar product with $A\gamma$, we have
\begin{equation}
\begin{aligned}
\frac{1}{2} &\frac{d}{ds}\|\gamma\|^2+\nu |A\gamma|^2 \leq |(B(\tilde{\delta},\tilde{v}_1+w_1),A\gamma)|+|B(\tilde{v}_2+w_2,\tilde{\delta}),A\gamma)|\\ &+|(B(\gamma,\tilde{v}_1),A\gamma)|+|B(\tilde{v}_2,\gamma),A\gamma)|+|(B(\gamma,w_1),A\gamma)|+|B(w_2,\gamma),A\gamma)|.
\end{aligned}
\end{equation}
To estimate the right hand side we apply \eqref{agmon} to the first
term, \eqref{agmon_1} to the second, \eqref{A46a} to the third term, \eqref{flip}, \eqref{1.7.5}, and \eqref{A46c} to the fourth term, and \eqref{A46b}, \eqref{A46a}
respectively to the fifth and sixth term. Using then \eqref{3.0},
\eqref{bound_w_3} and \eqref{3.19.5}, we have
\begin{equation}
\begin{aligned}
\frac{1}{2}\frac{d}{ds}\|\gamma\|^2&+\nu |A\gamma|^2 \leq{}G(3+\beta(G,N))\kappa_0\nu(c_T+c_B)\|\tilde{\delta}\||A\gamma|(\ln{eN})^{1/2}\\
&+3\nu(\csubA+\csubL{})|A\gamma|^{2}\frac{G}{N}+\nu(\csubA+\csubL{})\frac{G\beta(G,N)}{N}|A\gamma|^2,
\end{aligned}
\end{equation}
and thus by Lemma~\ref{useit1} \ref{item21} we have that
\begin{equation}\label{lipsh2.5}
\|\gamma(s)\|\leq\frac{G(3+\beta(G,N))(\ln{eN})^{1/2}(c_T+c_B)}{N-G(3+\beta(G,N))(\csubA+\csubL{})}\sup_{s'\leq{s}}\|\tilde{\delta}(s')\|
\end{equation}
for $G,N$ such that
\begin{equation}\label{gn_condition3}
N>G(3+\beta(G,N))(\csubA+\csubL{}) \;.
\end{equation}

From now on we will consider $G$ and $N$ such that

\begin{equation}\label{gn_condition4}
N>G(9+\beta(G,N))(\csubA+\csubL{})
\end{equation}
which implies \eqref{gn_condition1}, \eqref{gn_condition2}, \eqref{gn_condition3} and \eqref{bound_A_N}.

We derive from \eqref{lipsh2.5} the following two corollaries
\begin{corollary}\label{start_v}
Suppose that $G$ and $N$ satisfy \eqref{gn_condition4}.
Then if $v_1,v_2\in{X}$, $v_1(s)=v_2(s)$ for all
$s\leq{s_0}$ we have that $W(s,v_1)=W(s,v_2)$ for all $s\leq{s_0}$.
\end{corollary}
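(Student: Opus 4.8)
The plan is to deduce the statement directly from the causal Lipschitz estimate \eqref{lipsh2.5}, exploiting the fact that its right-hand side involves the one-sided supremum $\sup_{s'\le s}\|\tilde\delta(s')\|$ rather than a global one. First I would record that the standing hypothesis \eqref{gn_condition4} implies \eqref{gn_condition3}, so that \eqref{lipsh2.5} is indeed available for the pair $v_1,v_2$, with $\gamma=W(v_1)-W(v_2)$ and $\tilde\delta=\tilde{v}_1-\tilde{v}_2$, $\tilde{v}_i=\varphi(\|v_i\|/(\nu\kappa_0))v_i$.

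The heart of the argument is the observation that the cutoff $\varphi$ acts locally in the variable $s$: for each fixed $s$ the value $\tilde{v}_i(s)=\varphi(\|v_i(s)\|/(\nu\kappa_0))\,v_i(s)$ depends on $v_i$ only through $v_i(s)$. Consequently, if $v_1(s)=v_2(s)$ for all $s\le s_0$, then $\|v_1(s)\|=\|v_2(s)\|$ on that half-line, the scalar factor $\varphi$ is identical for the two, and $\tilde{v}_1(s)=\tilde{v}_2(s)$; that is, $\tilde\delta(s)=0$ for every $s\le s_0$. I would then fix an arbitrary $s\le s_0$ and note that $\sup_{s'\le s}\|\tilde\delta(s')\|=0$, whence \eqref{lipsh2.5} yields $\|\gamma(s)\|=0$, i.e.\ $\gamma(s)=0$. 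Since $s\le s_0$ was arbitrary, this gives $W(s;v_1)=W(s;v_2)$ for all $s\le s_0$, as claimed.

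The only point I would flag as requiring care — rather than a genuine obstacle — is precisely this locality of $\varphi$ in $s$, together with the one-sided form of \eqref{lipsh2.5}. Both are essential: it is because $\tilde\delta$ vanishes on the whole half-line $(-\infty,s_0]$, and because the estimate controls $\|\gamma(s)\|$ by the supremum of $\|\tilde\delta\|$ over the past $\{s'\le s\}$ only, that the right-hand side is zero for each $s\le s_0$. (Had $\varphi$ been built from the global norm $\|v_i\|_X=\sup_s\|v_i(s)\|$, the two scalar factors could differ when $v_1,v_2$ disagree for $s>s_0$, and $\tilde\delta$ would not vanish on $(-\infty,s_0]$; the pointwise reading of $\varphi$ is what makes the conclusion hold.) Everything beyond this is a direct substitution into \eqref{lipsh2.5}.
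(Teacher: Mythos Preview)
Your proposal is correct and follows essentially the same approach as the paper: both deduce $\gamma(s)=0$ for $s\le s_0$ directly from \eqref{lipsh2.5} by observing that $\tilde\delta(\sigma)=0$ for all $\sigma\le s$. Your version is more explicit about why $\tilde\delta$ vanishes on $(-\infty,s_0]$ (the pointwise-in-$s$ nature of the cutoff $\varphi$) and about the role of the one-sided supremum, but the underlying argument is identical.
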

\begin{proof}
Indeed, if $s\leq{s_0}$, it follows that $\tilde{\delta}(\sigma)=0$ for all
$\sigma\leq{s}$, thus \eqref{lipsh2.5} gives that $\gamma(s)=0$.
\end{proof}
\begin{corollary}
\label{w_lipschitz}
Suppose that $G$ and $N$ satisfy \eqref{gn_condition4}. Then
$W:X\rightarrow{Y}$
 is a Lipschitz function with Lipschitz
constant $L_W$ satisfying
\begin{equation}
\label{w_lipschitz1}
L_W\leq{}4\frac{G(3+\beta(G,N))(\ln{eN})^{1/2}(c_T+c_B)}{N-G(3+\beta(G,N))(\csubA+\csubL{})} \;.
\end{equation}			
\end{corollary}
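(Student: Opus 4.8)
The plan is to build directly on the already-established estimate \eqref{lipsh2.5}, which controls $\|\gamma(s)\|=\|W(s;v_1)-W(s;v_2)\|$ by $\sup_{s'\le s}\|\tilde{\delta}(s')\|$, where $\tilde{\delta}=\tilde{v}_1-\tilde{v}_2$ and $\tilde{v}_i=\varphi(\|v_i\|/(\nu\kappa_0))\,v_i$. Two things remain: (a) replace the cut-off difference $\tilde{\delta}$ on the right-hand side by the genuine difference $\delta=v_1-v_2$, and (b) pass from the $V$-norm bound on $\gamma$ to the $Y$-norm $|\cdot|_Y=\sup_s|\cdot|$. Step (b) is routine, so the whole corollary hinges on step (a), i.e. on a global Lipschitz bound for the truncation map $v\mapsto\tilde{v}:=\varphi(\|v\|/(\nu\kappa_0))\,v$ on $V$.

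The key claim I would prove is that this truncation map is globally Lipschitz from $V$ into $V$ with constant $4$, namely $\|\tilde{v}_1-\tilde{v}_2\|\le 4\,\|v_1-v_2\|$ for all $v_1,v_2\in V$. I would establish it by the fundamental theorem of calculus along the segment $v(t)=v_2+t\,\delta$, $t\in[0,1]$. Since $\varphi$ is piecewise linear, the map $t\mapsto\tilde{v}(t)$ is absolutely continuous, and for a.e.\ $t$
\[
\frac{d}{dt}\tilde{v}(t)=\varphi'\!\Big(\tfrac{\|v(t)\|}{\nu\kappa_0}\Big)\frac{1}{\nu\kappa_0}\frac{d\|v(t)\|}{dt}\,v(t)+\varphi\!\Big(\tfrac{\|v(t)\|}{\nu\kappa_0}\Big)\delta.
\]
Using $|d\|v(t)\|/dt|\le\|\delta\|$ (Cauchy--Schwarz in $V$) gives the integrand bound $\big(|\varphi'|\,\|v(t)\|/(\nu\kappa_0)+\varphi\big)\|\delta\|$. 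The one genuine point is to tame the factor $|\varphi'|\,\|v(t)\|$: because $\varphi$ drops from $1$ to $0$ over an interval of length $G$, its slope satisfies $|\varphi'|\le 1/G$, and because $\varphi'$ is supported where $\|v\|/(\nu\kappa_0)\le 3G$, on that support $|\varphi'(\|v\|/(\nu\kappa_0))|\cdot\|v\|/(\nu\kappa_0)\le(1/G)(3G)=3$; together with $\varphi\le1$ this bounds the integrand by $(3+1)\|\delta\|=4\|\delta\|$, and integrating over $t\in[0,1]$ yields the claim. This cancellation of the potentially large $\|v(t)\|$ against the small slope $1/G$ is the main obstacle, and it is precisely what produces the factor $4$ in \eqref{w_lipschitz1}.

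To finish, taking the supremum in $s$ gives $\sup_{s'\le s}\|\tilde{\delta}(s')\|\le 4\sup_{s'}\|\delta(s')\|=4\|v_1-v_2\|_X$. Substituting into \eqref{lipsh2.5} and then using the Poincar\'e inequality $|\gamma(s)|\le\|\gamma(s)\|$ on $Q_NH$ (the modes there having wavenumber $\ge\kappa_0N$) gives
\[
|W(v_1)-W(v_2)|_Y=\sup_s|\gamma(s)|\le 4\,\frac{G(3+\beta(G,N))(\ln eN)^{1/2}(c_T+c_B)}{N-G(3+\beta(G,N))(\csubA+\csubL{})}\,\|v_1-v_2\|_X,
\]
which is exactly \eqref{w_lipschitz1}; condition \eqref{gn_condition4} (which strengthens \eqref{gn_condition3}) guarantees the denominator is positive, so $L_W$ is finite. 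Thus the only nontrivial input beyond \eqref{lipsh2.5} is the factor-$4$ truncation estimate, and everything else is bookkeeping.
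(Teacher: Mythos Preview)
Your argument is correct and reaches the same conclusion as the paper, but the route to the key inequality $\|\tilde{\delta}\|\le 4\|\delta\|$ is different. You integrate along the segment $v(t)=v_2+t\delta$ and bound $\|\tfrac{d}{dt}\tilde v(t)\|$ pointwise, using that $|\varphi'|\le 1/G$ is supported where $\|v\|/(\nu\kappa_0)\le 3G$ to get the factor $3+1=4$. The paper instead argues purely algebraically: if both $\|v_i\|\ge 3\nu\kappa_0 G$ then $\tilde\delta=0$, and otherwise (say $\|v_1\|\le 3\nu\kappa_0 G$) one writes
\[
\tilde v_1-\tilde v_2=\Big[\varphi\big(\tfrac{\|v_1\|}{\nu\kappa_0}\big)-\varphi\big(\tfrac{\|v_2\|}{\nu\kappa_0}\big)\Big]v_1+\varphi\big(\tfrac{\|v_2\|}{\nu\kappa_0}\big)(v_1-v_2),
\]
and bounds the two pieces by $\frac{1}{\nu\kappa_0 G}\big|\|v_1\|-\|v_2\|\big|\cdot 3\nu\kappa_0 G\le 3\|\delta\|$ and $\|\delta\|$ respectively. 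The paper's telescoping argument is shorter and avoids any differentiability considerations (no need to worry about absolute continuity or the corner points of $\varphi$), while your approach makes the origin of the constant $4=3+1$ perhaps more transparent as a uniform bound on the derivative of the truncation map. One small quibble: your Poincar\'e step ``$|\gamma(s)|\le\|\gamma(s)\|$'' is missing a factor of $\kappa_0^{-1}$ (or better, $(\kappa_0 N)^{-1}$ on $Q_NH$); the paper sidesteps this by effectively using $\sup_s\|\gamma(s)\|$ as the Lipschitz norm, which is how $L_W$ is actually applied downstream in the proof of Theorem~\ref{ODE}.
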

\begin{proof}
If we have both $\|v_1\| \ge 3\nu\k0G$ and $\|v_2\| \ge 3G$, then $\tilde{\delta}=0$.  If, say $\|v_1\| \le 3G$,
then
\begin{align*}
\|\tilde{v}_1-\tilde{v}_2\| &\le |\varphi\left(\frac{\|v_1\|}{\nu\k0}\right)-\varphi\left(\frac{\|v_2\|}{\nu\k0}\right)|\|v_1\|
 +\varphi\left(\frac{\|v_2\|}{\nu\k0}\right)\|v_1-v_2\| \cr
    & \le \frac{1}{\nu\k0 G} |\|v_1\|-\|v_2\| |3\nu\k0 G + \|v_1-v_2\|  \;
\end{align*}
thus
\begin{equation}
\label{delta_lipschitz}
\|\tilde{\delta}\|\leq{4}\|\delta\|\;.
\end{equation}
\end{proof}

\begin{theorem}
\label{ODE}Suppose $G$ and $N$ satisfy \eqref{gn_condition4}. The equation \eqref{ode_final}
is an ordinary differential equation in the Banach space $C_b(\mathbb{R},P_NH)$.
\end{theorem}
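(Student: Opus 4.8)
The plan is to read off the right-hand side of \eqref{ode_final} as a vector field
$$
F(v)=P_Nf-\nu Av-P_NB\bigl(\varphi v+W(v)\bigr),
$$
and to prove that $F$ maps $X=C_b(\mathbb{R},P_NH)$ into itself and is globally Lipschitz there. Once this is done, \eqref{ode_final} is literally equation \eqref{2.7a}, i.e. $dv/dt=F(v)$ with $F$ satisfying \eqref{2.7b}, and the classical Cauchy--Lipschitz existence--uniqueness theorem in a Banach space yields a unique global solution for every initial trajectory; this is exactly the assertion that \eqref{ode_final} is an ordinary differential equation in $X$. Throughout I will use that \eqref{gn_condition4} guarantees the conclusions of Lemma~\ref{3.1} and Corollary~\ref{w_lipschitz}.

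First I would check that $F(v)\in X$ for each $v\in X$. The term $P_Nf$ is a constant element of $P_NH$. The term $\nu Av$ is a bounded linear operator on the finite-dimensional space $P_NH$, with $\|Av(s)\|\le(\k0 N)^2\|v(s)\|$, so it maps $X$ into $X$ and is continuous in $s$. For the nonlinear term set $u=\varphi v+W(v)$; by \eqref{3.0} and \eqref{bound_w_3} the $V$-norm $\|u(s)\|$ is bounded uniformly in $s$ by $(3+\beta(G,N))\nu\k0 G$. Since the range $P_NH$ is finite-dimensional, testing $P_NB(u,u)$ against $w\in P_NH$ and trading norms (Poincar\'e, together with $|Aw|\le(\k0 N)^2|w|$) in an inequality such as \eqref{Lad} or \eqref{A46c} gives $\|P_NB(u(s),u(s))\|\le C_N\|u(s)\|^2$, hence boundedness. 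Continuity in $s$ follows from the same estimate applied to $u(s)-u(s')$: here $\varphi v$ is $V$-continuous and $W(v)\in Y$ is $L^2$-continuous, and the bilinear estimate $\|P_NB(a,b)\|\le C_N|a|\,\|b\|$ shows $s\mapsto P_NB(u(s),u(s))$ is continuous into $P_NH$. Thus $F(v)\in X$.

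Next I would establish the global Lipschitz bound \eqref{2.7b}. The constant term drops out and the linear term contributes at most $\nu(\k0 N)^2\|v_1-v_2\|_X$, so the crux is the nonlinear term. Writing $u_i=\varphi v_i+W(v_i)$ and using bilinearity,
$$
B(u_1,u_1)-B(u_2,u_2)=B(u_1-u_2,u_1)+B(u_2,u_1-u_2),
$$
with $u_1-u_2=\tilde{\delta}+\gamma$ in the notation introduced above. The increment $\tilde{\delta}$ is controlled by $\|\tilde{\delta}(s)\|\le 4\|\delta(s)\|\le 4\|v_1-v_2\|_X$ via \eqref{delta_lipschitz}, while $\gamma=W(v_1)-W(v_2)$ is controlled in the $V$-norm by \eqref{lipsh2.5} (equivalently by Corollary~\ref{w_lipschitz}, with constant $L_W$ as in \eqref{w_lipschitz1}), so that $\sup_s\|u_1(s)-u_2(s)\|\le(4+L_W)\|v_1-v_2\|_X$. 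Freezing the uniformly bounded factor $u_i$ (the $V$-bound from the previous step) and differencing the other argument, a bilinear estimate together with $\|P_NB(a,b)\|\le C_N|a|\,\|b\|$ yields $\|P_N[B(u_1,u_1)-B(u_2,u_2)]\|_X\le C\|v_1-v_2\|_X$. Collecting the linear and nonlinear contributions gives \eqref{2.7b}.

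The main obstacle is precisely this Lipschitz estimate for the nonlinear term: na\"ively $P_NB$ would demand control of $u$ in a norm stronger than that of $X$, and $W(v)$ lives only in $Y$. This is overcome by the two facts already in hand. First, the range of $P_N$ is finite-dimensional, so all norms there are equivalent and one passes freely between $|\cdot|$ and $\|\cdot\|$ at the cost of powers of $\k0 N$, which tames $P_NB$. Second, the Lipschitz continuity of $W$ combined with the uniform a priori bounds \eqref{bound_w_3} lets me freeze one argument of the bilinear form while differencing the other. With $F$ thereby shown to be globally Lipschitz on $X$, the proof concludes by invoking the standard existence--uniqueness theory for ordinary differential equations in Banach spaces.
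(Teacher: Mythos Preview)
Your proposal is correct and follows essentially the same route as the paper: identify the right-hand side as a map $F:X\to X$, handle the linear part trivially, split the bilinear difference via $B(u_1)-B(u_2)=B(u_1-u_2,u_1)+B(u_2,u_1-u_2)$, freeze the uniformly $V$-bounded factor $u_i=\tilde v_i+W(v_i)$ using \eqref{3.0} and \eqref{bound_w_3}, and control the differenced factor through \eqref{delta_lipschitz} and Corollary~\ref{w_lipschitz}. The only notable difference is that the paper applies \eqref{titi} directly to $P_NB$ (exploiting that the test function lies in $P_NH$, so the logarithm is $(\ln eN)^{1/2}$) and thereby obtains an explicit Lipschitz constant $L_F\le \nu\k0^2N^2+2c_T\nu\k0^2NG(3+\beta(G,N))(\ln eN)^{1/2}(3+L_W)$, whereas you invoke generic norm equivalence on the finite-dimensional range $P_NH$, which gives a correct but less sharp constant (powers of $N$ rather than a logarithm). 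One minor point: your $s$-continuity argument for $P_NB(u(s),u(s))$ uses only the estimate $\|P_NB(a,b)\|\le C_N|a|\,\|b\|$, but for the term $P_NB(u(s'),u(s)-u(s'))$ you need $\|u(s)-u(s')\|\to 0$, i.e.\ $V$-continuity of $W(v)(\cdot)$ rather than just the $H$-continuity recorded in Lemma~\ref{3.1}; this follows from standard parabolic regularity for \eqref{NSEw} (or, when $h\in D(A^{1/2})$, by interpolating $H$-continuity against the $D(A)$-bound \eqref{Aw_bound}), so the argument is easily completed.
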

\begin{proof}
It suffices to show that $F(v)=P_Nf-\nu{A}v-P_NB(v+W(v),v+W(v))$ is Lipschitz.
To estimate $L_F$, the Lipschitz constant of $F$, we apply \eqref{titi}, \eqref{3.0},
\eqref{bound_w_3} and Corollary~\ref{w_lipschitz} to obtain for every (nonspecified) $s \in \bR$
\begin{equation*}
\begin{aligned}
\label{}
\eta(s) = |P_NB(\tilde{v}_1&+W(v_1),\tilde{v}_1+W(v_1))-P_NB(\tilde{v}_2+W(v_2),\tilde{v}_2+W(v_2))|\\
 \le {}&|P_NB(\tilde{v}_1+W(v_1),\tilde{v}_1+W(v_1))-P_NB(\tilde{v}_1+W(v_1),\tilde{v}_2+W(v_2))| \\ &+|P_NB(\tilde{v}_1+W(v_1),\tilde{v}_2+W(v_2))-P_NB(\tilde{v}_2+W(v_2),\tilde{v}_2+W(v_2))|\\
 ={}&|P_NB(\tilde{v}_1+W(v_1),\tilde{v}_1-\tilde{v}_2+W(v_1)-W(v_2))|\\
   &+|P_NB(\tilde{v}_1-\tilde{v}_2+W(v_1)-W(v_2),\tilde{v}_2+W(v_2))| \\
  \le {}&c_T(\|\tilde{v}_1+W(v_1)\|+\|\tilde{v}_2+W(v_2)\|)(\ln{eN})^{1/2}\|\tilde{v}_1-\tilde{v}_2+W(v_1)-W(v_2)\|\;.
  \end{aligned}
\end{equation*}
It follows that
$$
 \sup_{s \in \bR} \eta(s) \leq{}2c_T\nu\kappa_0G(3+\beta(G,N))(\ln{eN})^{1/2}(3+L_W)\|v_1-v_2\|_X
$$
and therefore
\begin{equation*}
\begin{aligned}
\|F(v_1)-&F(v_2)\|_X\leq{}\kappa_0N\sup_{s\in \bR}|F(v_1(s))-F(v_2(s))|\\
\leq{}&[\nu\k0^2N^2+2c_T\nu\kappa^2_0NG(3+\beta(G,N))(\ln{eN})^{1/2}(3+L_W)]\|v_1-v_2\|_X
\end{aligned}
\end{equation*}
\end{proof}

We denote the solution operator for \eqref{ode_final} by $S_{\text{DF}}$, i.e.
$S_{\text{DF}}(t)v_0=v(t)$, where $v(t)$ is the solution with initial value $v(0)=v_0$.
Using \eqref{commute_translation} we obtain the following corollary
\begin{corollary}
$\tau_\theta{}S_{\text{DF}}(t)=S_{\text{DF}}(t)\tau_\theta$.
\end{corollary}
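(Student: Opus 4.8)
The plan is to show that the two trajectories $t \mapsto \tau_\theta S_{\text{DF}}(t) v_0$ and $t \mapsto S_{\text{DF}}(t)\tau_\theta v_0$ both solve the determining form ODE \eqref{ode_final} with the same initial datum $\tau_\theta v_0$, and then to invoke the uniqueness of solutions guaranteed by the global Lipschitz property of $F$ established in Theorem \ref{ODE}. The entire argument rests on one structural fact, namely that the vector field $F$ commutes with the shift $\tau_\theta$. I would also note at the outset that $\tau_\theta$ is a bounded (in fact isometric) linear operator on $X$, since $\|\tau_\theta v\|_X = \sup_s \|v(s+\theta)\| = \|v\|_X$; being a fixed bounded linear operator independent of $t$, it passes through the $t$-derivative of any trajectory, so that $\frac{d}{dt}[\tau_\theta v(t)] = \tau_\theta \frac{d}{dt} v(t)$.

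The key step is to verify that $F \circ \tau_\theta = \tau_\theta \circ F$, where $F(v) = P_N f - \nu A v - P_N B(\varphi v + W(v))$. The operators $A$, $P_N$ and $B$ all act on the $H$-valued spatial slot, whereas $\tau_\theta$ shifts only the auxiliary variable $s$; hence each of them commutes with $\tau_\theta$, and the constant function $P_N f \in X$ is fixed by $\tau_\theta$. The cutoff commutes as well, since $(\tau_\theta(\varphi v))(s) = \varphi(\|v(s+\theta)\|/(\nu\kappa_0)) v(s+\theta) = \varphi(\|(\tau_\theta v)(s)\|/(\nu\kappa_0))(\tau_\theta v)(s)$. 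The remaining, and crucial, ingredient is \eqref{commute_translation}, which states exactly that $W(\tau_\theta v) = \tau_\theta W(v)$. Assembling these observations yields $F(\tau_\theta v) = \tau_\theta F(v)$.

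With these facts the proof closes quickly. Writing $v(t) = S_{\text{DF}}(t) v_0$ and applying $\tau_\theta$ to $\frac{dv}{dt} = F(v)$, I would obtain $\frac{d}{dt}[\tau_\theta v(t)] = \tau_\theta F(v(t)) = F(\tau_\theta v(t))$ together with $\tau_\theta v(0) = \tau_\theta v_0$. Thus $\tau_\theta v(\cdot)$ is a solution of \eqref{ode_final} with initial value $\tau_\theta v_0$, and since $F$ is globally Lipschitz this solution is unique, forcing $\tau_\theta S_{\text{DF}}(t) v_0 = \tau_\theta v(t) = S_{\text{DF}}(t)\tau_\theta v_0$. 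As $v_0 \in X$ is arbitrary, the stated identity $\tau_\theta S_{\text{DF}}(t) = S_{\text{DF}}(t)\tau_\theta$ follows. The only step requiring any care is the commutation $F\circ\tau_\theta = \tau_\theta\circ F$, and within it the only nontrivial point is the translation-equivariance of $W$ recorded in \eqref{commute_translation}; everything else is the routine observation that the spatial operators and the cutoff do not see the shift in the variable $s$.
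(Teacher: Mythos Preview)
Your proof is correct and is precisely the argument the paper has in mind: the paper simply states that the corollary follows from \eqref{commute_translation}, and you have spelled out the standard details behind that one-line justification, namely that $F$ commutes with $\tau_\theta$ (the only nontrivial ingredient being \eqref{commute_translation}) and that uniqueness for the globally Lipschitz ODE then forces the two solutions to coincide.
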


It follows that periodicity of trajectories is conserved by  \eqref{ode_final}.

\begin{proposition}
If $v(t)$ is the solution of \eqref{ode_final} and
$v(0)$ is a periodic trajectory with period $T$ then $v(t)$ is periodic with
period $T$ for any $t$.  In particular, if $v(0)$ is constant trajectory, then
$v(t)$ is constant for any $t$.
\end{proposition}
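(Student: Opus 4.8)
The plan is to derive the statement entirely from the commutation relation $\tau_\theta S_{\text{DF}}(t) = S_{\text{DF}}(t)\tau_\theta$ recorded in the corollary immediately preceding this proposition, together with the uniqueness of solutions to \eqref{ode_final} that is guaranteed by the global Lipschitz bound \eqref{2.7b}. The first step is to recast the hypothesis in terms of the shift operator $\tau_\theta$ of \eqref{shiftdef}: the assertion that the trajectory $v(0)\in X$ is periodic in $s$ with period $T$ is exactly the statement $\tau_T v(0)=v(0)$ as elements of $X$. This translation is the only place where one must be careful, so I would state it explicitly to keep the argument transparent.

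Second, I would apply the commutation corollary with $\theta=T$. Writing $v(t)=S_{\text{DF}}(t)v(0)$, the computation is
\[
\tau_T v(t)=\tau_T S_{\text{DF}}(t)v(0)=S_{\text{DF}}(t)\tau_T v(0)=S_{\text{DF}}(t)v(0)=v(t),
\]
where the middle equality is the corollary and the penultimate one uses $\tau_T v(0)=v(0)$. Since $\tau_T v(t)=v(t)$ holds for every $t$, the trajectory $v(t)$ is periodic in $s$ with period $T$, which is the main claim. Here the fact that $F$ is globally Lipschitz is used to ensure that $S_{\text{DF}}(t)$ is well defined and unique for all $t\in\mathbb{R}$, so the conclusion holds for all $t$, forward and backward.

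For the constant case I would note that a trajectory $v(0)$ being independent of $s$ is equivalent to shift-invariance under \emph{every} $\theta\in\mathbb{R}$, namely $\tau_\theta v(0)=v(0)$ for all $\theta$. Repeating the displayed computation for arbitrary $\theta$ yields $\tau_\theta v(t)=v(t)$ for all $\theta$ and all $t$, which says precisely that $v(t)$ is independent of $s$; writing $v(t,s)=c(t)$ gives the stated conclusion.

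The substantive content is not in this proposition but in the commutation corollary it invokes, which is already in hand; consequently I expect no genuine obstacle and no estimates. The only thing worth flagging is where the real work sits: the corollary rests on the equivariance \eqref{commute_translation} of $W$ under $\tau_\theta$, together with the observation that the remaining constituents of the vector field $F$ --- the linear term $\nu A v$, the forcing $P_N f$ regarded as a constant element of $X$, the pointwise cutoff $\varphi$ of \eqref{varphidef}, and the bilinear term $B$ --- all act pointwise in the auxiliary variable $s$ and hence commute with $\tau_\theta$; uniqueness then transfers this equivariance of $F$ to the flow $S_{\text{DF}}(t)$. Given these facts the proposition follows immediately.
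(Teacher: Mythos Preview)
Your proposal is correct and follows exactly the route the paper intends: the proposition is stated immediately after the commutation corollary with the remark that periodicity is ``conserved,'' and no further proof is given. Your argument simply makes explicit the one-line deduction $\tau_T v(t)=\tau_T S_{\text{DF}}(t)v(0)=S_{\text{DF}}(t)\tau_T v(0)=v(t)$ (and its analogue for all $\theta$ in the constant case), which is precisely what the paper leaves implicit.
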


Recall that for $v(t) \in X$ we denote by $v(t,s)$ the value of $v(t)$ at the parameter $s \in \bR$.

\begin{lemma}\label{GivesProj}
Suppose $v$ is a solution of \eqref{ode_final} such that
$\|v(0)\|_{X}\leq{}2\nu\kappa_0G$ and
\begin{equation}\label{vshift}
v(t,s)=v(0,t+s) \quad \text{for all} \quad t,s \in \bR\;.
\end{equation}
Then
$u(t)=v(0,t)+W(t,v(0,\cdot))$ is the solution of \eqref{NSE} satisfying
$$
P_nu(t)=v(0,t)=v(t,0) \quad \text{for all}\quad t \in \bR\;.
$$
\end{lemma}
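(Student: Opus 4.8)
The plan is to show that the cutoff $\varphi$ is inactive along this trajectory, so that \eqref{ode_final} reduces to the uncut equation \eqref{2.6}, and then to read off the two projections of \eqref{NSE} separately: the $Q_N$-projection from the defining property of $W$, and the $P_N$-projection from the determining form evaluated pointwise in $s$ together with the translation-commutation of $W$.

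First I would record that the traveling-wave hypothesis \eqref{vshift} preserves the $X$-norm: since $v(t,s)=v(0,t+s)$, we have $\|v(t)\|_X=\sup_s\|v(0,t+s)\|=\|v(0)\|_X\le 2\nu\kappa_0 G$ for every $t$. Because $\varphi(\xi)=1$ for $|\xi|\le 2G$, this forces $\varphi(\|v(t)\|/(\nu\kappa_0))\equiv 1$, so along this solution \eqref{ode_final} coincides with \eqref{2.6}. Writing $v_0(\cdot)=v(0,\cdot)\in X$, $w_0=W(v_0)\in Y$, and $u(t)=v_0(t)+w_0(t)$, the asserted identity $P_Nu(t)=v_0(t)=v(t,0)$ is then immediate: $w_0(t)\in Q_NH$ gives $P_Nu(t)=v_0(t)$, while $v(t,0)=v(0,t)=v_0(t)$ by \eqref{vshift}. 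So it remains only to verify that $u$ solves \eqref{NSE}.

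For the $Q_N$-projection I would invoke Lemma~\ref{3.1}: $w_0=W(v_0)$ is the unique bounded solution of \eqref{NSEw} with $h=Q_Nf$ and $v=v_0$, and since $\varphi\equiv1$ this reads $\frac{d}{ds}w_0+\nu Aw_0+Q_NB(v_0+w_0)=Q_Nf$, which is exactly $Q_N$ applied to \eqref{NSE} for $u=v_0+w_0$. For the $P_N$-projection I would use the pointwise form of \eqref{2.6} displayed just after it, namely $\partial_t v(t,s)+\nu Av(t,s)+P_NB(v(t,s)+W(s;v(t,\cdot)))=P_Nf$. Here \eqref{vshift} gives $v(t,\cdot)=\tau_t v_0$, so the commutation relation \eqref{commute_translation} yields $W(s;v(t,\cdot))=W(s;\tau_t v_0)=w_0(t+s)$; with $v(t,s)=v_0(t+s)$ and $\partial_t v(t,s)=v_0'(t+s)$ this produces, for all $t,s$, the identity $v_0'(t+s)+\nu Av_0(t+s)+P_NB(v_0(t+s)+w_0(t+s))=P_Nf$. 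Setting $r=t+s$, which ranges over all of $\mathbb{R}$, gives $v_0'(r)+\nu Av_0(r)+P_NB(v_0(r)+w_0(r))=P_Nf$, i.e. $P_N$ applied to \eqref{NSE}. Combining the two projections shows that $u$ solves \eqref{NSE}; and since $\|v_0(r)\|\le 2\nu\kappa_0G$ and $w_0\in Y$ is bounded in $V$ by Lemma~\ref{3.1}, $u$ is bounded in $V$ on all of $\mathbb{R}$ and hence lies on $\cA$ by \eqref{attractordef}.

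The step requiring the most care is the passage $\partial_t v(t,s)=v_0'(t+s)$, i.e. establishing that $v_0(\cdot)$ is genuinely differentiable in its argument and that the $X$-valued derivative $\frac{dv}{dt}$ supplied by Theorem~\ref{ODE} may be evaluated pointwise at each fixed $s$. I would handle this by noting that $v(h,s)=v_0(h+s)=v(0,s+h)$, so the difference quotient $h^{-1}(v(0,s+h)-v(0,s))=h^{-1}(v(h,s)-v(0,s))$ converges to the pointwise value $(\frac{dv}{dt}(0))(s)$, since convergence in $\|\cdot\|_X$ implies convergence at $s$. This simultaneously proves $v_0$ is differentiable and identifies $v_0'(s)$ with the pointwise value of the vector field; the same computation at arbitrary $t$ gives $\partial_t v(t,s)=v_0'(t+s)$, closing the argument.
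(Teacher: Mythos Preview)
Your proof is correct and follows essentially the same route as the paper: use the traveling-wave hypothesis together with the commutation relation \eqref{commute_translation} to identify $W(s;v(t,\cdot))$ with $W(t+s;v(0,\cdot))$, observe that $\varphi\equiv1$ so \eqref{ode_final} reduces to \eqref{2.6}, and then read off the $P_N$- and $Q_N$-projections of \eqref{NSE}. Your treatment is in fact more careful than the paper's on two points: you explicitly verify $\|v(t)\|_X\le2\nu\kappa_0G$ for all $t$ (the paper only invokes $\|v(0)\|$), and you justify the pointwise differentiability $\partial_t v(t,s)=v_0'(t+s)$, which the paper leaves implicit.
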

\begin{proof}
We first note that by \eqref{vshift} and \eqref{commute_translation}
\begin{equation*}\label{wshift}
W(0;v(t,\cdot))=W(0;\tau_t v(0,\cdot))=\tau_tW(0;v(0,\cdot))=W(t;v(0,\cdot))\;,
\end{equation*}
so we can set
\begin{equation}\label{wshift}
w(t)=W(0;v(t))=W(t;v(0))\;.
\end{equation}
Since $\|v(0)\|\leq{}2\nu\kappa_0G$ we have $\varphi=1$, where
$\varphi$ is as in \eqref{varphidef}.  By \eqref{vshift}, \eqref{wshift}, and the definition of the map $W$,
we have
\begin{equation}\label{whalf}
\begin{aligned}
{d\over dt}w(t)+ \nu A w(t)+ Q_NB(v(0,t)+w(t))&= \\
{d\over dt}w(t)+ \nu A w(t)+ Q_NB(v(t,0)+W(t;v(0,\cdot)))&=Q_Nf\;,
\end{aligned}
\end{equation}
as well as
\begin{equation}\label{vhalf}
\begin{aligned}
{d\over dt}v(t)+ \nu A v(t)+ P_NB(v(t)+w(t))&= \\
{d\over dt}v(t,0)+ \nu A v(t,0)+ P_NB(v(t,0)+W(0;v(t,\cdot))&)=P_Nf\;,
\end{aligned}
\end{equation}
Adding the equations \eqref{vhalf} and \eqref{whalf}, we see that
$u(t)=v(0,t)+W(t,v(0,\cdot))$ is a solution of \eqref{NSE}.
\end{proof}
We also have the following property of \eqref{ode_final}
\begin{proposition}
Suppose that $v_1(0,s)=v_2(0,s)$ for $s\leq{s_0}$.  Then $v_1(t,s)=v_2(t,s)$ for
$s\leq{s_0}$ for any $t$.
\end{proposition}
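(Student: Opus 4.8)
The plan is to exploit the fact that the vector field $F$ of the determining form \eqref{ode_final} is \emph{non-anticipative} (causal) in the trajectory variable $s$: the value of $F(v)$ at a point $s\le s_0$ depends only on the restriction of $v$ to the history half-line $(-\infty,s_0]$. Granting this, the evolution of a solution, read off only on $(-\infty,s_0]$, is governed by an autonomous, globally Lipschitz ordinary differential equation on the Banach space $X^-:=C_b((-\infty,s_0],P_NH)$, and the proposition becomes an instance of uniqueness for that ODE. Concretely, I would let $\rho\colon X\to X^-$ be the restriction map (a bounded, linear, surjective operator), note that $\rho$ commutes with $d/dt$, and aim to produce a Lipschitz $F^-\colon X^-\to X^-$ with $\rho\circ F=F^-\circ\rho$; then $\rho(v_1(\cdot))$ and $\rho(v_2(\cdot))$ both solve $\dot u=F^-(u)$ with the same initial datum $\rho(v_1(0))=\rho(v_2(0))$, so they coincide for all $t$ (forward and backward), which is exactly the assertion $v_1(t,s)=v_2(t,s)$ for $s\le s_0$.

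The key step is therefore to verify causality of $F(v)=P_Nf-\nu Av-P_NB(\varphi v+W(v))$. The forcing $P_Nf$ is constant, and both $\nu Av(t,\cdot)$ and the pointwise truncation $\tilde v(t,\cdot)=\varphi v(t,\cdot)$ act on each coordinate $s$ separately, hence are trivially causal; in particular, exactly as in the proof of Corollary~\ref{start_v}, $v_1(s)=v_2(s)$ for $s\le s_0$ forces $\tilde v_1(s)=\tilde v_2(s)$ for $s\le s_0$. The only genuinely nonlocal ingredient, $W$, is already known to be causal: Corollary~\ref{start_v} states precisely that if $v_1(s)=v_2(s)$ for all $s\le s_0$ then $W(s;v_1)=W(s;v_2)$ for all $s\le s_0$. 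Since $B$ is evaluated pointwise in $s$, combining these facts shows $F(v_1)(s)=F(v_2)(s)$ for every $s\le s_0$ whenever $v_1$ and $v_2$ agree on $(-\infty,s_0]$. This is exactly the statement that $F$ descends to a well-defined map $F^-$ on $X^-$ via $F^-(\rho v):=\rho F(v)$, defined on all of $X^-$ because $\rho$ is onto.

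It then remains to record that $F^-$ is globally Lipschitz, which I would obtain by restricting the estimates already proved for the full line. The Lipschitz bound \eqref{lipsh2.5} for $W$ is itself causal — it controls $\|\gamma(s)\|$ by $\sup_{s'\le s}\|\tilde\delta(s')\|$ — so taking $s\le s_0$ and using $\|\tilde\delta\|\le 4\|\delta\|$ from \eqref{delta_lipschitz} yields $\sup_{s\le s_0}\|(W(v_1)-W(v_2))(s)\|\le 4\,L_W\sup_{s\le s_0}\|v_1(s)-v_2(s)\|$, the half-line analog of Corollary~\ref{w_lipschitz}; feeding this into the computation of Theorem~\ref{ODE} with $\sup_{s\in\bR}$ replaced by $\sup_{s\le s_0}$ (every bilinear estimate there is applied at a fixed $s$) gives the Lipschitz constant of $F^-$ under the standing hypothesis \eqref{gn_condition4}. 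Uniqueness for the Lipschitz ODE $\dot u=F^-(u)$ in $X^-$ (Picard--Lindel\"of in a Banach space) finishes the argument. The main obstacle is the causality verification of the previous paragraph: one must be confident that the truncation $\varphi$ and the enslavement map $W$ do not let information propagate from $s>s_0$ into the window $s\le s_0$, which is why Corollary~\ref{start_v} and the pointwise-in-$s$ nature of the remaining operators are essential; the same remark at the end of Section~\ref{secExtend} that the whole construction is valid on half-lines guarantees $F^-$ and its solutions make sense. As a self-contained alternative I would run a direct Gronwall estimate on $\Phi(t)=\sup_{s\le s_0}|v_1(t,s)-v_2(t,s)|^2$, using the same causal bounds to close the inequality and conclude $\Phi\equiv0$.
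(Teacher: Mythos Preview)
Your proposal is correct and follows essentially the same approach as the paper's proof: both reduce to uniqueness for the determining-form ODE viewed on the half-line space $C_b((-\infty,s_0],P_NH)$, relying on Corollary~\ref{start_v} (causality of $W$) and the remark at the end of Section~\ref{secExtend}. You have simply made explicit the descent to a well-defined Lipschitz vector field $F^-$ on $X^-$ and the role of the pointwise-in-$s$ nature of $\varphi$, $A$, and $B$, which the paper compresses into the sentence ``Thus we can consider the differential equation \eqref{ode_final} on this space. It is still an ordinary differential equation.''
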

\begin{proof}
By Corollary~\ref{start_v} the map $W$ is well-defined on the space
$C_b((-\infty,s_0],P_NH)$.  Thus we can consider the differential equation
\eqref{ode_final} on this space.  It is still an ordinary differential equation, and
so through each point there is only one solution.  Each solution of
\eqref{ode_final} on the whole $X$ gives a solution of \eqref{ode_final} on
$C_b((-\infty,s_0],P_NH)$, by restriction.  Since solutions $v_1(t)$ and $v_2(t)$
of the restriction of \eqref{ode_final} on $C_b((-\infty,s_0],P_NH)$ coincide for
$t=0$, they coincide for all $t$.
\end{proof}
\section{Absorbing ball}\label{absballsec}
We will now prove that the determining form \eqref{ode_final} has an absorbing ball in $X$.
Recall that this means an estimate of the type \eqref{AbsTime}, holds
but with $\|\cdot\|$ replaced by $|\cdot|_X$.
Let $g=P_Nf$ and take the
scalar product with $v=v(t,s)$ (and denote $W(v)$ by $w$) to obtain
\begin{equation*}
\begin{aligned}
\frac{1}{2}\frac{d}{d{t}}&|v|^2+\nu\|v\|^2\leq|g||v|+|\varphi(B(v,v),w)|+|\varphi(B(w,v),w)|\\
\leq{}&|g|\|v\|+c_B\varphi\|v\|^2|w|(\ln{eN})^{1/2}+\csubL{}\varphi|w|\|w\|\|v\|\\
\leq{}&\kappa_0\nu^2\frac{|g|}{|f|}G\|v\|+\nu{}c_B\|v\|^2\frac{G\beta(G,N)}{N}(\ln{eN})^{1/2}+\kappa_0\nu^2\csubL{}\frac{G^2\beta(G,N)^2}{N}\|v\|,
\end{aligned}
\end{equation*}
where we used inequalities \eqref{agmon},\eqref{Lad}, \eqref{bound_w_3} and \eqref{3.19.5}.
It follows from Lemma~\ref{useit} that if
\begin{equation}
\label{4.11.5}
N>c_BG\beta(G,N)(\ln{eN})^{1/2}
\end{equation}
holds, we have the following
\begin{proposition}
\label{absorbing_ball}
If \eqref{4.11.5} and \eqref{gn_condition4} hold, then
\begin{equation}
\label{abs_radius}
|v| \le\nu{G}\frac{\frac{|g|}{|f|}+\csubL{}G\frac{\beta(G,N)^2}{N}}{1-c_B\frac{G\beta(G,N)}{N}(\ln{eN})^{1/2}}\;.
\end{equation}
\end{proposition}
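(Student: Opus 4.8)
The plan is to read the differential inequality displayed just above the statement as a pointwise-in-$s$ instance of the hypothesis of Lemma~\ref{useit}, and then to identify the absorbing radius with the quantity $\varepsilon b/a$ furnished by that lemma. First I would transfer the term carrying $\|v\|^2$ from the right-hand side of the last line of that inequality to the left, obtaining, for each fixed $s\in\bR$,
\[
\frac{d}{dt}|v|^2 \le -a\|v\|^2 + b\|v\|,
\]
where
\[
a = 2\nu\Bigl(1 - c_B\tfrac{G\beta(G,N)}{N}(\ln eN)^{1/2}\Bigr),\qquad
b = 2\kappa_0\nu^2 G\Bigl(\tfrac{|g|}{|f|} + \csubL\tfrac{G\beta(G,N)^2}{N}\Bigr).
\]
It is precisely condition \eqref{4.11.5} that guarantees $a>0$, so the coefficient of $\|v\|^2$ keeps a fixed sign.

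The remaining hypothesis $x\le\varepsilon y$ of Lemma~\ref{useit} I would check with $x(t)=|v(t,s)|$ and $y(t)=\|v(t,s)\|$: since $v(t,s)\in P_NH$ has vanishing zero mode, its lowest wavenumber is $\kappa_0$, giving the Poincar\'e-type bound $|v|\le\kappa_0^{-1}\|v\|$, so that $\varepsilon=1/\kappa_0$. Crucially, $a$ and $b$ do not depend on $s$, because the high modes enter only through $w=W(s;v(t,\cdot))$, which was bounded uniformly in $s$ by \eqref{bound_w_3} (legitimate since \eqref{gn_condition4} is in force). I would then invoke parts \ref{item1} and \ref{item2} of Lemma~\ref{useit}: part \ref{item1} forces $|v(t,s)|$ to decay exponentially toward $\varepsilon b/a$ while it remains above that level, and part \ref{item2} traps it thereafter, which is exactly the absorbing-ball property in the $|\cdot|$-norm. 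The factors of $2$ and $\kappa_0$ cancel, leaving $\varepsilon b/a$ equal to the right-hand side of \eqref{abs_radius}; and since $\varepsilon$, $a$, $b$ are independent of $s$, the pointwise bound upgrades to one for $|v|_X=\sup_s|v(t,s)|$.

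Because the energy inequality is already established in the excerpt, what remains is mostly bookkeeping rather than a substantial new estimate. The one point genuinely needing care is the uniformity in $s$: to assemble the pointwise bounds into an absorbing ball for the $X$-norm I must be able to choose the absorbing time independently of $s$. This is available because $a$ and $b$ are $s$-independent and the initial datum $v(0)$ lies in $X$, so $|v(0)|_X<\infty$ controls the absorbing time uniformly, exactly as the scalar estimate \eqref{AbsTime2} does for the Navier--Stokes flow.
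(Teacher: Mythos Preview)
Your proposal is correct and matches the paper's approach exactly: the paper derives the displayed energy inequality and then simply says ``It follows from Lemma~\ref{useit}'', leaving to the reader precisely the identification of $x=|v|$, $y=\|v\|$, $\varepsilon=1/\kappa_0$, and the constants $a,b$ that you spell out. Your remark on $s$-uniformity (so that the pointwise bound upgrades to one in $\|\cdot\|_X$) is a detail the paper leaves implicit.
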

While the relation $\|v\|\leq{\kappa_0N}|v|$ provides an immediate bound on
$\|v\|$ provided \eqref{4.11.5} holds, we can derive a sharper bound on
$\|v\|$.  As above, taking the scalar
product with $Av$ we have
\begin{equation}
\label{dissip_v2}
\begin{aligned}
\frac{1}{2}\frac{d}{d{t}}\|v\|^2+\nu|Av|^2&=(g,Av)-(B(\tilde{v}+w,\tilde{v}+w),Av)\\
&\leq{} |g||Av|-(B(\tilde{v},w),Av)-(B(w,\tilde{v}),Av)-(B(w,w),Av)\\
&\leq{}|g||Av|+c_T(2\|\tilde{v}\|\|w\||Av|+\|w\|^2|Av|)(\ln{eN})^{1/2}\\
&\leq{}\nu^2\kappa_0^2\left[\frac{|g|}{|f|}G+c_T(\ln{eN})^{1/2}G^2(6+\beta(G,N))\beta(G,N)\right]|Av|\;,
\end{aligned}
\end{equation}
where $v=v(t,s)$ and \eqref{dissip_v2} is valid at each $s\in \bR$.
Now applying Lemma~\ref{useit} (i) and then (ii), we obtain
\begin{theorem}
\label{absorbing_ball1}
If \eqref{gn_condition4} holds, then the ball centered at the origin, with
radius (strictly) greater than
\begin{equation}
\label{rad_2}
r_0=\nu\kappa_0G\left[\frac{|g|}{|f|}+c_T(\ln{eN})^{1/2}G(6+\beta(G,N))\beta(G,N)\right]
\end{equation}
is an absorbing ball in $X$ for \eqref{ode_final}.
\end{theorem}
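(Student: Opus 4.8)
The plan is to read the absorbing ball off directly from the differential inequality \eqref{dissip_v2}, which has already been established pointwise in $s$ and is exactly of the form handled by Lemma~\ref{useit}. Fix $s\in\bR$ and set $x(t)=\|v(t,s)\|$, $y(t)=|Av(t,s)|$. Then \eqref{dissip_v2} reads
\begin{equation*}
\frac{d}{dt}x^2(t)\leq -a\,y^2(t)+b\,y(t),\qquad a=2\nu,\quad b=2\nu^2\k0^2\Big[\tfrac{|g|}{|f|}G+c_T(\ln eN)^{1/2}G^2(6+\beta(G,N))\beta(G,N)\Big].
\end{equation*}
The constraint $x\leq\varepsilon y$ demanded by the lemma comes from the Poincar\'e-type inequality $\|v\|\leq \k0^{-1}|Av|$, valid for every $v\in H$ since $\lambda_0=\k0^2$ is the smallest eigenvalue of $A$; hence one takes $\varepsilon=1/\k0$. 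A direct computation then gives $\varepsilon b/a=r_0$ with $r_0$ as in \eqref{rad_2}, while the decay rate appearing in part \ref{item1} of Lemma~\ref{useit} equals $a/(2\varepsilon^2)=\nu\k0^2$.

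With these identifications the argument is the standard absorbing-ball dichotomy, carried out for each frozen $s$. If $\|v(t',s)\|>r_0$ for all $t'\in[0,t]$, then part \ref{item1} of Lemma~\ref{useit} yields
\[
\|v(t,s)\|-r_0\leq e^{-\nu\k0^2 t}\big(\|v(0,s)\|-r_0\big);
\]
otherwise $\|v(\tilde t,s)\|\leq r_0$ for some $\tilde t\in[0,t]$, and part \ref{item2} of Lemma~\ref{useit} forces $\|v(t,s)\|\leq r_0$. In either case, bounding $\|v(0,s)\|$ by $\|v(0)\|_X$,
\[
\|v(t,s)\|\leq r_0+e^{-\nu\k0^2 t}\big(\|v(0)\|_X-r_0\big)^{+}.
\]

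The decisive point is that $r_0$, the rate $\nu\k0^2$, and therefore the whole bound are independent of $s$, so taking the supremum over $s$ gives $\|v(t)\|_X\leq r_0+e^{-\nu\k0^2 t}(\|v(0)\|_X-r_0)^{+}$. Consequently, for any prescribed radius $r>r_0$ the time $T=(\nu\k0^2)^{-1}\log\big((\|v(0)\|_X-r_0)^{+}/(r-r_0)\big)$ yields $\|v(t)\|_X\leq r$ for all $t\geq T$, which is precisely the assertion that every ball of radius strictly greater than $r_0$ is absorbing in $X$. The role of hypothesis \eqref{gn_condition4} is only to guarantee, via the earlier lemmas, that $W$ is well defined and Lipschitz, that the bound \eqref{bound_w_3} used in deriving \eqref{dissip_v2} holds, and that \eqref{ode_final} is a genuine ODE in $X$.

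I expect the main obstacle to be the passage from the $s$-pointwise estimate to a bound in the $\sup_s$-norm of $X$: since \eqref{dissip_v2} holds only for each frozen $s$, I must check that the constants produced by Lemma~\ref{useit} are $s$-independent and that the initial data is controlled uniformly by $\|v(0)\|_X$. This is also the origin of the word \emph{strictly}: the per-$s$ bounds only converge to $r_0$ as $t\to\infty$, so the supremum over $s$ may approach $r_0$ from above without attaining it, and one cannot claim in general that the closed ball of radius exactly $r_0$ is absorbing.
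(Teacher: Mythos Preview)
Your proof is correct and follows exactly the approach indicated in the paper: apply Lemma~\ref{useit} parts \ref{item1} and \ref{item2} to the pointwise-in-$s$ inequality \eqref{dissip_v2} with $a=2\nu$, $b=2\nu^2\k0^2[\tfrac{|g|}{|f|}G+c_T(\ln eN)^{1/2}G^2(6+\beta)\beta]$, and $\varepsilon=\k0^{-1}$. You supply the details the paper leaves implicit, in particular the passage from the per-$s$ estimate to the $X$-norm via the $s$-independence of the constants, and a clean explanation of why the radius must be taken \emph{strictly} greater than $r_0$. One tiny quibble: the Poincar\'e inequality $\|v\|\le\k0^{-1}|Av|$ is not ``valid for every $v\in H$'' but only for $v\in D(A)$; this is harmless here since $v(t,s)\in P_NH\subset D(A)$.
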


\begin{remark} \label{smallerball}
The $O(G^2)$ estimate (up to a logarithm) for the radius of the absorbing ball for \eqref{ode_final} in \eqref{rad_2}  is significantly larger than the  $O(G)$ estimate for radius of the
absorbing ball for the NSE.  These estimates can be brought closer if we assume $N$ is larger.
Indeed, if we take for some $\gamma > 1$
\begin{equation} \label{bigNcond}
N \ge (6 c_T G (\ln e N)^{1/2})^{\gamma}\;,
\end{equation}
then
$$
\beta(G,N) \le { 9 N^{{1 /\gamma}}/(6c_T) + 1 \over N- N^{{1 \over \gamma}}/2}
\le {3N^{{1 \over \gamma}} + 2 \over N} \le 5 N ^{{1-\gamma \over \gamma}}\;,$$
and consequently
$$
G (\ln e N)^{1/2}\beta(G,N) \le {5 \over 6c_T}N^{{2-\gamma\over \gamma}}\;.
$$
Thus, if equality holds in \eqref{bigNcond} with $\gamma=3/2$, we have (up to a logarithm) that $r_0=O(G^{3/2})$, and
if \eqref{bigNcond} holds with
$\gamma=2$, we have $r_0=O(G)$.
\end{remark}

Let $\calB_r$ be an absorbing ball  in $X$, centered at $0$ with radius $r>r_0$, and let $\mathcal M=\cap_{t>0}S_{\text{DF}}(t)\calB_r$.  Note that $\mathcal M$ is
independent of $r$: $v_0\in{X}$ belongs to $\mathcal{M}$ if and only if there is
a bounded solution $v:(-\infty,\infty)\rightarrow{X}$ of \eqref{ODE} defined
on the whole real line such that $v(0)=v_0$.   We can also characterize $\mathcal M$
as the maximal bounded set that is invariant (i.e.  $S_{\text{DF}} (t)\cM=\cM$ $ \forall t \ge 0$), both forward and backward in time, under
the flow generated by  \eqref{ode_final}.    We do not know if  $\cM$ is the analog for the determining
form  \eqref{ode_final} of the global attractor of the Navier-Stokes equations (see discussion leading to
\eqref{attractordef}).

By Theorem~\ref{absorbing_ball1} $\mathcal{M}$ is a subset of $\calB_{r_0}$.

\begin{lemma}\label{bound}
If $v$ is a bounded solution of \eqref{ODE} defined on the whole real line and $\|W(v(t))\|_Y\le\nu\k0\sqrt{2G/c_L}$ for all $t$ then $\|v(t)\|_X\le 3\nu\k0 G$ for all $t$. In particular this is true if $N\geq{}\sqrt{2\csubL}(9c_TG^{3/2}(\ln{eN})^{1/2}+G^{1/2})$ and $N\geq 6\csubL G$.
\end{lemma}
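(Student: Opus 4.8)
The plan is to show that wherever the enstrophy $\|v(t,s)\|$ reaches the level $3\nu\k0 G$ at which the cutoff $\varphi$ in \eqref{varphidef} vanishes, the right-hand side of \eqref{ode_final} drives it back down, and then to invoke Lemma~\ref{useit}. Fix $s\in\bR$ and regard $t\mapsto v(t,s)$ as a trajectory in $P_NH$ governed by \eqref{ode_final}, writing $g=P_Nf$ and $w=W(s;v(t,\cdot))$; the hypothesis reads $\|w\|\le\nu\k0\sqrt{2G/\csubL}$ uniformly in $s,t$. On the region where $\|v(t,s)\|\ge 3\nu\k0 G$ we have $\varphi=0$, so the nonlinear term collapses to $P_NB(w,w)$ and the equation becomes $\partial_t v+\nu Av+P_NB(w,w)=g$. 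Taking the $H$-scalar product with $Av$ gives the enstrophy identity $\tfrac12\tfrac{d}{dt}\|v\|^2+\nu|Av|^2=(g,Av)-(B(w,w),Av)$.

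The heart of the matter is the bilinear term. By the orthogonality \eqref{flip}, $(B(w,w),Av)=-(B(w,Av),w)$, and \eqref{Lad} (with $u=w$ and second slot $Av$) bounds this by $\csubL|w|\|w\|\,|A^{3/2}v|$. Now I would use the two Poincar\'e-type inequalities available since $w\in Q_NH$ and $v\in P_NH$, namely $|w|\le\|w\|/(\k0 N)$ (this is \eqref{3.19.5}) and $|A^{3/2}v|\le\k0 N|Av|$; the two factors of $\k0 N$ cancel, leaving the clean bound $|(B(w,w),Av)|\le\csubL\|w\|^2|Av|$ with no surviving power of $N$. The hypothesis then gives $\csubL\|w\|^2\le 2\nu^2\k0^2 G$, while $(g,Av)\le|f||Av|=\nu^2\k0^2 G|Av|$. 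Hence, on $\{\|v(t,s)\|\ge 3\nu\k0 G\}$,
\[
\frac{d}{dt}\|v(t,s)\|^2\le -2\nu|Av(t,s)|^2+6\nu^2\k0^2 G\,|Av(t,s)|.
\]

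I would then apply Lemma~\ref{useit} with $x=\|v(\cdot,s)\|$, $y=|Av(\cdot,s)|$, $a=2\nu$, $b=6\nu^2\k0^2 G$ and $\varepsilon=1/\k0$ (the relation $x\le\varepsilon y$ being $\|v\|\le|Av|/\k0$, valid since $\lambda_0=\k0^2$). The threshold is $\varepsilon b/a=3\nu\k0 G$, which is exactly the level at which $\varphi$ vanishes; this coincidence is what makes the argument work, because the differential inequality above is only available on $\{x\ge\varepsilon b/a\}$, precisely where $\varphi=0$. Since $v$ is a bounded solution on all of $\bR$, part~\ref{item3} of Lemma~\ref{useit} (through part~\ref{item1}) yields $\|v(t,s)\|\le 3\nu\k0 G$ for every $t$, and taking the supremum over $s$ gives $\|v(t)\|_X\le 3\nu\k0 G$. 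The point needing care is exactly this region issue: one must note that if $\|v(t_\ast,s)\|>3\nu\k0 G$ then, by the sign of the right-hand side, $\|v(\cdot,s)\|$ stays above $3\nu\k0 G$ for all earlier times, so the inequality does hold on the whole half-line $(-\infty,t_\ast]$ and Lemma~\ref{useit} legitimately produces a contradiction with boundedness. I expect this cancellation of the powers of $N$ in the bilinear estimate, which lands the threshold precisely at $3\nu\k0 G$, to be the only genuinely delicate step.

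For the ``in particular'' clause I would simply verify the hypothesis from the a~priori bound already proved. By \eqref{bound_w_3}, $\|W(v(t))(s)\|\le\nu\k0 G\beta(G,N)$ with $\beta(G,N)$ as in \eqref{notation_bound_w} and $|h|/|f|\le 1$ (here $h=Q_Nf$). The condition $N\ge 6\csubL G$ gives $N-3\csubL G\ge N/2$, whence $G\beta(G,N)\le(18c_TG^2(\ln eN)^{1/2}+2G)/N$, and a direct computation shows that the second condition $N\ge\sqrt{2\csubL}\,(9c_TG^{3/2}(\ln eN)^{1/2}+G^{1/2})$ is exactly what forces this last quantity to be at most $\sqrt{2G/\csubL}$. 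Thus $\|W(v(t))(s)\|\le\nu\k0\sqrt{2G/\csubL}$ for all $s,t$, and the first part of the lemma applies.
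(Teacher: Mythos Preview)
Your proof is correct and follows essentially the same route as the paper's: fix $s$, note that where $\|v(t,s)\|\ge 3\nu\k0 G$ the cutoff vanishes so only $B(w,w)$ survives, take the scalar product with $Av$, use \eqref{flip} and \eqref{Lad} together with the Poincar\'e inequalities $|w|\le\|w\|/(\k0 N)$ and $|A^{3/2}v|\le\k0 N|Av|$ to land the bilinear estimate at $\csubL\|w\|^2|Av|$, and then invoke Lemma~\ref{useit}(i),(iii) to contradict boundedness. Your verification of the ``in particular'' clause via $N\ge 6\csubL G\Rightarrow N-3\csubL G\ge N/2$ and the explicit bound on $G\beta(G,N)$ is also exactly how the stated numerical conditions arise.
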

\begin{proof}
Let $w(t)=W(v(t))$ and $w(t,s)=w(t)(s)$. Suppose that $\|v(t',s')\|> 3\nu\k0 G$ for some $t'$ and $s'$. Then since $v$ is continuous there are $t_0<t'<t_1$ suth that in the interval $[t_0,t_1]$ we have $\|v(t,s')\|\geq 3\nu\k0 G$. On this interval the map $t\mapsto v(t,s')$ satisfies the equation
\[
\frac{d}{dt}v(t,s')+\nu Av(t,s')+B(w(t,s'))=g.
\]
Taking its scalar product with $Av(t,s')$ we obtain inequality
\[
\frac{d}{dt}\|v(t,s')\|^2+\nu|Av(t,s')|^2\leq|g||Av(t,s')|+|(B(w(t,s'),Av(t,s')),w(t,s'))|
\]
and using \eqref{Lad} we have that $|(B(w,Av),w)|\le\csubL|w|\|w\||A^{3/2}v|\le\csubL\|w\|_Y^2|Av|$, and hence
\[
\frac{d}{dt}\|v(t,s')\|^2+\nu|Av(t,s')|^2\leq(G\nu^2\k0^2+\csubL\|w\|^2_Y)|Av(t,s')|.
\]
Note that by Lemma~\ref{useit} (i) $t_0=-\infty$, since $G\nu\k0+\csubL\|w(t)\|^2/(\nu\k0)<3G\nu\k0$, but by the Lemma~\ref{useit} (iii) this is impossible.
\end{proof}
\begin{remark}
It is easy to give an analytic characterization of the solutions of \eqref{ode_final} in $\cM$. Indeed we have
\begin{equation*}
v(t)=e^{-\nu(t-t_0)A}v(t_0)+\int^t_{t_0}e^{-\nu(t-\theta)A}[g-P_NB(\tilde{v}(\theta)+W(v(\theta)))]d\theta,
\end{equation*}
and letting $t_0\rightarrow-\infty$ we easily obtain the relation
\begin{equation}\label{star}
v(t)=\int^t_{-\infty}e^{-\nu(t-\theta)A}[g-P_NB(\tilde{v}(\theta)+W(v(\theta)))]d\theta\quad{t>0},
\end{equation}
in $X$. Clearly \eqref{star} implies that $v(t)$ is in $\cM$.

In the case when
\[
v(t,s)=P_Nu(t+s),\quad{s,t\in\mathbb{R}},
\]
where $u(t)$ is a solution of \eqref{NSE} in $\cA$, it is easy to see that the relation \eqref{star} is equivalent to
\begin{equation}\label{doublestar}
v(s)=\int^s_{-\infty}e^{-\nu(s-\sigma)A}[g-P_NB(\tilde{v}(\sigma)+W(v(\sigma)))]d\sigma\quad{s\in\mathbb{R}}.
\end{equation}
It follows that
\[
v(s)=P_Nu(s),\quad{s\in\mathbb{R}},
\]
where $u(s)$ is a solution of \eqref{NSE} in $\cA$ if and only if $v\in{}X$ and satisfies (in $H$) the relation \eqref{doublestar}.
\end{remark}

\section{Traveling waves}\label{travwavesec}
We say that $v(t,s)=S_{\text{DF}}(t)v_0(s)$ is a traveling wave solution for \eqref{ode_final}  if
\begin{equation}\label{eq_t1}
v(t,s)=v_0(t+s).
\end{equation}
Denote $w(t,s)=W(s;v(t,\cdot))$,
${u}(t,s)=v(t,s)+w(t,s)$, and define $w_0\in{Y}$,
${u}_0\in{}C_b(\mathbb{R},H)$ by $w_0(s)=w(0,s)$, ${u}_0(s)={u}(0,s)$.
Suppose $v$ is a traveling wave solution.  Then by
\eqref{eq_t1}, \eqref{commute_translation}
$w(t,s)=w_0(t+s)$, thus $u(t,s)=u_0(t+s)$.  We
will provide an equation for $u_0$ and use it to derive the
properties of translation invariant solutions.
Using \eqref{eq_t1}, equation \eqref{ode_final}
can be written as
\begin{equation}\label{de_v}
\frac{d}{dt}v_0+\nu{}Av_0=P_Nf-P_NB(\varphi{}v_0+w_0),
\end{equation}
while \eqref{NSEw} is written as
\begin{equation}\label{de_w}
\frac{d}{ds}w_0+\nu{}Aw_0=Q_Nf-Q_NB(\varphi{}v_0+w_0).
\end{equation}
Changing $t$ to $s$ in \eqref{de_v} and combining the result with \eqref{de_w}, we have the equation for $u_0$
\begin{equation}\label{de_u}
\frac{d}{ds}u_0+\nu{}Au_0=f-B(\varphi{}v_0+w_0).
\end{equation}
Taking the scalar product with $Au_0$, we have
\begin{equation}\label{energy_u}
\frac{1}{2}\frac{d}{ds}\|u_0\|^2+\nu|Au_0|^2=(f,Au_0)-(B(\varphi{}v_0+w_0),Au_0).
\end{equation}
Using \eqref{moveu} and \eqref{ortho}, we obtain
\begin{equation*}
\begin{aligned}
(B(\varphi{}v_0+w_0),Au_0)
&=\varphi[(B(v_0,w_0),Av_0)+(B(w_0,v_0),Av_0)]+(B(w_0,w_0),Av_0)\\
+&\varphi^2(B(v_0,v_0),Aw_0)+\varphi[(B(v_0,w_0),Aw_0)+(B(w_0,v_0),Aw_0)]\\
&=(\varphi^2-\varphi)(B(v_0,v_0),Aw_0)+(1-\varphi)(B(w_0,w_0),Av_0)\;,
\end{aligned}
\end{equation*}
so by applying \eqref{moveu} again, we have
\begin{equation*}\label{BAllEndAll}
\begin{aligned}
|(B(\varphi{}v_0&+w_0),Au_0)|\le \varphi|(B(v_0,v_0),Aw_0)|+|(B(w_0,w_0),Av_0)|\\
 &\le \varphi|(B(v_0,w_0),Av_0)|+\varphi|(B(w_0,v_0),Av_0)|+|(B(w_0,w_0),Av_0)|\;.
\end{aligned}
\end{equation*}
We now apply \eqref{titi}, \eqref{3.0}, \eqref{bound_w_X} to obtain
\begin{equation*}
\begin{aligned}
\frac{1}{2}\frac{d}{ds}\|u_0\|^2+\nu|Au_0|^2 &\leq{}
|f||Au_0|+c_T(6+\beta(G,N))\nu^2\kappa_0^2G^2\beta(G,N)(\ln{eN})^{1/2}|Av_0|\\
&\leq{}
\nu^2\kappa_0^2G[ 1+c_T(6+\beta(G,N))G\beta(G,N)(\ln{eN})^{1/2}]|Au_0|\end{aligned}
\end{equation*}
and consequently by Lemma~\ref{useit} (iii)
\begin{equation}\label{5.13}
\|u_0(s)\|\leq{}\nu\kappa_0G[ 1+c_T(6+\beta(G,N))G\beta(G,N)(\ln{eN})^{1/2}] \;.
\end{equation}
We next calculate an absolute constant $c'_T$ such that if
\begin{equation}\label{constcond}
N \ge c'_T G\beta(G,N) \ln{eN}\;,
\end{equation}
then
$$
c_T(6+\beta(G,N))G\beta(G,N)(\ln{eN})^{1/2} \le 1\;,
$$
so that \eqref{5.13} implies
\begin{equation}\label{GivesProj2}
\|v_0(s)\|\leq{}\|u_0(s)\|\leq{}2\nu\kappa_0G\;.
\end{equation}
First note that for $c'_T \ge 18c_T$, we have $\beta(G,N) \le 2$, for all $N \ge 1$.
Then estimate
\begin{align*}
8c_TG\beta(G,N)(\ln{eN})^{1/2} &= 8c_T\frac{9c_TG \ln{eN} + ( \ln{eN})^{1/2}|h|/|f|}{N-3c_TG(\ln{eN})^{1/2}} \\
& \le 8c_T\frac{9 c_T N/c'_T+N/c'_T}{N-3c_TN/c'_T}= \frac{72 c_T^2 + 8c_T}{c'_T-3c_T}\;,
\end{align*}
so that it suffices in \eqref{constcond} to take
\begin{equation}\label{cvalue}
c'_T\ge72c_T^2+11c_T\;.
\end{equation}

\begin{theorem}
\label{inv}
 Suppose \eqref{constcond} holds for $c'_T$ as in \eqref{cvalue}.  Then a solution v(t) of \eqref{ode_final} is traveling wave solution if and only if v(0) is a projection of a solution of \eqref{NSE} in $\mathcal {A}$.
\end{theorem}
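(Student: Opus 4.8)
The plan is to prove the two implications separately, and in both to lean on the a priori bound \eqref{GivesProj2} derived just above the statement, which forces the cutoff $\varphi$ to equal $1$ along any traveling wave and thereby collapses the determining form \eqref{ode_final} onto the genuine Navier-Stokes system \eqref{NSE}.

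For the forward implication, I would start from a traveling wave $v(t)$, so that \eqref{eq_t1} holds and $w(t,s)=w_0(t+s)$, $u(t,s)=u_0(t+s)$ as in the preceding computation. The estimates culminating in \eqref{GivesProj2} give $\|v(0)\|_X=\sup_s\|v_0(s)\|\le 2\nu\k0 G$, which is exactly the standing hypothesis of Lemma \ref{GivesProj} (its relation \eqref{vshift} being our \eqref{eq_t1}). Invoking that lemma produces $u_0(t)=v(0,t)+W(t;v(0,\cdot))$ as a solution of \eqref{NSE} with $P_Nu_0(t)=v(0,t)$. The only thing left is membership in $\cA$, and this is immediate from the characterization \eqref{attractordef}: $u_0$ is defined for all $t\in\bR$, and \eqref{GivesProj2} bounds $\sup_t\|u_0(t)\|\le 2\nu\k0 G<\infty$. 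Hence $v(0,\cdot)=P_Nu_0(\cdot)$ is the projection onto $P_NH$ of a solution in $\cA$.

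For the converse I would fix a complete trajectory $u(\cdot)\subset\cA$ with $v(0,s)=P_Nu(s)$ and test the explicit candidate $\tilde v(t,s)=P_Nu(t+s)$, which obviously obeys \eqref{eq_t1}. The goal is to show $\tilde v$ is \emph{the} solution of \eqref{ode_final} issuing from $v(0)$, after which uniqueness settles everything. That $\tilde v$ solves \eqref{ode_final} is essentially the content of \eqref{2.4a}--\eqref{2.6}: the trajectory $\tilde v(t)=P_Nu(t+\cdot)$ satisfies \eqref{2.6}, and since $u(t+s)\in\cA$ yields $\|\tilde v(t)\|_X\le\nu\k0 G$ by \eqref{attbnd}, one has $\varphi=1$, so \eqref{2.6} is literally \eqref{ode_final}. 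Here I would also record, so as not to conflate two maps, that the enslavement map in \eqref{2.6} agrees with the extended $W$ of Lemma \ref{3.1}: the bounded function $s\mapsto Q_Nu(t+s)$ solves \eqref{NSEw} with $h=Q_Nf$ and $\varphi=1$, hence equals $W(\tilde v(t))$ by the uniqueness asserted in Lemma \ref{3.1}. Now $\tilde v(0)=P_Nu(\cdot)=v(0)$, and \eqref{ode_final} is an ODE with globally Lipschitz vector field (Theorem \ref{ODE}), so $v(t)=\tilde v(t)$; consequently $v(t,s)=P_Nu(t+s)=v(0,t+s)$, a traveling wave.

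The conceptual weight is carried by the estimate \eqref{GivesProj2} --- already in hand --- that pins $\varphi$ to $1$; given it, the theorem is an assembly. Within the argument itself I expect the one genuinely error-prone point to be the identification of the enslavement map appearing in \eqref{2.6} with the extended operator $W$ of Lemma \ref{3.1}, since the former was initially defined only on projections of attractor solutions; the uniqueness in Lemma \ref{3.1} disposes of this cleanly. The remaining ingredients --- ODE uniqueness from Theorem \ref{ODE} and the attractor characterization \eqref{attractordef} --- are standard.
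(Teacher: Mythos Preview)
Your forward implication is correct and matches the paper's: \eqref{GivesProj2} feeds Lemma \ref{GivesProj}, and the characterization \eqref{attractordef} places the resulting global bounded solution in $\cA$.

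The converse, however, has a genuine gap. You assert that $\tilde v(t)=P_Nu(t+\cdot)$ solves \eqref{ode_final} by appealing to \eqref{2.4a}--\eqref{2.6}, but those relations only give the \emph{pointwise} identity $\partial \tilde v(t,s)/\partial t = F(\tilde v(t))(s)$ for each fixed $s$. To invoke ODE uniqueness in $X$ (Theorem \ref{ODE}) you need $\tilde v$ to be a $C^1$ curve in $X$, i.e.
\[
\sup_{s\in\bR}\Big\|\frac{P_Nu(t+\Delta t+s)-P_Nu(t+s)}{\Delta t}-\frac{d}{dt}P_Nu(t+s)\Big\|\longrightarrow 0
\quad\text{as }\Delta t\to 0,
\]
which is a \emph{uniform}-in-$s$ statement that does not follow from pointwise differentiability alone. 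The paper flags this explicitly (note the scare quotes around ``differential equation'' after \eqref{2.5b}) and supplies the missing ingredient: solutions in $\cA$ extend to analytic $H$-valued functions on a strip $|\Im t|<c$, so $d^2(P_Nu)/dt^2$ is uniformly bounded on $\bR$, and Taylor's theorem with remainder yields the required uniform limit. Without such a uniform second-derivative bound (or an equivalent device), your candidate $\tilde v$ is not known to be a solution of \eqref{ode_final} in the ODE sense, and the uniqueness argument cannot be applied.
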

\begin{proof}
The fact that any traveling wave solution of \eqref{ode_final} is the
projection of a solution of \eqref{NSE} follows from \eqref{GivesProj2} and Lemma \ref{GivesProj}.  To prove the
converse, recall that any solution $u(t)$ of
the \eqref{NSE} which lies on the attractor can be extended to a solution to the complexified
NSE which is analytic and
bounded on some strip $|\Im t|<c$ function into $H$ (see e.g. \cite{CF88}). Let $u(t)$ be a solution of \eqref{NSE} defined and
bounded on the whole real line.  Consider a curve in $X$,
$t\mapsto{}(s\mapsto{\mp(t+s)})$, where $\mp=P_Nu$. We need to show that this curve is
differentiable and its derivative is $s\mapsto{d\mp/dt(t+s)}$; thus we need to prove that for any real $t$
\begin{equation}\label{needthis}
\sup_{s\in\mathbb{R}}\|\frac{\mp(t+\Delta{t}+s)-\mp(t+s)}{\Delta{t}}-\frac{d}{dt}\mp(t+s)\|\rightarrow{0}
\quad \text{as} \quad \Delta{t} \to 0\;.
\end{equation}
 Since $\mp$
is analytic and bounded in the strip $|\Im\zeta|<c$, its second derivative is
also bounded in a smaller strip.    The limit in \eqref{needthis} now follows from
Taylor's theorem, specifically
\begin{equation*}
\sup_{s\in\mathbb{R}}\|\frac{\mp(t+\Delta{t}+s)-\mp(t+s)}{\Delta{t}}-\frac{d}{dt}\mp(t+s)\|
\le \sup_{s \in \bR} \Delta t \sup_{\tau \in [t,t+t+\Delta t]}\|\frac{d^2}{dt^2} \mp(\tau+s)\|\;.
\end{equation*}

\end{proof}

\begin{remark}\label{rem}
Note that  a function $v(t,s-t)$ being independent of $t$ is
equivalent to $v(t,s)=v(0,s+t)$, for all $s,t$; thus if $\|v(0)\|_X \le 2\nu\k0 G$, then
$v(t)$ is the $P_N$-projection of a solution on the global attractor  of the Navier-Stokes equations.
\end{remark}

\section{Equicontinuity of $W$}\label{equisec}

We have from \eqref{bound_Aw} and \eqref{general_bound} that
\begin{equation}
\int^{s_1}_{s_0}|Aw|^2\leq{}E_1(s_1-s_0)+E_0,
\end{equation}
for some constants $E_0,E_1$.  We also have for any $u \in H$
\begin{equation*}
\begin{aligned}
 |(B(\varphi{v}+w),u)| \leq{}9c_BG^2\nu^2\kappa_0^2(\ln{eN})^{1/2}|u| &+ 3(\csubA+c_T) \nu^2\kappa_0^2G^2\beta(G,N) (\ln eN)^{1/2}|u| \\
 &+\csubA\nu\kappa_0{G\over N}\beta(G,N) |Aw||u| \;;
\end{aligned}
\end{equation*}
thus
\begin{equation}
|B(\varphi{v}+w)|\leq{}E_2+E_3|Aw|
\end{equation}
where
\begin{equation*}
\begin{aligned}
E_2=&[9c_B+ 3(c_B+c_T) \beta(G,N) ]\nu^2\k0^2G^2 \beta(G,N) (\ln eN)^{1/2}\\
E_3=&\csubA\nu\kappa_0{G\over N}\beta(G,N) \;.
\end{aligned}
\end{equation*}
Integrating \eqref{NSEw} from $s_0$ to $s_1$ we have
\begin{equation*}
\begin{aligned}
|w(s_1)-w(s_0)|\leq{}&\nu\int^{s_1}_{s_0}|Aw|+\int^{s_1}_{s_0}|B(\varphi{}v+w,\varphi{}v+w)|+\int_{s_0}^{s_1} |h| \\
\leq{}&(\nu+E_3)(s_1-s_0)^{1/2}[E_1(s_1-s_0)+E_0]^{1/2}+(E_2+|h|)(s_1-s_0)
\end{aligned}
\end{equation*}

We have proved the following
\begin{lemma}
The family $\{W(v)(\cdot)|v\in{C_b(\mathbb{R},P_NH)}\}$ is equicontinuous on
$\mathbb{R}$.
\end{lemma}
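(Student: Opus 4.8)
The plan is to read equicontinuity directly off a modulus-of-continuity estimate for $w=W(v)$ that is \emph{uniform} in the argument $v\in X$. The starting point is the integrated form of \eqref{NSEw}: for $s_0<s_1$,
\begin{equation*}
w(s_1)-w(s_0)=\int_{s_0}^{s_1}\bigl[h-Q_NB(\varphi v(s)+w(s))-\nu Aw(s)\bigr]\,ds,
\end{equation*}
so that $|w(s_1)-w(s_0)|\le \nu\int_{s_0}^{s_1}|Aw|+\int_{s_0}^{s_1}|B(\varphi v+w)|+|h|(s_1-s_0)$. I would estimate the three contributions separately, keeping track at every step of the fact that no constant depends on $v$.

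The key ingredients are the $v$-independent bounds already in hand. Lemma~\ref{3.1} (through \eqref{bound_w_X}) and \eqref{Aw_bound} give uniform pointwise control of $\|w\|$ and, when $\|h\|<\infty$, of $|Aw|$; combined with the cap \eqref{3.0} on $\varphi\|v\|$ these yield the nonlinear bound $|B(\varphi v+w)|\le E_2+E_3|Aw|$ with $E_2,E_3$ depending only on $G,N$ (and the fixed $h$). For the terms linear in $|Aw|$ I would use Cauchy--Schwarz in time together with the local-in-time $L^2$ estimate $\int_{s_0}^{s_1}|Aw|^2\le E_1(s_1-s_0)+E_0$, whose constants $E_0,E_1$ are again uniform in $v$ (these come from \eqref{bound_Aw} and \eqref{general_bound}). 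Collecting, $(\nu+E_3)\int_{s_0}^{s_1}|Aw|\le (\nu+E_3)(s_1-s_0)^{1/2}\bigl[E_1(s_1-s_0)+E_0\bigr]^{1/2}$, which together with the remaining $(E_2+|h|)(s_1-s_0)$ produces
\begin{equation*}
|w(s_1)-w(s_0)|\le (\nu+E_3)(s_1-s_0)^{1/2}\bigl[E_1(s_1-s_0)+E_0\bigr]^{1/2}+(E_2+|h|)(s_1-s_0).
\end{equation*}

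To finish, I would observe that the right-hand side tends to $0$ as $s_1-s_0\to 0$ and, crucially, does so \emph{at a rate independent of $v$}, since $E_0,E_1,E_2,E_3$ and $|h|$ are all uniform; here the factor $(s_1-s_0)^{1/2}$ is what forces the first term to vanish even though the additive constant $E_0$ does not. Thus, given $\epsilon>0$, one can pick a single $\delta>0$ so that $|s_1-s_0|<\delta$ forces $|w(s_1)-w(s_0)|<\epsilon$ for \emph{every} $v$, which is precisely equicontinuity of the family. The main obstacle is not the assembly above but securing genuine $v$-independence of the constants: everything rests on the uniform bounds of Lemma~\ref{3.1} and on the local $L^2$-in-$s$ integrability of $Aw$, so I would check that the standing condition \eqref{gn_condition4} (and the $G/N$ smallness behind \eqref{bound_A_N}) holds and that $h$, hence $f$, is fixed throughout, so that the index set of the family is exactly the set of $v$'s and the constants truly see no $v$.
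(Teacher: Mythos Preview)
Your proof is correct and follows essentially the same route as the paper: integrate \eqref{NSEw}, bound $|B(\varphi v+w)|\le E_2+E_3|Aw|$ using \eqref{3.0} and \eqref{bound_w_X}, control $\int|Aw|$ via Cauchy--Schwarz and the uniform local $L^2$ bound $\int_{s_0}^{s_1}|Aw|^2\le E_1(s_1-s_0)+E_0$ coming from \eqref{bound_Aw} and \eqref{general_bound}, and arrive at exactly the modulus $(\nu+E_3)(s_1-s_0)^{1/2}[E_1(s_1-s_0)+E_0]^{1/2}+(E_2+|h|)(s_1-s_0)$. The one superfluous remark is the appeal to the pointwise bound \eqref{Aw_bound} on $|Aw|$ (which needs $\|h\|<\infty$); the argument here only uses the $L^2$-in-time control, so that hypothesis is not needed for this lemma.
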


Let $v(t)$ be a solution of \eqref{ode_final}, and denote $w(s,t)=W(v(t))(s)$.  Then
by Lemma~\ref{w_lipschitz}
\[
\|w(s,t)-w(s,t_0)\|\leq{}L_W\|v(t)-v(t_0)\|_X.
\]
 Since $v(t)$ is a solution of an ODE, it is continuous.  Thus for any finite
interval $[-T,T]$ there is a constant $C_T$, such that
\[
\|v(t)-v(t_0)\|_X\leq{}C_T|t-t_0|,\quad{}t,t_0\in{}[-T,T].
\]
This proves
\begin{proposition}\label{equi}
For any solution $v(t)$ of the equation~\eqref{ode_final} the function
$w(s,t)=W(v(t))(s)$ is equicontinuous on $\mathbb{R}^2$. Moreover, there are constants $c,c_1$ and $C_T$  such that
\[
|w(s,t)-w(s_0,t_0)|\leq{}c|s-s_0|^{1/2}+c_1|s-s_0|+\frac{C_T}{N}|t-t_0|,
\]
whenever $t,t_0\in[-T,T]$.
\end{proposition}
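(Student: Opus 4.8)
The plan is to control $|w(s,t)-w(s_0,t_0)|$ by separating the two variables through the triangle inequality,
\[
|w(s,t)-w(s_0,t_0)|\le |w(s,t)-w(s_0,t)|+|w(s_0,t)-w(s_0,t_0)|\,,
\]
and to bound the two pieces by two mechanisms that are already in hand: the equicontinuity in $s$ established in the preceding lemma for the first term, and the Lipschitz property of $W$ combined with the regularity of the trajectory $t\mapsto v(t)$ for the second.

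For the $s$-increment, observe that for each fixed $t$ the function $s\mapsto w(s,t)=W(v(t))(s)$ belongs to the family $\{W(v)(\cdot):v\in X\}$, since $v(t)\in X$. Hence the increment estimate displayed just before the preceding lemma applies verbatim with $v$ replaced by $v(t)$, and for $0<s-s_0<1$ gives a bound of the form $(\nu+E_3)(s-s_0)^{1/2}[E_1(s-s_0)+E_0]^{1/2}+(E_2+|h|)(s-s_0)$. Using $\sqrt{a+b}\le\sqrt a+\sqrt b$ this is at most $c\,|s-s_0|^{1/2}+c_1\,|s-s_0|$ with $c=(\nu+E_3)E_0^{1/2}$ and $c_1=(\nu+E_3)E_1^{1/2}+E_2+|h|$. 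The essential point is that $E_0,\dots,E_3$ do not depend on the input, so this bound is uniform in $t$ and in $s_0$, which is exactly the uniformity that equicontinuity demands.

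For the $t$-increment I would invoke Corollary~\ref{w_lipschitz}, which gives the Lipschitz bound $\|W(v(t))(s_0)-W(v(t_0))(s_0)\|\le L_W\|v(t)-v(t_0)\|_X$ in the $V$-norm; converting to the $H$-norm by \eqref{3.19.5} yields $|w(s_0,t)-w(s_0,t_0)|\le \frac{L_W}{\kappa_0 N}\|v(t)-v(t_0)\|_X$. It then remains to bound $\|v(t)-v(t_0)\|_X$. Since $v$ solves the ordinary differential equation \eqref{ode_final}, whose vector field $F$ is globally Lipschitz and which keeps $v$ bounded, the derivative $\|dv/dt\|_X=\|F(v(t))\|_X$ is bounded on $[-T,T]$; hence $\|v(t)-v(t_0)\|_X\le C\,|t-t_0|$ there, with $C=\sup_{\tau\in[-T,T]}\|F(v(\tau))\|_X$. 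Collecting the factors $L_W C/\kappa_0$ into a single constant $C_T$ while keeping the explicit $1/N$ produced by \eqref{3.19.5} gives $|w(s_0,t)-w(s_0,t_0)|\le \frac{C_T}{N}\,|t-t_0|$.

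Adding the two bounds produces the stated inequality, and the uniformity of the three moduli (in $s_0$ for the $s$-part, and for $t,t_0$ ranging over the fixed interval $[-T,T]$ for the $t$-part) yields equicontinuity on $\bR^2$. I do not expect a genuine obstacle, since both ingredients are established earlier; the only delicate point is the norm bookkeeping, namely that Corollary~\ref{w_lipschitz} is naturally phrased in the $V$-sup norm, so that \eqref{3.19.5} must be used both to descend to the $H$-norm of the statement and to expose the explicit factor $1/N$, together with the routine check that the trajectory Lipschitz constant $C$ is finite on $[-T,T]$, which follows at once from the global Lipschitz continuity of $F$ and the boundedness of $v$.
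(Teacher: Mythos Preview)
Your proof is correct and follows essentially the same approach as the paper: the paper likewise combines the $s$-equicontinuity from the preceding lemma with the Lipschitz bound from Corollary~\ref{w_lipschitz} together with the ODE-Lipschitz continuity of $t\mapsto v(t)$ on $[-T,T]$. You are simply more explicit than the paper about the triangle-inequality decomposition and about converting the $V$-norm estimate to the $H$-norm via \eqref{3.19.5} to expose the $1/N$ factor.
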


We now assume that \eqref{gn_condition4}
holds, and that $h \in D(A^{1/2})$.  Then by \eqref{Aw_bound} we have
\begin{equation}\label{AWCNbound}
|AW(v)(s)| \le \Gamma;, \quad \text{for all} \ v \in X, \ s \in \bR\;,
\end{equation}
where $\Gamma$ is the right handside of \eqref{Aw_bound}.  We have therefore that
\begin{equation}\ \label{enhancedequi}
 \begin{aligned}
\|&w(s,t)-w(s_0,t_0)\| \le (2\Gamma)^{1/2}|w(s,t)-w(s_0,t_0)|^{1/2} \\
  &\le  (2\Gamma)^{1/2}
 \left[c|s-s_0|^{1/2}+c_1|s-s_0|+\frac{C_T}{N}|t-t_0| \right]^{1/2}
 \end{aligned}
\end{equation}
for all $s_0,s_1 \in \bR$, and all $t,t_0 \in [-T,T]$.


Recall the extension of the Arzela-Ascoli theorem to the case of Banach-valued functions.
\begin{theorem}\label{AAext0}
Let $K$ be a compact metric space, $\cX$ a Banach space, $\Phi_n:k \mapsto \Phi_n(k) \in \cX$,
with $\{\Phi_n\}_{n=1}^{\infty}$ equicontinuous and  $\{\Phi_n(k)\}_{n=1}^{\infty}$
relatively compact for each $k \in K$.   Then there exists a subsequence $\{\Phi_{n_j}\}_{j=1}^{\infty}$
which converges in $C(K;\cX)$.
\end{theorem}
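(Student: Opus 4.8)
The plan is to run the classical three-$\ep$ Arzel\`a--Ascoli argument, adapted to a Banach-valued target, in three stages: extract by a diagonal procedure a subsequence that converges on a countable dense subset of $K$; promote this pointwise convergence to uniform convergence on all of $K$ using equicontinuity together with compactness; and finally invoke completeness of $C(K;\cX)$ to produce a genuine limit.

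First I would use that a compact metric space is separable, fixing a countable dense subset $D=\{k_1,k_2,\dots\}\subset K$. For the diagonal extraction I exploit the hypothesis that $\{\Phi_n(k_1)\}_{n=1}^\infty$ is relatively compact in $\cX$; since $\cX$ is a metric space, relative compactness yields a subsequence $\{\Phi_n^{(1)}\}$ along which $\Phi_n^{(1)}(k_1)$ converges in $\cX$. Applying relative compactness again at $k_2$ on this subsequence, and inductively at each $k_m$, produces nested subsequences, and the diagonal subsequence $\{\Phi_{n_j}\}_{j=1}^\infty$ then converges in $\cX$ at every point of $D$.

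The heart of the argument is upgrading this to uniform convergence. Given $\ep>0$, equicontinuity of $\{\Phi_n\}$ supplies a $\delta>0$, \emph{independent of $n$}, with $\|\Phi_n(k)-\Phi_n(k')\|<\ep/3$ whenever $d(k,k')<\delta$. By compactness finitely many balls of radius $\delta$ cover $K$, and by density I may select one point of $D$ in each ball, obtaining a finite set $\{k_{i_1},\dots,k_{i_m}\}\subset D$ that is a $\delta$-net for $K$. Since the diagonal subsequence converges, hence is Cauchy, at each of these finitely many points, there is $J$ with $\|\Phi_{n_j}(k_{i_\ell})-\Phi_{n_p}(k_{i_\ell})\|<\ep/3$ for all $j,p\ge J$ and all $\ell$. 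For arbitrary $k\in K$, choosing $k_{i_\ell}$ with $d(k,k_{i_\ell})<\delta$ and splitting
\[
\|\Phi_{n_j}(k)-\Phi_{n_p}(k)\|\le \|\Phi_{n_j}(k)-\Phi_{n_j}(k_{i_\ell})\|+\|\Phi_{n_j}(k_{i_\ell})-\Phi_{n_p}(k_{i_\ell})\|+\|\Phi_{n_p}(k_{i_\ell})-\Phi_{n_p}(k)\|
\]
bounds the left side by $\ep$, uniformly in $k$. Thus $\{\Phi_{n_j}\}$ is uniformly Cauchy on $K$.

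Finally, because $\cX$ is complete, $C(K;\cX)$ equipped with the sup-norm is a Banach space, so the uniformly Cauchy sequence $\{\Phi_{n_j}\}$ converges in $C(K;\cX)$ to a limit $\Phi$, automatically continuous as a uniform limit of the continuous maps $\Phi_{n_j}$. The only point requiring genuine care is arranging the $\delta$-net to lie inside $D$---this is precisely where density is used---so that the diagonal subsequence, which a priori converges only on $D$, is Cauchy at every net point; everything else is routine triangle-inequality bookkeeping. I expect no essential obstacle beyond this careful coordination of the separability, equicontinuity, and pointwise relative-compactness hypotheses.
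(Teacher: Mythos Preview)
Your proof is correct and follows the standard Arzel\`a--Ascoli argument. Note, however, that the paper does not actually prove this theorem: it is stated as a recalled classical result (``Recall the extension of the Arzela-Ascoli theorem to the case of Banach-valued functions'') with no proof given, so there is nothing to compare your approach against beyond observing that you have supplied a complete and standard justification where the paper simply cites the result.
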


\section{Weak limit of time translated solutions} \label{weaksec}

According to Remark \ref{rem} (Section 7), a solution $v$ to the determining form
\eqref{ode_final} in the set $\mathcal{M}$ is the $P_N$-projection of a solution of the Navier-Stokes equations \eqref{NSE} in
its global attractor $\cA$ if and only if
\begin{equation} \label{bndforatt}
\|v(t,s)\| \le 2\nu\k0^2 G\;, \quad \forall \ t,s \in \bR\;,
\end{equation}
and
$$
v(t+t_n,s-(t+t_n))=v(0,s)\;, \quad \forall \ t,s \in \bR\;,
$$
for any sequence $\{t_n\}$.  It is therefore instructive to study the asymptotic behavior
of a sequence
\begin{equation}\label{vndef}
v_n(t,s)=v(t+t_n,s-(t+t_n))\;, \quad n=1,2,\ldots
\end{equation}
where $t_n \to \infty$, and $v(\cdot)\in \cM$ is a solution of \eqref{ode_final}
satisfying \eqref{bndforatt}.   We have the following lemma,

\begin{lemma}\label{definitionbar}

There is a subsequence $\{n_j\}$ and functions $\bar v$, $\bar w$ and $\bar R$ such that $\bar v,\bar R\in C(\mathbb{R},L^\infty(\mathbb{R},H)_{\text{weak}^*})$, $\bar w\in C(\mathbb{R}^2,V)$ and the following hold for each $T,S$
\begin{equation}\label{wlim1}
\lim_j\sup_{t\in[-T,T]}|(v_{n_j}(t,\cdot)-\bar v(t,\cdot),\xi)|=0 \quad\text{ for }\xi\in L^1(\mathbb{R},H)
\end{equation}
\begin{equation}\label{wlim2}
\lim_j\sup_{t\in[-T,T]}|(B(v_{n_j}(t,\cdot)-\bar v(t,\cdot))+\bar R(t,\cdot),\xi)|=0 \quad\text{ for }\xi\in L^1(\mathbb{R},H)
\end{equation}
\begin{equation}\label{wlim3}
w_{n_j}(t,s) =W(\tau_{t_{n_j}}v)(s-(t+t_{n_j})) \to \bar w(t,s) \quad (t,s \in \bR\;)
\end{equation}
in $C([-T,T]\times[-S,S];V)$, for all $T,S > 0$.
\end{lemma}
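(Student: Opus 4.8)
The plan is to obtain the three limits by Arzel\`a--Ascoli arguments, reducing everything to three ingredients: uniform bounds, equicontinuity in the evolution variable $t$ (uniform in $n$), and pointwise-in-$t$ compactness in the appropriate topology; a single subsequence is then produced by diagonalization over $T,S\to\infty$. First I would record the bounds and a key reduction. Since $v\in\cM$ satisfies \eqref{bndforatt}, we have $\varphi\equiv1$ along the relevant trajectories, so $v_n(t,\cdot)$ is bounded, say by $R$, in $L^\infty(\bR,P_NH)$ uniformly in $n$ (and, as $P_NH$ is finite dimensional, in all the norms on it). Using the translation identity \eqref{commute_translation} one checks the clean relation $w_n(t,\cdot)=W\bigl(v_n(t,\cdot)\bigr)$, i.e.\ $w_n(t,\cdot)$ is the image of $v_n(t,\cdot)$ under the \emph{same} Lipschitz map $W:X\to Y$; moreover $v_n$ solves the transport form of \eqref{ode_final},
\begin{equation}\label{transport}
(\partial_t+\partial_s)v_n+\nu Av_n+P_NB(v_n+w_n)=P_Nf .
\end{equation}
Because $v\in\cM\subset\calB_{r_0}$ is bounded and $F$ is globally Lipschitz (hence bounded on bounded sets), $\tfrac{d}{dt}v=F(v)$ yields $\|v(t)-v(t')\|_X\le M|t-t'|$ with $M$ \emph{independent of} $t$; this global-in-$t$ Lipschitz bound is what renders the equicontinuity estimates below uniform in $n$ despite $t_n\to\infty$. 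Assuming $h=Q_Nf\in D(A^{1/2})$, estimate \eqref{AWCNbound} bounds $w_n$ in $D(A)$ uniformly.

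I would next prove \eqref{wlim3} and \eqref{wlim1}. For $w_n$: the $D(A)$-bound gives pointwise relative compactness in $V$ (compact embedding $D(A)\hookrightarrow V$), and \eqref{enhancedequi} combined with the global Lipschitz bound on $v$ gives joint equicontinuity of $(t,s)\mapsto w_n(t,s)$ in the $V$-norm, uniform in $n$. Theorem~\ref{AAext0} applied on each $[-T,T]\times[-S,S]$, followed by a diagonal argument, produces a subsequence with $w_{n_j}\to\bar w$ in $C(\bR^2;V)$ uniformly on compacta, giving \eqref{wlim3} and $\bar w\in C(\bR^2,V)$. For $v_n$ the target is the ball of $L^\infty(\bR,H)=(L^1(\bR,H))^*$, which is weak$^*$ compact and metrizable ($L^1(\bR,H)$ being separable), so pointwise compactness is automatic and only equicontinuity in $t$ must be checked. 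This follows by transferring the $s$-shift onto the test function: with $\theta=t+t_n$, $\theta'=t'+t_n$ and the substitution $\sigma=s-\theta$,
\[
\bigl(v_n(t,\cdot)-v_n(t',\cdot),\xi\bigr)=\bigl(v(\theta,\cdot),\tau_{\theta}\xi-\tau_{\theta'}\xi\bigr)+\bigl(v(\theta,\cdot)-v(\theta',\cdot),\tau_{\theta'}\xi\bigr),
\]
where the first term is $\le R\,\|\tau_{t-t'}\xi-\xi\|_{L^1(\bR,H)}$ (translation is $L^1$-continuous, uniformly in $\theta$) and the second is $\le RM|t-t'|\,\|\xi\|_{L^1}$. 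Both tend to $0$ as $t\to t'$ uniformly in $n$, so $t\mapsto v_n(t,\cdot)$ is equicontinuous into the weak$^*$ ball; Arzel\`a--Ascoli and diagonalization give \eqref{wlim1} with $\bar v\in C(\bR,L^\infty(\bR,H)_{\mathrm{weak}^*})$.

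The main obstacle is \eqref{wlim2}: since $v_{n_j}\rightharpoonup\bar v$ only weak$^*$, the quadratic term $B(v_{n_j})$ need not converge to $B(\bar v)$, and the defect is exactly $\bar R$ (a Reynolds stress). Uniformity in $t$ is delicate because $v(\theta,\cdot)$ has \emph{no} $s$-modulus of continuity uniform in $\theta$, so a direct estimate of $B(v_n(t))-B(v_n(t'))$ is unavailable. The device is to test against smooth $\xi$ and transfer the $t$-shift to $\xi$ \emph{before} differentiating: with the same substitution $\sigma=s-\theta$,
\[
\bigl(B(v_n(t,\cdot)),\xi\bigr)=\int B\bigl(v(\theta,\sigma)\bigr)\cdot\xi(\sigma+\theta)\,d\sigma ,
\]
and since only the time slot of $v$ and the shifted $\xi$ depend on $\theta=t+t_n$,
\[
\tfrac{d}{dt}\bigl(B(v_n(t,\cdot)),\xi\bigr)=\bigl(B(g_n,v_n)+B(v_n,g_n),\xi\bigr)+\bigl(B(v_n(t,\cdot)),\xi'\bigr),
\]
where $g_n(t,s)=(\partial_t v)(\theta,s-\theta)=F(v(\theta,\cdot))(s-\theta)$ is bounded in $L^\infty(\bR,H)$ uniformly in $n$ by \eqref{transport}. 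Crucially, \emph{no} $s$-regularity of $v$ is used, and the right-hand side is $\le C(\|\xi\|_{L^1}+\|\xi'\|_{L^1})$ uniformly in $n$ and $t$. Hence $t\mapsto(B(v_n(t,\cdot)),\xi)$ is uniformly Lipschitz for $\xi$ in the dense class $C^1_c(\bR,H)$, which with the uniform $L^\infty$-bound gives equicontinuity into the weak$^*$ ball. Weak$^*$ Arzel\`a--Ascoli and diagonalization then extract a subsequence along which $B(v_{n_j}(t,\cdot))\rightharpoonup\mu(t,\cdot)$ uniformly on $[-T,T]$, with $\mu\in C(\bR,L^\infty(\bR,H)_{\mathrm{weak}^*})$.

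Finally, expanding $B(v_{n_j}-\bar v)=B(v_{n_j})-B(v_{n_j},\bar v)-B(\bar v,v_{n_j})+B(\bar v)$ and setting $\bar R:=B(\bar v)-\mu$, the statement \eqref{wlim2} follows once the bilinear cross terms $B(v_{n_j},\bar v)$ and $B(\bar v,v_{n_j})$, which are linear in the weak$^*$-convergent factor, are shown to converge to $B(\bar v)$ uniformly in $t$; the continuity $\bar R\in C(\bR,L^\infty(\bR,H)_{\mathrm{weak}^*})$ is then inherited from the uniformity of the limit. The one genuinely technical point I would treat with care is precisely this uniform-in-$t$ convergence of the cross terms, since $\bar v$ carries only weak$^*$ regularity in $t$; I would handle it by applying the equicontinuity estimate of the previous paragraph directly to the combination $B(v_{n_j}-\bar v)$, using that $\bar v$, as the uniform weak$^*$ limit of the $v_{n_j}$, satisfies in the sense of distributions a transport equation with $L^\infty$ right-hand side and hence obeys the analogous Lipschitz-in-$t$ bound.
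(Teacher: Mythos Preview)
Your treatment of $v_n$ and $w_n$ is essentially the paper's: both use the split estimate transferring the $s$-shift onto the test function for $v_n$, and equicontinuity from Proposition~\ref{equi} (via \eqref{enhancedequi}) together with the $D(A)$-bound \eqref{AWCNbound} for $w_n$. The divergence is in the handling of the quadratic term.

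The paper dispatches \eqref{wlim2} in one line: it asserts that since $P_NH$ is finite dimensional, $B$ induces a \emph{continuous} map $L^\infty(\bR,P_NH)_{\mathrm{weak}^*}\to L^\infty(\bR,H)_{\mathrm{weak}^*}$, so composing with the already-established equicontinuous family $\{v_n-\bar v\}$ yields an equicontinuous family $\{R_n\}$, and Arzel\`a--Ascoli applies directly. This compositional shortcut is terse and its premise is delicate---quadratic maps do not preserve weak$^*$ limits in general (e.g.\ $p_n(s)=\sin(ns)(e_1+e_2)\rightharpoonup 0$ while $B(p_n)=\sin^2(ns)B_{\mathrm{sym}}(e_1,e_2)\rightharpoonup\tfrac12 B_{\mathrm{sym}}(e_1,e_2)\ne 0$).

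Your route is a genuine alternative: after the characteristic substitution $\sigma=s-\theta$ you differentiate $(B(v_n(t,\cdot)),\xi)$ in $t$ without ever invoking $\partial_\sigma v$, landing on the uniform bound $C(\|\xi\|_{L^1}+\|\xi'\|_{L^1})$ for $\xi\in C^1_c$; this exploits the transport structure of $v_n$ rather than any weak$^*$ continuity of $B$. Your proposed closing move---pass the transport equation to $\bar v$ via the distributional limit, then rerun the same characteristic differentiation for $v_n-\bar v$---works as stated: since $(\partial_t+\partial_s)\bar v\in L^\infty(\bR^2,P_NH)$ distributionally, for a.e.\ $\sigma$ the map $t\mapsto(v_n-\bar v)(t,\sigma+t)$ is Lipschitz with uniformly bounded derivative, and the identical computation yields the equicontinuity of $R_n$ against $C^1_c$ test functions (hence, by density and the uniform $L^\infty$ bound, against all of $L^1$). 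Your argument is thus more explicit about where the quadratic obstruction lives, at the cost of length; the paper's is shorter but, read literally, skates over a point your characteristic-variable trick actually resolves.
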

\begin{proof}
It follows from \eqref{ode_final} and \eqref{AWCNbound}
that  there exists a constant $C$
$$
|\frac{\partial v_n(t,s)}{\partial t} | \le C \quad (\forall \ t,s,n )
$$

Choose a dense subset $\{\xi_k\}$ in the unit ball of $L^1(\mathbb{R},H)$. Then it is well-known and easy to check that
\[
d(p,p')=\sum_k {|(p-p',\xi_k)| \over 2^k(1+|(p-p',\xi_k)|)}
\]
defines a distance in the ball of any fixed size in $L^\infty(\mathbb{R},H)$. Let us compute $d(v_n(t,\cdot),v_n(t',\cdot))$.
Note first that
\begin{equation*}
\begin{aligned}
|&\int_\mathbb{R}(v_n(t,s)-v_n(t',s),\xi_k(s))\ ds|\\
&= |\int_\mathbb{R}(v(t+t_n,s-t-t_n)-v(t'+t_n,s-t'-t_n),\xi_k(s))\ ds| \\
&\le
C|t-t'|+|\int_\mathbb{R}(v(t+t_n,s-t-t_n)-v(t+t_n,s-t'-t_n),\xi_k(s))\ ds|\\
&=C|t-t'|+|\int_\mathbb{R}(v(t+t_n,s-t_n),\xi_k(s+t)-\xi_k(s+t'))| \ ds\\
&\le  C|t-t'|+3G\nu\k0 \|\xi_k-\tau_{t-t'}\xi_k\|_{L^1}
\end{aligned}
\end{equation*}
which, provided $k$ is fixed, goes to $0$ when $t-t'$ goes to $0$, uniformly
in $n$. It follows that for any fixed $\varepsilon$ there is a $\delta$ such
that $|t-t'|<\delta$ implies that $d(v_n(t,\cdot),v_n(t',\cdot))<\varepsilon$
for all $n$.  This means that the set of functions $t\mapsto v_n(t,\cdot)$ is an
equicontinuous family of functions in
$C([-T,T],L^\infty(\mathbb{R},H)_{\text{weak}^*})$
for any $T$. Note that $|v_n(t,s)|\le \|v\|_X/\k0$, therefore $v_n(t,\cdot)$ belongs to
the ball of radius $\|v\|_X/\k0$ in $L^\infty(\mathbb{R},H)$, which by the
Banach-Alaoglu Theorem is compact
in $L^\infty(\mathbb{R},H)_{\text{weak}^*}$. Using the Arzela-Ascoli Theorem
\ref{AAext0} and the diagonal process, we obtain a function $\bar v$ in
$C(\mathbb{R},L^\infty(\mathbb{R},H)_{\text{weak}^*})$ such that for some
subsequence of $\{v_{n_j}\}$, \eqref{wlim1} holds for any $T$. Without loss of
generality we assume that \eqref{wlim1} holds for the sequence $\{v_{n}\}$.

To deal with the sequence $R_n(t,\cdot)=-B(v_n(t,\cdot)-\bar v(t,\cdot))$ note
that it is a composition of an equicontinuous family $\{v_n(t,\cdot)-\bar
v(t,\cdot)\}$ in $C(\mathbb{R},L^\infty(\mathbb{R},P_NH)_{\text{weak}^*})$ and
a map $L^\infty(\mathbb{R},P_NH)_{\text{weak}^*}\to L^\infty(\mathbb{R},H)_{\text{weak}^*}$
induced by $B$.  Since $P_NH$ is finite dimensional, $B$ is a continuous
quadratic form on $P_NH$, and hence the induced map is continuous. It follows
that the family $R_n(t,\cdot)\in
C(\mathbb{R},L^\infty(\mathbb{R},H)_{\text{weak}^*})$
is also equicontinuous. Now note that by \eqref{A46b}
\begin{equation*}
\begin{aligned}
|B(v_n(t,s)-\bar v(t,s))|&
        \leq \csubA |v_n(t,s)-\bar v(s)|^{1/2}|Av_n(t,s)-A\bar
        v(s)|^{1/2}\|v_n(t,s)-\bar v(s)\| \\
& \leq\csubA(\k0 N)^2|v_n(t,s)-\bar v(s)|^2\le 2\csubA(\k0 N)^2\|v\|_X,
\end{aligned}
\end{equation*}
thus $R_n(t,\cdot)$ is in the ball of radius $2\csubA(\k0 N)^2\|v\|_X$ in $L^\infty(\mathbb{R},H)$. Therefore, again by Theorem \ref{AAext0} and the diagonal procedure, there exists $\bar R(t,\cdot)\in C(\mathbb{R},L^\infty(\mathbb{R},H)_{\text{weak}^*})$ and a subsequence such that \eqref{wlim2} holds for any $T$.  Once again, without loss of generality, we assume that \eqref{wlim1} and \eqref{wlim2} hold for the whole sequence.

Finally, $w_n$ is equicontinuous in $C(\mathbb{R}^2,V)$ by Proposition \ref{equi}, and for each $t,s$ $w_n(t,s)$ is precompact in $V$ by \eqref{AWCNbound}, thus, by Theorem \ref{AAext0}, \eqref{wlim3} holds for some subsequence $v_{n_j}$.
\end{proof}

The main result in this section is the following.

\begin{proposition} \label{convprop}
Suppose that $N$ and $G$ satisfy \eqref{gn_condition4}.
Let $\bar v$, $\bar w$ and $\bar R$ be as in Lemma~\ref{definitionbar}.
If $\bar v(t,s)$ and $\bar w(t,s)$ are both independent of $t$, then $\bar R$ is also independent of $t$ and
$$
\bar u(s)=\bar v(s) + \bar w(s)
$$
satisfies the Navier-Stokes equation
\begin{equation}\label{wstress}
\frac{d}{ds}\bar u(s)+\nu{A}\bar u(s)+B\left(\bar u(s),\bar u(s))\right)=f+\bar R(s)\;, \quad s \in \bR\;.
\end{equation}
\end{proposition}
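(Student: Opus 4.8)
The plan is to derive the evolution equations obeyed by the translated pieces $v_n$ and $w_n$, pass to the limit along the subsequence $\{n_j\}$ furnished by Lemma~\ref{definitionbar}, and read off both the $t$-independence of $\bar R$ and the Navier--Stokes equation \eqref{wstress} for $\bar u$ from the $P_N$- and $Q_N$-projected limit equations. First I record that, since $v\in\cM$ obeys \eqref{bndforatt}, we have $\varphi=1$ (cf.\ \eqref{varphidef}) for $v$ and for every translate, so $\varphi\equiv1$ and $\tilde v$ may be replaced by $v$ throughout. Writing $\theta=t+t_n$, $\sigma=s-t-t_n$, so that $v_n(t,s)=v(\theta,\sigma)$, $w_n(t,s)=w(\theta,\sigma)$ with $w(\theta,\cdot)=W(v(\theta,\cdot))$, and $u_n=v_n+w_n$, the pointwise-in-$s$ form of \eqref{ode_final} (an equation in the external time $\theta$) and the defining $\sigma$-equation \eqref{NSEw} for $W$ (with $h=Q_Nf$) yield, upon changing variables,
\begin{align*}
\partial_t v_n + \partial_s v_n + \nu A v_n + P_N B(u_n) &= P_N f, \\
\partial_s w_n + \nu A w_n + Q_N B(u_n) &= Q_N f .
\end{align*}
Here a subtlety must be flagged: a generic $v\in\cM$ need not be differentiable in the auxiliary variable $s$, so the first equation is to be read \emph{weakly} in $s$; what is actually used is that the combination $\partial_t+\partial_s$ equals the external-time derivative $\partial_\theta$, which is classical by \eqref{ode_final}. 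The second equation is classical in $s$ because $w$ solves \eqref{NSEw} and is a strong solution in $\sigma$; note that no $\partial_t w_n$ appears, since only the $\sigma$-equation for $W$ enters.

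Next I pass to the limit in these two identities, tested against products $\phi(t)\xi(s)$ with $\phi\in C_c^\infty(\mathbb R)$ and $\xi\in L^1(\mathbb R,H)$, using the convergences of Lemma~\ref{definitionbar}. The linear terms are handled distributionally: $\partial_s v_n\rightharpoonup\partial_s\bar v$ and $\partial_s w_n\to\partial_s\bar w$ follow from \eqref{wlim1} and \eqref{wlim3}; $A v_n\rightharpoonup A\bar v$ since $P_NH$ is finite-dimensional, while $Aw_n\rightharpoonup A\bar w$ because $|Aw_n|\le\Gamma$ by \eqref{AWCNbound} together with the strong $V$-convergence \eqref{wlim3}. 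The transport term disappears precisely because $\bar v$ is independent of $t$: integrating by parts in $t$, $\int\!\!\int\partial_t v_n\,\phi(t)\xi(s)=-\int\!\!\int v_n\,\phi'(t)\xi(s)\to-\bigl(\int\phi'\bigr)\bigl(\int\bar v\,\xi\bigr)=0$, so $\partial_t v_n\rightharpoonup0$.

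The heart of the matter, and the step I expect to be the main obstacle, is the limit of the quadratic term $B(u_n)$. I decompose $B(u_n)=B(v_n)+B(v_n,w_n)+B(w_n,v_n)+B(w_n)$. The three terms containing $w_n$ converge to $B(\bar v,\bar w),B(\bar w,\bar v),B(\bar w)$: in each, the strong convergence $w_n\to\bar w$ in $C(V)$ controls one factor and the weak-$*$ convergence $v_n\rightharpoonup\bar v$ the other, e.g.\ $B(v_n,w_n)-B(\bar v,\bar w)=B(v_n,w_n-\bar w)+B(v_n-\bar v,\bar w)$, estimated via \eqref{titi}. The genuinely quadratic term $B(v_n)$ does \emph{not} converge to $B(\bar v)$; its defect is exactly the Reynolds stress. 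Indeed the cross terms converge, $B(v_n,\bar v)\rightharpoonup B(\bar v)$ and $B(\bar v,v_n)\rightharpoonup B(\bar v)$ (they are linear in $v_n$, with $\bar v$ fixed), so by the definition \eqref{wlim2} of $\bar R$,
\[
B(v_n)=B(v_n-\bar v)+B(v_n,\bar v)+B(\bar v,v_n)-B(\bar v)\;\rightharpoonup\;B(\bar v)-\bar R .
\]
Summing, $B(u_n)\rightharpoonup B(\bar u)-\bar R$ in the weak-$*$ sense, with $\bar u=\bar v+\bar w$.

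Passing to the limit in the two displayed equations then gives, distributionally in $s$,
\begin{align*}
\partial_s\bar v + \nu A\bar v + P_N B(\bar u) - P_N\bar R &= P_N f, \\
\partial_s\bar w + \nu A\bar w + Q_N B(\bar u) - Q_N\bar R &= Q_N f .
\end{align*}
In the first identity every term other than $P_N\bar R$ is $t$-independent (using that $\bar v$, hence $\bar u$, is independent of $t$), so $P_N\bar R$ is independent of $t$; the second identity likewise forces $Q_N\bar R$ to be independent of $t$ (using the $t$-independence of $\bar w$ and $\bar u$), whence $\bar R=P_N\bar R+Q_N\bar R$ is independent of $t$. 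Adding the two projected equations and using $P_N+Q_N=I$ and $\bar u=\bar v+\bar w$ produces
\[
\frac{d}{ds}\bar u + \nu A\bar u + B(\bar u,\bar u) = f + \bar R ,
\]
which is \eqref{wstress}. The only points requiring genuine care are the bilinear limits of the third paragraph (the interplay of weak-$*$ and strong convergence and the identification of the nonlinear defect with $\bar R$) and the justification that $Aw_n\rightharpoonup A\bar w$ from the a priori bound \eqref{AWCNbound}.
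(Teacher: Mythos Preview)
Your argument is correct and takes a genuinely different route from the paper's. The paper works with the \emph{mild} (variation-of-constants) formulations
\[
w_n(t,s)=(\nu A)^{-1}h-\int_{-\infty}^{s}e^{-\nu(s-\sigma)A}Q_NB(v_n+w_n)\,d\sigma,
\qquad
v_n(t,s)=\int_{-\infty}^{t}e^{-\nu(t-\tau)A}\bigl[g-P_NB(\cdots)\bigr]\,d\tau,
\]
passes to the limit inside these integrals (with the same bilinear decomposition you use), and then recovers the differential equation for $\bar u$ by differentiating the resulting integral identities, invoking the vector-valued Lebesgue differentiation theorem and a careful a.e.\ argument in the $(t,s)$-plane to extract the $t$-independence of $\bar R$. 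You instead test the \emph{differential} equations for $(v_n,w_n)$ against products $\phi(t)\xi(s)$ and pass to the limit distributionally; the key observation that the nondifferentiable pieces $\partial_t v_n$ and $\partial_s v_n$ only appear through the classical combination $(\partial_t+\partial_s)v_n=\partial_\theta v$ is exactly what makes this legitimate.

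What each approach buys: your route is shorter and avoids the bookkeeping of shifted arguments in the double integrals and the Lebesgue-differentiation step. The paper's integral approach, on the other hand, yields as a by-product that $\bar v$ agrees a.e.\ with a continuous function (from the integral representation \eqref{Lebesgue}); in your approach this regularity of $\bar v$ (and the upgrade of \eqref{wstress} from distributional to a.e.) follows only a posteriori, from the fact that the right-hand side of the limit $P_N$-equation lies in $L^\infty(\bR,H)$. Two small points worth making explicit in your write-up: (i) the bound $|Aw_n|\le\Gamma$ you invoke requires $h=Q_Nf\in V$, which is the standing assumption of Section~\ref{equisec} carried into Section~\ref{weaksec}; (ii) the convergence of the cross terms $B(v_n-\bar v,\bar w)$, $B(\bar v,v_n-\bar v)$ uses that the pairing can be rewritten as $(v_n-\bar v,\psi)$ with $\psi\in L^1(\bR,H)$, so that \eqref{wlim1} applies --- this is exactly what the paper does as well.
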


The function $\bar R$ can be viewed as the analog of the Reynolds stress due to averaging
the Navier-Stokes equations in the theory of turbulence \cite{FMRT,FJMR}
Thus, $\bar u(\cdot)$ is a solution
to the Navier-Stokes equations \eqref{NSE} {\it with the original force $f$},
if and only if $\bar v(t,s)$ and
$\bar w(t,s)$ are independent of $t$, and $\bar R(s)=0$.  We do not know if any of these conditions
are superfluous.

\begin{proof}
Using a change of variables, we write
\begin{equation}\label{wnstart}
\begin{aligned}
w_n(t,s)&=w (t+t_n,s-(t+t_n))=W(\tau_{t_n}v)(s-(t+t_n)) \\
             &= \int_{-\infty}^{s-(t-t_n)} e^{-\nu(s-t-t_n-\sigma)A}
                   \left[h-Q_NB(v(t+t_n,\sigma)+w(t+t_n,\sigma))\right] \ d \sigma \\
             &= (\nu A)^{-1}h-\int_{-\infty}^{s} e^{-\nu(s-\sigma)A}
                   \left[Q_NB(v_n(t,\sigma)+w_n(t,\sigma))\right] \ d \sigma \;.
\end{aligned}
\end{equation}
Now write
\begin{equation}\label{wintform}
\begin{aligned}
&\bar w(s) -(\nu A)^{-1}h + \int_{-\infty}^s e^{-\nu(s-\sigma)A} Q_N [B(\bar v(\sigma)+\bar w(\sigma))-\bar R(t,\sigma)] d \sigma\\
&= \bar w(s) - w_n(t,s) - \\ & \int_{-\infty}^{s} e^{-\nu(s-\sigma)A} Q_N \left[B(v_n(t,\sigma)+w_n(t,\sigma))-B(\bar v(\sigma)+\bar w(\sigma))-B(v_n(t,\sigma)-\bar v(\sigma)) \right]\ d \sigma \\
& -\int_{-\infty}^s e^{-\nu(s-\sigma)A} Q_N \left[B(v_n(t,\sigma)-\bar v(\sigma))+\bar R(t,\sigma)\right] \ d \sigma \;.
\end{aligned}
\end{equation}
Notice that
\begin{equation}\label{Bexp}
\begin{aligned}
& B(v_n+w_n)-B(\bar v+\bar w)-B(v_n-\bar v)= \\ & B_{\text{sym}}(v_n+\bar w,w_n-\bar w)+B(w_n-\bar w)+B_{\text{sym}}(\bar v+\bar w,v_n-\bar v)
\end{aligned}
\end{equation}
where
$$
B_{\text{sym}}(u_1,u_2)=B(u_1,u_2)+B(u_2,u_1)\;.
$$
Using \eqref{A46b}, \eqref{AWCNbound} and \eqref{A46a} as well, we find that
\begin{align*}
|B(v_n+\bar w,w_n-\bar w)| &\le \csubA |v_n+\bar w|^{1/2} |A(v_n+\bar w)|^{1/2}\|w_n - \bar w\| \\
&\le \frac{\csubA}{\k0}\left(|Av_n|+|A\bar w|\right)\|w_n - \bar w\|\\
&\le \csubA \left( N\|v\|_X+\frac{\Gamma}{\k0}\right) \|w_n - \bar w\|\;.
\end{align*}
\begin{align*}
|B(w_n-\bar w,v_n+\bar w)| &\le \csubL |w_n - \bar w|^{1/2}\|w_n - \bar w\|^{1/2}\|v_n+\bar w\|^{1/2} |A(v_n+\bar w)|^{1/2} \\
&\le \frac{\csubL}{\k0 N^{1/2}}\left(|Av_n|^2+|A\bar w|^2\right)^{1/2} \|w_n- \bar w\| \\
&\le \frac{\csubL}{\k0 N^{1/2}}\left(\k0N\|v_n\|_X+|A\bar w|\right)\|w_n - \bar w\|\\
&\le \frac{\csubL}{\k0 N^{1/2}} \left( \k0N\|v\|_X+\Gamma \right)\|w_n - \bar w\| \;,
\end{align*}
\begin{align*}
|B(w_n-\bar w)| &\le \csubA |w_n - \bar w|^{1/2}|A(w_n - \bar w)|^{1/2} \|w_n - \bar w\|\\
&\le \frac{\csubA}{\k0 N} |A(w_n - \bar w)| \|w_n- \bar w\| \\
&\le \frac{2\csubA}{\k0 N}\Gamma \|w_n - \bar w\| \;,
\end{align*}
so that for each $t$
\begin{equation*}\label{3Best}
\int^s_{-\infty} e^{-\nu(s-\sigma)A}Q_N[B_{\text{sym}} (v_n+\bar w, w_n-\bar w)+ B(w_n-\bar w)] \ d \sigma \rightarrow 0 \quad \text{as} \ n \to \infty\;,
\end{equation*}
by the Lebesgue dominated convergence theorem. Indeed, the integrand is bounded by
\begin{equation}\label{bbb1}
2\tilde{\tilde{\Gamma}}\|w\|_Y e^{-\nu(s-\sigma)\k0^2}
\end{equation}
where
$$ \tilde{\tilde{\Gamma}} =(\csubA  N+\csubL N^{1/2})\|v\|_X + \left( \frac{\csubA}{\k0}+
 \frac{\csubL}{\k0 N^{1/2}} + \frac{2\csubA}{\k0 N} \right) \Gamma\;,
 $$
and goes to $0$ at each $\sigma$ since $\|w_n(t,\sigma)-\bar w(\sigma)\|\rightarrow 0$ when $n\rightarrow\infty$.

Let $k\in D(A)$. Note that the function $\sigma\mapsto  e^{-\nu(s-\sigma)A}k$ if $\sigma\le s$ and $\sigma\mapsto 0$ if $\sigma>s$ is in $L^1(\mathbb{R},H)$.
Thus by \eqref{wlim2}
\begin{equation}
\int_{-\infty}^s e^{-\nu(s-\sigma)A}(Q_N(B(v_n(t,\sigma)-\bar v(\sigma))+\bar R(t,\sigma)),k)\ d\sigma \to 0 \quad \text{as} \ n \to \infty\;.
\end{equation}
We also have
\begin{align*}
(\int_{-\infty}^s e^{-\nu(s-\sigma)A}
Q_N B(\bar v &+\bar w,v_n-\bar v) \ d \sigma,k)  \\
= &
\int_{-\infty}^s
Q_N (B(\bar v+\bar w, e^{-\nu(s-\sigma)A}k) ,v_n-\bar v) \ d \sigma \;.
\end{align*}
But by \eqref{A46b} we have that, as a function of $\sigma$
$$
B(\bar v(\sigma)+\bar w(\sigma), e^{-\nu(s-\sigma)A}k) \in L^{1}((-\infty,s],H)\;.
$$
We conclude that
\begin{equation*}\label{Bsym1}
\int_{-\infty}^s (e^{-\nu(s-\sigma)A}
Q_N B(\bar v +\bar w,v_n-\bar v) ,k)\ d \sigma \to 0 \quad \text{as} \ n \to \infty\;.
\end{equation*}
For the other half of $B_{\text{sym}}(\bar v+\bar w,v_n-\bar v)$ we write
$$
(e^{-\nu(s-\sigma)A}B(v_n-\bar v,\bar v+\bar w),k)=(v_n-\bar v,\theta(\sigma))
$$
where (due to \eqref{A46c}) we have
$$
|\theta(\sigma)| \le \|\bar v(\sigma)+\bar w(\sigma)\|  e^{-\nu(s-\sigma)\k0}|k|^{1/2}|Ak|^{1/2}  \quad  \forall \ \sigma \le s\;.
$$
It follows that $\theta(\sigma) \in L^{\infty}((-\infty,s])$, and hence
\begin{equation*}\label{Bsym1}
\int_{-\infty}^s (e^{-\nu(s-\sigma)A}
Q_N B(v_n-\bar v,\bar v +\bar w) ,k)\ d \sigma \to 0 \quad \text{as} \ n \to \infty\;.
\end{equation*}
In particular, for $k \in D(A)$, \eqref{wintform} now implies that
\begin{align*}\label{equal0w}
&(\bar w(s)-(\nu A)^{-1}h+\int_{-\infty}^s e^{-\nu(s-\sigma)A}Q_N(B(\bar v(\sigma)+\bar w(\sigma))-\bar R(t,\sigma)) d\sigma,k)=0
\end{align*}

Since $D(A)$ is dense in $H$, we deduce that for each $t$
\begin{equation}\label{wbarfinal}
\bar w(s)= (\nu A)^{-1}h-\int_{-\infty}^s e^{-\nu(s-\sigma)A}Q_N(B(\bar v(\sigma)+\bar w(\sigma))-\bar R(t,\sigma)) d\sigma \;.
\end{equation}
Note that \eqref{wbarfinal} implies that $Q_N\bar R(t,s)$ does not depend on $t$. Indeed, by the vector-valued version of the Lebesgue differentiation theorem we obtain that for each $t$ $Q_N\bar R(t,s)=r(s)$ for almost all $s$, fo some function
$r \in C(\mathbb{R},L^{\infty}(\mathbb{R},Q_NH)_{\text{weak}^*})$, which means that $Q_N\bar R$ is constant as a function in $C(\mathbb{R},L^{\infty}(\mathbb{R},H)_{\text{weak}^*})$.

Now observe that
\begin{align*}
v_n(t,s)&=v(t+t_n, s-t-t_n))\\
&=\int_{-\infty}^{t+t_n} e^{-\nu(t+t_n-\tau)A}
[g-P_NB(v(\tau,s-t-t_n)+w(\tau,s-t-t_n))]\ d\tau \\
&=\int_{-\infty}^t e^{-\nu(t-\tau)A}
[g-P_NB(v(\tau+t_n,s-t-t_n)+w(\tau+t_n,s-t-t_n))]\ d\tau \\
&=\int_{-\infty}^t e^{-\nu(t-\tau)A}
[g-P_NB(v_n(\tau,s+\tau-t)+w_n(\tau,s+\tau-t))]\ d\tau
\end{align*}
for all $t,s \in \bR$; consequently
\begin{equation}\label{vnstart}
\begin{aligned}
\int_{S_1}^{S_2} &(v_n(t,s),k)  \ ds = ((\nu A)^{-1} g, k)(S_2-S_1) \\
&-
\int_{-\infty}^t \left[ \int_{S_1}^{S_2} (B(v_n(\tau, s+\tau-t)+w_n(\tau, s+\tau-t)),
e^{-\nu(t-\tau)A}P_Nk)\ d s\right] \ d \tau \\
=&((\nu A)^{-1} g, k)(S_2-S_1) \\
&-
\int_{-\infty}^t \left[ \int_{S_1+\tau-t}^{S_2+\tau-t} (B(v_n(\tau, s)+w_n(\tau, s)),
e^{-\nu(t-\tau)A}P_Nk)\ d s\right] \ d \tau \;,&
\end{aligned}
\end{equation}
where $k \in D(A)$, $S_1 < S_2$ are fixed (but otherwise arbitrary).  By
the convergence in \eqref{wlim2} we can now pass to the limit in \eqref{vnstart}
by going through steps analogous to those from \eqref{wnstart} to \eqref{wbarfinal}.
Indeed, if $k$ is an eigenvector of $A$ with $Ak=\lambda k$, then by \eqref{Bexp}
\begin{equation*}
\begin{aligned}
\int_{-\infty}^t &\left[ \int_{S_1+\tau-t}^{S_2+\tau-t} (B(v_n(\tau, s)+w_n(\tau, s)),
e^{-\nu(t-\tau)A}P_Nk)\ d s\right] \ d \tau \\
=&\int_{-\infty}^t \left[ \int_{S_1+\tau-t}^{S_2+\tau-t} (B(\bar v+\bar w)-\bar R,
e^{-\nu(t-\tau)A}P_Nk)\ d s\right] \ d \tau\\
+&\int_{-\infty}^t \left[ \int_{S_1+\tau-t}^{S_2+\tau-t} (B_{\text{sym}}(v_n+\bar w,w_n-\bar w)+B(w_n-\bar w),
e^{-\nu(t-\tau)A}P_Nk)\ d s\right] \ d \tau\\
+&\int_{-\infty}^t  e^{-\nu(t-\tau)\lambda}\left[ \int_{S_1+\tau-t}^{S_2+\tau-t} (B_{\text{sym}}(\bar v+\bar w,v_n-\bar v), P_Nk)\ d s\right] \ d \tau \\
+&
\int_{-\infty}^t  e^{-\nu(t-\tau)\lambda}\left[ \int_{S_1+\tau-t}^{S_2+\tau-t} (\bar R+B(v_n-\bar v), P_Nk)\ d s\right] \ d \tau
\end{aligned}
\end{equation*}
On the right-hand side the second, third and fourth terms go to $0$ as $n\to \infty$. In the second term we use a bound similar to \eqref{bbb1} and the Lebesgue dominated convergence theorem. For the third and fourth note first that the integrands are bounded by $L^1$ functions, thus it suffices to show that for fixed $\tau$ the integrals
\[
\int_{S_1+\tau-t}^{S_2+\tau-t} (B_{\text{sym}}(\bar v+\bar w,v_n-\bar v),k) \ d s
\]
and
\[
\int_{S_1+\tau-t}^{S_2+\tau-t} (\bar R+B(v_n-\bar v), k)\ d s
\]
go to $0$ as $n\to \infty$. For the latter, note that the function $\chi_{[S_2+\tau-t,S_1+\tau-t](s)}k$ is in $L^1(\mathbb{R},H)$, where $\chi_I$ is the characteristic function of an interval $I$,  and use \eqref{wlim2}. For the former, write $(B_{\text{sym}}(\bar v+\bar w,v_n-\bar v),k)=(v_n(t,s)-\bar v(s),\psi(s))$ and note that, as above, by \eqref{A46b} and \eqref{A46c}  $\chi_{[S_2+\tau-t,S_1+\tau-t]}(s)\psi(s)$ is in $L^1(\mathbb{R},H)$ and therefore we obtain the convergence by \eqref{wlim1}.

Setting $\bar u=\bar v + \bar w$, we obtain
\begin{equation*}\label{vn2nd}
\begin{aligned}
&\int_{S_1}^{S_2} (\bar v(s),k)  \ ds -((\nu A)^{-1} g, k)(S_2-S_1) \\
&=-\int_{-\infty}^t \left[ \int_{S_1+\tau-t}^{S_2+\tau-t} (B(\bar u(s))-\bar R(\tau,s),e^{-\nu(t-\tau)A}P_Nk)  \ ds \right] \ d \tau \\
&=-\int_{-\infty}^t \left[ \int_{S_1}^{S_2} (B(\bar u(s+\tau-t))-\bar R(\tau,s+\tau-t),e^{-\nu(t-\tau)A}P_Nk)  \ ds \right] \ d \tau \\
&=-\int_{S_1}^{S_2}   (\int_{-\infty}^t e^{-\nu(t-\tau)A}P_N[B(\bar u(s+\tau-t))-\bar R(\tau,s+\tau-t) ] \ d \tau,k)  \ ds \\
&=-(\int_{S_1}^{S_2}   \left\{\int_{-\infty}^s e^{-\nu(s-\sigma)A}P_N[B(\bar u(\sigma))-\bar R(t-s+\sigma,\sigma) ] \ d \sigma \right\}   \ ds,k) \;.
\end{aligned}
\end{equation*}

Since for fixed $S_1 < S_2$, $t$ in $\bR$, $k$ can run over all
linear combinations of eigenvectors of $A$, of which are dense in $H$, we deduce
\begin{equation*}\label{vbarsemi}
\int_{S_1}^{S_2}\left\{ \bar v(s)-\int_{-\infty}^s e^{-\nu(s-\sigma)A}
[g-P_N B(\bar u(\sigma))+P_N\bar R(t-s+\sigma,\sigma)]\ d \sigma\right\} \ d s = 0
\end{equation*}
for all $S_1<S_2$, $t$ in $\bR$.

The final function in braces above is in $L^1([-S,S],H)$ for all $S>0$.
Therefore, the vector-valued version of Lebesgue's differentiation theorem now implies
\begin{equation}\label{Lebesgue}
\bar v(s)= \int_{-\infty}^s e^{-\nu(s-\sigma)A}[g-P_NB(\bar u(\sigma))+P_N\bar R(t-s+\sigma,\sigma) \ dt ]\ d \sigma
\end{equation}
 for $s\not\in\Sigma_t$, where the measure of $\Sigma_t$ is $0$.
Note that since  $C(\mathbb{R},L^\infty(\mathbb{R},H))_{\text{weak}^*}\subset L^1_{\text{loc}}(\mathbb{R}^2)$, both the right- and left-hand sides of \eqref{Lebesgue} are in $L^1_{\text{loc}}(\mathbb{R}^2)$, and therefore by Fubini's theorem \eqref{Lebesgue} holds for almost all pairs $(t,s)\in\mathbb{R}^2$. By substituting $(t,s)$ with $(t+\tau,s+\tau)$ we obtain that for almost all $t,s$
\begin{equation}\label{Lebesgue01}
\bar v(s+\tau)= \int_{-\infty}^{s+\tau} e^{-\nu(s+\tau-\sigma)A}[g-P_NB(\bar u(\sigma))+ P_N\bar R(t-s+\sigma,\sigma) \ dt ]\ d \sigma
\end{equation}
holds for almost all $\tau$. Fix such $t,s$ that \eqref{Lebesgue01} holds for almost all $\tau$. Then the right-hand side of \eqref{Lebesgue01} is continuous in $\tau$ and hence $\bar v$ coincides almost everywhere with continuous function. Thus, we can suppose that $\bar v$ is (absolutely) continuous. Taking the derivative with respect to $\tau$ of \eqref{Lebesgue01}, we obtain that for almost all $(t,s)\in\mathbb{R}^2$,
\begin{equation*}\label{vbarODE01}
\frac{d \bar v(s+\tau)}{ds} + \nu A \bar v(s+\tau) + P_NB(\bar u(s+\tau),\bar u(s+\tau))= g +P_N{\bar R}(t+\tau,s+\tau)
\;,
\end{equation*}
for almost all $\tau$. It follows that
\begin{equation*}\label{vbarODE1}
\frac{d \bar v(s)}{ds} + \nu A \bar v(s) + P_NB(\bar u(s),\bar u(s))= g +P_N{\bar R}(t,s)
\;,
\end{equation*}
for almost all $t,s$. We therefore obtain that the function $\bar R\in C(\mathbb{R},L^\infty(\mathbb{R},H)_{\text{weak}^*})$ is almost everywhere equal to some fixed element of $L^\infty(\mathbb{R},H)$. Since $\bar R$ is continuous, this equality holds everywhere, thus $\bar R$ does not depend on $t$.
\end{proof}

\section{Stationary solutions of \eqref{ode_final}} \label{statsec}
Recall that $u_0\in H$ is stationary for \eqref{NSE} if
\[
\nu{}Au_0+B(u_0,u_0)=f.
\]
Clearly, if
\begin{equation}\label{statproj}
v(0,\cdot)=P_Nu_0(\cdot)\;,
\end{equation}
then $v(t,\cdot)=P_Nu_0(\cdot)$ for all $t$ is a stationary
solution of \eqref{ode_final}.  In fact by Theorem \ref{inv} any stationary solution $v(t,s)$ of
\eqref{ode_final} which is constant in $s$ must satisfy \eqref{statproj}.
The set of all stationary solutions of \eqref{ode_final} may, however,  be much larger
than the set of constant functions in $X$ that are the projections of the stationary solutions of \eqref{NSE} as in \eqref{statproj}.  In fact we show in \cite{FJKr2} that a determining form for the Lorenz
system has stationary solutions which are not in the global attractor of the Lorenz system.

  A solution $v(t,\cdot)$ is a stationary
solution of
$\eqref{ode_final}$ if and only if $v(t,s)=v_0(s)$ for all $t$ and
\[
\nu{}Av_0(s)=g-P_NB(\varphi v_0(s)+W(v_0)(s)),
\]
that is, if and only if it satisfies the following differential algebraic equation
\begin{equation}
\label{eq1}
\nu{}Av_0(s)=g-P_NB(\varphi v_0(s)+w_0(s)),
\end{equation}
\begin{equation}
\label{eq2}
\frac{dw_0(s)}{ds}+\nu{}Aw_0(s)=h-Q_NB(\varphi v_0(s)+w_0(s))\;,
\end{equation}
where $w_0=W(v_0)$.
In this case the energy and enstrophy balances are
\begin{equation}
\label{energy2}
\frac{1}{2}\frac{d}{ds}|w_0|^2+\nu\|u_0\|^2=(f,u_0),
\end{equation}
\begin{equation}
\label{enstrophy2}
\frac{1}{2}\frac{d}{ds}\|w_0\|^2+\nu|Au_0|^2=(f,Au_0),
\end{equation}
where $u_0=v_0+w_0$  (compare to  \eqref{energyeq}, \eqref{enstrophyeq}).

It is shown in  \cite{DFJ} that solutions on the global attractor of the
NSE must satisfy $\nu\|u(t)\|^2\leq{}|f||u(t)|$ for all
$t\in\mathbb{R}$, with equality holding for some $t$ if and only
if $f$ is an eigenvector of $A$.  One might ask if at least the stationary
solutions of \eqref{ode_final} satisfy an analogous property.
We will give below a partial answer for this case, which involves an
application of \eqref{energy2}, \eqref{enstrophy2}.

Let $u_0=v_0+w_0$. We first prove the following:

\begin{lemma}
\label{lemma8.1}
Let $w_0$ and $v_0$ satisfy \eqref{eq1} and \eqref{eq2}, and let
$\sigma_0<\sigma_1$.  Suppose that $|w_0(s)|$ is nondecreasing
in $(\sigma_0,\sigma_1)$, then
\begin{enumerate}
\renewcommand{\theenumi}{(\roman{enumi})}
\renewcommand{\labelenumi}{\theenumi}
\item $\nu\|u_0(s)\|^2\leq{}|f||u_0(s)|$ for all $s\in(\sigma_0,\sigma_1)$.
\item If, in addition, $w_0(s)$ is nonzero for all $s\in(\sigma_0,\sigma_1)$, $\|w_0(s)\|^2/|w_0(s)|$ is nondecreasing, and for some
$s_0\in(\sigma_0,\sigma_1)$, we have $\|u_0(s_0)\|^2/|u_0(s_0)|=|f|/\nu$,
 then $f$ is an eigenvector of $A$ for the eigenvalue
$\lambda^0=\|u_0(s_0)\|^2/|u_0(s_0)|^2$, $\lambda^0>(N\kappa_0)^2$,  and
$f=Q_Nf$.
\end{enumerate}
\end{lemma}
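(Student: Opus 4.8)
The plan is to read off everything from the energy and enstrophy balances \eqref{energy2} and \eqref{enstrophy2}, which are available to us. For part (i), I would rewrite \eqref{energy2} as
\[
\nu\|u_0(s)\|^2=(f,u_0(s))-\tfrac{1}{2}\tfrac{d}{ds}|w_0(s)|^2 .
\]
Since $|w_0|$ is nondecreasing on $(\sigma_0,\sigma_1)$ we have $\frac{d}{ds}|w_0|^2\ge0$ there, while Cauchy--Schwarz gives $(f,u_0)\le|f|\,|u_0|$. Combining the two yields $\nu\|u_0(s)\|^2\le|f|\,|u_0(s)|$ on $(\sigma_0,\sigma_1)$, which is (i).

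For part (ii), I would first analyze the equality case. The hypothesis $\|u_0(s_0)\|^2/|u_0(s_0)|=|f|/\nu$ is exactly equality in (i) at $s_0$, so in the chain
\[
|f|\,|u_0|=\nu\|u_0\|^2=(f,u_0)-\tfrac{1}{2}\tfrac{d}{ds}|w_0|^2\le|f|\,|u_0|-\tfrac{1}{2}\tfrac{d}{ds}|w_0|^2\le|f|\,|u_0|
\]
(at $s=s_0$) every inequality is an equality. Monotonicity of $|w_0|$ then forces $\frac{d}{ds}|w_0|^2\big|_{s_0}=0$, and the Cauchy--Schwarz equality $(f,u_0(s_0))=|f|\,|u_0(s_0)|$ forces $f$ to be a positive multiple of $u_0(s_0)$ (note $u_0(s_0)\ne0$, since the quotient is finite and $|f|>0$). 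Solving together with $\nu\|u_0(s_0)\|^2=|f|\,|u_0(s_0)|$ fixes the multiple and gives
\[
f=\nu\lambda^0 u_0(s_0),\qquad \lambda^0:=\frac{\|u_0(s_0)\|^2}{|u_0(s_0)|^2}.
\]

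The crucial step is to promote this energy-level information to the enstrophy level at $s_0$. From $\frac{d}{ds}|w_0|^2\big|_{s_0}=0$ and $w_0(s_0)\ne0$ I get $\frac{d}{ds}|w_0|\big|_{s_0}=0$, so differentiating the (assumed nondecreasing) quotient $\|w_0\|^2/|w_0|$ at $s_0$ leaves
\[
0\le\frac{d}{ds}\Big(\frac{\|w_0\|^2}{|w_0|}\Big)\Big|_{s_0}=\frac{1}{|w_0(s_0)|}\,\frac{d}{ds}\|w_0\|^2\Big|_{s_0},
\]
whence $\frac{d}{ds}\|w_0\|^2\big|_{s_0}\ge0$. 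Substituting $f=\nu\lambda^0u_0(s_0)$, so that $(f,Au_0(s_0))=\nu\lambda^0\|u_0(s_0)\|^2$, the enstrophy balance \eqref{enstrophy2} and this sign give $\nu|Au_0(s_0)|^2\le\nu\lambda^0\|u_0(s_0)\|^2$. Since also $|Au_0(s_0)|^2\ge\|u_0(s_0)\|^4/|u_0(s_0)|^2=\lambda^0\|u_0(s_0)\|^2$ by Cauchy--Schwarz applied to $\|u_0\|^2=(Au_0,u_0)$, equality holds and $Au_0(s_0)$ is parallel to $u_0(s_0)$; thus $u_0(s_0)$, and hence $f=\nu\lambda^0u_0(s_0)$, is an eigenvector of $A$ for $\lambda^0$. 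Finally, every eigenspace of $A$ lies entirely in $P_NH$ (eigenvalue $\le(N\kappa_0)^2$) or entirely in $Q_NH$ (eigenvalue $>(N\kappa_0)^2$), because $P_N$ keeps exactly the modes with $|k|\le N$ and all modes of a fixed eigenvalue share the same $|k|$. As $u_0(s_0)$ sits in the single eigenspace for $\lambda^0$ and its $Q_N$-component $w_0(s_0)$ is nonzero, that eigenspace must lie in $Q_NH$; therefore $\lambda^0>(N\kappa_0)^2$, $v_0(s_0)=P_Nu_0(s_0)=0$, and $f=\nu\lambda^0u_0(s_0)\in Q_NH$, i.e. $f=Q_Nf$.

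I expect the main obstacle to be this middle bridge: a priori the vanishing of $\frac{d}{ds}|w_0|^2$ at $s_0$ says nothing about $\frac{d}{ds}\|w_0\|^2$, yet the enstrophy argument needs a sign for the latter. The hypothesis that $\|w_0\|^2/|w_0|$ is nondecreasing is precisely what transfers the sign information, and recognizing that this is the role of that hypothesis is the heart of the proof; the remaining pieces are the two Cauchy--Schwarz equality analyses and the elementary eigenspace bookkeeping for $P_N,Q_N$.
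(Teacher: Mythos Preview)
Your proof of part (i) matches the paper's exactly. For part (ii), your argument is correct but follows a genuinely different route from the paper's.

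The paper combines \eqref{energy2} and \eqref{enstrophy2} into a single integral identity for $\chi_w(s)-\chi_w(s_0)$ (their formula~(\ref{2a})), whose integrand contains the term $-|\tfrac{f}{2\nu}-(A-\tfrac{\lambda_u}{2})u|^2$; nonnegativity of the integrand at $s_0$ then forces $\tfrac{f}{2\nu}=(A-\tfrac{\lambda^0}{2})u(s_0)$ in one stroke, and the eigenvector conclusion follows by direct algebra after substituting $u(s_0)=cf$. You instead work entirely at the single point $s_0$ and run \emph{two} separate Cauchy--Schwarz equality analyses: first on $(f,u_0)$ to get $f\parallel u_0(s_0)$, then on $(Au_0,u_0)$ to get $Au_0(s_0)\parallel u_0(s_0)$; the bridge between them is your observation that the monotone quotient $\|w_0\|^2/|w_0|$ has nonnegative derivative at a point where $\tfrac{d}{ds}|w_0|=0$, which transfers sign control to the enstrophy level. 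Your approach is more elementary and makes the role of each hypothesis very transparent; the paper's approach, via the integrated identity, is a bit more robust regarding regularity --- they explicitly handle the a.e.\ vs.\ everywhere issue through lower semicontinuity of $|Au|$, whereas your argument tacitly assumes that \eqref{energy2}, \eqref{enstrophy2} hold pointwise at $s_0$ and that $|w_0|$, $\|w_0\|^2$ are differentiable there. That assumption is justified by the available bounds on $|Aw|$, but you should say so.
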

\begin{proof}
In the proof we drop the subscript on $w_0,v_0,u_0$, to simplify notation.
 Denote $\chi_w=\|w\|^2/|w|$, $\chi_u=\|u\|^2/|u|$,
$\lambda_w=\|w\|^2/|w|^2$, $\lambda_u=\|u\|^2/|u|^2$.
Since $w(s)\neq{}0$ in $(\sigma_0,\sigma_1)$, these quantities are well-defined.
From \eqref{energy2} and the fact that $|w|$ is nondecreasing it follows that
$(f,u)-\nu\|u\|^2\geq{0}$, which in turn implies that
$\chi_u(s)\leq{}|f|/\nu$, proving (i).

Passing to the proof of (ii), we start by observing that from \eqref{energy2},
\eqref{enstrophy2} it follows
that
\begin{equation}\label{2a}
\begin{aligned}
&\chi_w(s)-\chi_w(s_0)=\\&\int^s_{s_0}\frac{2\nu}{|w|}\left[|\frac{f}{2\nu}|^2-\frac{\chi^2_u}{4}-|\frac{f}{2\nu}-(A-\frac{\lambda_u}{2})u|^2-\frac{\lambda_w-\lambda_u}{2}((\frac{f}{\nu},u)-\|u\|^2)\right]d\sigma.
\end{aligned}
\end{equation}
By (i), since $|w|$ is nondecreasing, and  the assumption $\chi_u(s_0)=|f|/\nu$,
we deduce from \eqref{energy2} that $(f,u)=|f||u|$; hence
we have $u(s_0)=cf$, where $c=|f|^2/\nu\|f\|^2$.

Since $\chi_w$ is assumed to be nondecreasing, the integrand in
\eqref{2a} is nonnegative for almost all $\sigma$; also every
summand in it is continuous, except for $|f/2\nu-(A-\lambda_u/2)u|$, which is
lower semicontinuous.  This suffices however, to ensure that the integrand
is in fact nonnegative at every point of $(\sigma_0,\sigma_1)$.

In particular, at the point $s_0$ we have that $\|u\|^2=(f/\nu,u)$, thus it
follows that
    \begin{equation}
\label{12}
   |\frac{f}{2\nu}-\left(A-{\lambda_{u(s_0)}\over 2}\right)u(s_0)|=0,
    \end{equation}
hence $(A-\lambda^0)f=0$, where $\lambda^0=\lambda_{u(s_0)}$.  Indeed,
\begin{equation*}
\begin{aligned}
&(A-\lambda^0)f=(A-\frac{\lambda^0}{2})f-\frac{\lambda^0}{2}f=\frac{\nu\|f\|^2}{|f|^2}(A-\frac{\lambda^0}{2})u(s_0)-\frac{\lambda^0}{2}f=\frac{\nu\|f\|^2}{|f|^2}\frac{f}{2\nu}-\frac{\lambda^0}{2}f=0.
\end{aligned}
\end{equation*}
 If $\lambda^0\leq{}(N\kappa_0)^2$ it would follow that $f=g$, which gives that
$w(s_0)=0$, but this is contrary to our assumption.  Thus we have that
$\lambda^0>(N\kappa_0)^2$, and so $f=h$, $g=0$, $v(s_0)=0$,
$w(s_0)=h/\nu\lambda^0$ and $Ah=\lambda^0h$.
\end{proof}
Let $h_0=h/\nu\lambda^0$. Now we need the following lemma:
\begin{lemma}
\label{lem222}
Assume that $v_0$ and $w_0$ satisfy \eqref{eq1} and \eqref{eq2}, and that $g=0$
and $h$ is an eigenvector of $A$.  Then for large enough $N$, $v(s)=0$ and $w(s)=h_0$ for
all $s$, where $\lambda^0$ is an eigenvalue of $A$.
\end{lemma}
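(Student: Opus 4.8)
\emph{The plan is} to show that the cutoff $\varphi$ is inactive along the solution, rewrite the differential--algebraic system \eqref{eq1}--\eqref{eq2} as a perturbation of the exact solution $u_0\equiv h_0$, and close an energy estimate that is driven entirely by the smallness of $h_0$ (its eigenvalue exceeds $(N\kappa_0)^2$). First I would record the structural facts. Since $g=P_Nf=0$ we have $f=h=Q_Nf$, so the eigenvector $h$ lies in $Q_NH$ and its eigenvalue satisfies $\lambda^0>(N\kappa_0)^2$; hence $h_0=h/(\nu\lambda^0)\in Q_NH$ obeys $\nu A h_0=h$ together with $\|h_0\|=|f|/(\nu(\lambda^0)^{1/2})\le\nu\kappa_0 G/N$. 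Because $h$ is an eigenfunction of the two--dimensional Stokes operator, $B(h_0,h_0)=0$: writing $h_0=\nabla^\perp\psi$ with $-\Delta\psi=\lambda^0\psi$, the vorticity is $\omega=\lambda^0\psi$, so $h_0\cdot\nabla\omega=\lambda^0\,\nabla^\perp\psi\cdot\nabla\psi=0$, and a divergence--free, curl--free, mean--zero field vanishes. Next I would observe that $\varphi\equiv1$: with $g=0$ the radius in \eqref{rad_2} of Theorem~\ref{absorbing_ball1} is $r_0=\nu\kappa_0 G\,c_T(\ln eN)^{1/2}G(6+\beta(G,N))\beta(G,N)$, and since $G(\ln eN)^{1/2}\beta(G,N)\to0$ as $N\to\infty$ we get $r_0<2\nu\kappa_0 G$ for $N$ large. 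The stationary solution is bounded, hence lies in $\cM\subseteq\calB_{r_0}$, so $\|v_0(s)\|\le r_0<2\nu\kappa_0 G$ and therefore $\varphi=1$ for every $s$.

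With $\varphi=1$, adding \eqref{eq1} and \eqref{eq2} shows that $u_0=v_0+w_0$ satisfies $\tfrac{d}{ds}w_0+\nu A u_0+B(u_0,u_0)=h$. Subtracting $\nu A h_0=h$ and using $B(h_0,h_0)=0$, I set $\zeta=u_0-h_0=\delta+\gamma$ with $\delta=v_0\in P_NH$ and $\gamma=w_0-h_0\in Q_NH$, obtaining
$$\frac{d\gamma}{ds}+\nu A\zeta+B(h_0,\zeta)+B(\zeta,h_0)+B(\zeta,\zeta)=0.$$
Taking the scalar product with $\zeta$, the cross term $(\tfrac{d\gamma}{ds},\delta)$ vanishes by orthogonality of $P_NH$ and $Q_NH$, while $(B(h_0,\zeta),\zeta)=(B(\zeta,\zeta),\zeta)=0$ and $(B(\zeta,h_0),\zeta)=-(B(\zeta,\zeta),h_0)$ by \eqref{flip}, so that
$$\tfrac12\tfrac{d}{ds}|\gamma|^2+\nu\|\zeta\|^2=(B(\zeta,\zeta),h_0).$$

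The right--hand side is the one place the argument can fail, and it is absorbed precisely by the smallness of $h_0$. By \eqref{A46c}, $|(B(\zeta,\zeta),h_0)|\le \csubA|\zeta|\,\|\zeta\|\,\|h_0\|\le \csubA(\nu G/N)\|\zeta\|^2\le\tfrac{\nu}{2}\|\zeta\|^2$ for $N\ge 2\csubA G$ (here I used $|\zeta|\le\|\zeta\|/\kappa_0$ and $|h_0|^{1/2}|Ah_0|^{1/2}=\|h_0\|$). Combined with $\|\zeta\|^2\ge\|\gamma\|^2\ge(N\kappa_0)^2|\gamma|^2$ this gives $\tfrac{d}{ds}|\gamma|^2+\nu(N\kappa_0)^2|\gamma|^2\le0$. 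Since $\gamma$ is bounded on all of $\bR$, Lemma~\ref{useit}(iii) (a backward Gronwall argument) forces $\gamma\equiv0$, i.e. $w_0\equiv h_0$.

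Finally, with $\gamma\equiv0$ one has $\zeta=\delta=v_0$, and the low--mode relation \eqref{eq1} reduces, after the same cancellations, to $\nu\|v_0\|^2=(B(v_0,v_0),h_0)$; bounding the right--hand side as above by $\csubA(\nu G/N)\|v_0\|^2$ and taking $N>\csubA G$ yields $v_0\equiv0$. Hence $v\equiv0$ and $w\equiv h_0$. The only genuine obstacle is the first step, namely verifying that $\varphi=1$ so that the NSE--type cancellations become available; once that is in place, everything is governed by the estimate $\|h_0\|=O(G/N)$, which is the quantitative reflection of the hypothesis that $h$ is a high eigenmode.
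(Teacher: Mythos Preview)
Your argument is correct, but it follows a genuinely different route from the paper's proof. The paper never removes the cutoff $\varphi$; instead it works directly with \eqref{eq1}--\eqref{eq2}, first taking the scalar product of \eqref{eq1} with $v_0$ and using \eqref{agmon_1} to obtain $\|v_0\|\le c_1\|\delta\|$ with $\delta=w_0-h_0$ and $c_1=O(G(\ln eN)^{1/2}/N)$, and then running a separate energy estimate on $|\delta|^2$ from \eqref{eq2}, where the smallness of $c_1$ is fed back in to close the inequality; the conditions on $N$ that emerge are \eqref{lastbound1} and \eqref{lastbound2}. By contrast, you first invoke Theorem~\ref{absorbing_ball1} with $g=0$ to force $\varphi\equiv 1$ (this is where your argument actually spends its largeness of $N$, requiring roughly $N\gtrsim G^2\ln N$ so that $r_0<2\nu\kappa_0G$), then add \eqref{eq1} and \eqref{eq2}, subtract the exact solution $h_0$, and exploit the orthogonality $P_NH\perp Q_NH$ to reduce the entire problem to the single relation $\tfrac12\tfrac{d}{ds}|\gamma|^2+\nu\|\zeta\|^2=(B(\zeta,\zeta),h_0)$, where only one nonlinear term needs to be absorbed via $\|h_0\|\le\nu\kappa_0G/N$. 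Your path is cleaner and uses fewer bilinear estimates (essentially just \eqref{A46c} and Poincar\'e), at the price of the preliminary absorbing-ball step; the paper's path avoids that step but compensates with a two-stage bootstrap and the more intricate constants $\beta(G,N)$ and $c_1$.
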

\begin{proof}
Once again, denote $w_0,v_0, u_0$ by $w,v,u$ for simplicity.
From \eqref{eq1}, \eqref{agmon_1}, and \eqref{3.19.5} we obtain that
\begin{equation*}
\begin{aligned}
\label{}
\nu\|v\|^2&=-(B(u,w),v)=-(B(w,w),v)+(B(v,v),w)\\
&\leq{}\-(B(w,w),v)+c_T\|v\|^2\|w\|\frac{(\ln{eN})^{1/2}}{\kappa_0N},
\end{aligned}
\end{equation*}
whence
\begin{equation}\label{eqq1}
\nu\|v\|^2\left[1-c_T\frac{G\beta(G,N)}{N}(\ln{eN})^{1/2}\right]\leq{}-(B(w,w),v).
\end{equation}

 By denoting $w=h_0+\delta$ we have
\begin{equation}\label{eqq2}
\begin{aligned}
-(B(w,w),v)=&-(B(\delta,w),v)-(B(h_0,\delta),v)\\
\leq{}&(\ln{eN})^{1/2}\|\delta\|(c_T|w|\|v\|+c_B|h_0|\|v\|)\\
\leq{}&(\ln{eN})^{1/2}(c_T\frac{\nu{G}\beta(G,N)}{N}+c_B\nu\frac{\kappa_0^2G}{\lambda^0})\|v\|\|\delta\|.
\end{aligned}
\end{equation}

From \eqref{eqq1} and \eqref{eqq2} we obtain
\begin{equation}
\label{vdelta}
\|v\|\left[1-c_T\frac{G\beta(G,N)}{N}(\ln{eN})^{1/2}\right]\leq{}(\ln{eN})^{1/2}\frac{G}{N}(c_T\beta(G,N)+c_B\frac{\kappa_0^2N}{\lambda^0})\|\delta\|.
\end{equation}

 Therefore if
 \begin{equation}
\label{lastbound1}
N>2c_TG\beta(G,N)(\ln{eN})^{1/2},
 \end{equation}
we have the bound
\begin{equation}
\label{vdelta1}
\|v\|\leq{}c_1\|\delta\|,
\end{equation}
where
\[
c_1=2(\ln{eN})^{1/2}\frac{G}{N}(c_T\beta(G,N)+c_B\frac{1}{N}).
\]
Now using \eqref{eq2} and \eqref{vdelta1} we obtain that
\begin{equation*}
\label{eqdelta}
\begin{aligned}
\frac{1}{2}&\frac{d}{ds}|\delta|^2+\nu\|\delta\|^2=-(B(u,u),\delta)=-(B(v+h_0+\delta,v+h_0),\delta)\\
&=-(B(v+\delta,h_0),\delta)-(B(u,v),\delta)\\
&\leq{} c_T(\ln{eN})^{1/2}\|v\|\|h_0\||\delta|+\csubL{}|\delta|\|\delta\|\|h_0\|+c_T(\ln{eN})^{1/2}\|u\|\|v\||\delta| \\
&\leq{}\nu(c_Tc_1(\ln{eN})^{1/2}+\csubL{})\frac{G}{N^2}\|\delta\|^2+c_Tc_1(\ln eN)^{1/2}\frac{\|v\|_X+\nu\kappa_0G\beta(G,N)}{\kappa_0N}\|\delta\|^2 \;.
\end{aligned}
\end{equation*}

It follows from Lemma~\ref{useit} and Theorem~\ref{absorbing_ball1}, that if $N$ is such that \eqref{lastbound1}
holds and moreover
\begin{equation}
\label{lastbound2}
\begin{aligned}
&N^2>2G(c_Tc_1(\ln{eN})^{1/2}+\csubL{}),\\&
N>2c_Tc_1(G+c_T(\ln{eN})^{1/2}G^2(6+\beta(G,N))\beta(G,N)+G\beta(G,N)),
\end{aligned}
\end{equation}
then $\delta(s)=0$ for all $s$. Therefore from \eqref{vdelta1} we have for all $s$ that
$v(s)=0$ , and hence $u(s)=h/\nu\lambda^0=w(s)$.   Since $h$ is an eigenvector
of $A$, it follows from   \eqref{energy2} that $Ah=\lambda^0 h$
\end{proof}

Therefore we obtain the following proposition.
\begin{proposition}\label{prop111}
Suppose $N$ satisfies \eqref{lastbound1} and \eqref{lastbound2}.
 If the assumptions of  Lemma~\ref{lemma8.1} hold for $w_0=W(v_0)$ for some stationary solution
$v_0$ of \eqref{ode_final}, then any
stationary solution of \eqref{ode_final} is zero.
\end{proposition}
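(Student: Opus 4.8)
The plan is to obtain the proposition as a clean composition of Lemmas~\ref{lemma8.1} and~\ref{lem222}, exploiting the fact that Lemma~\ref{lemma8.1}(ii) extracts a constraint on the \emph{force} $f$ from a single distinguished stationary solution, and that this constraint is exactly the hypothesis needed to run Lemma~\ref{lem222} on \emph{every} stationary solution. All the analytic work is already contained in the two lemmas; the proposition is the logical bridge joining them.

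First I would apply Lemma~\ref{lemma8.1}(ii) to the stationary solution $v_0$ furnished by the hypothesis, with $w_0 = W(v_0)$ and $u_0 = v_0 + w_0$. Because the hypothesis is precisely that the assumptions of that lemma hold for $w_0 = W(v_0)$, its conclusion is available: $f$ is an eigenvector of $A$ with eigenvalue $\lambda^0 > (N\kappa_0)^2$, and $f = Q_N f$. Reading this through the decomposition $f = g + h$ with $g = P_N f$ and $h = Q_N f$, it says exactly that $g = 0$ and $h = f$, so in particular $h$ is an eigenvector of $A$. The crucial observation is that $f = Q_N f$ is a property of the force alone, hence transferable: it is deduced from the one solution $v_0$ singled out in the hypothesis, but once known it holds universally and can be fed into the next lemma for all stationary solutions.

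With $g = 0$ and $h$ an eigenvector of $A$ in hand, these are precisely the standing hypotheses of Lemma~\ref{lem222}, and the conditions \eqref{lastbound1}--\eqref{lastbound2} assumed on $N$ are exactly the ``$N$ large enough'' requirements invoked in the proof of that lemma. I would then apply Lemma~\ref{lem222} to an \emph{arbitrary} stationary solution of \eqref{ode_final}: any such solution satisfies the differential-algebraic system \eqref{eq1}--\eqref{eq2} with this same $g$ and $h$, so the lemma forces $v(s) = 0$ (and $w(s) = h_0$) for all $s$. Since a stationary solution of \eqref{ode_final} is an element of $X$ whose $P_N$-component is $v$, this says every stationary solution vanishes, which is the claim. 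No further estimation remains, as the largeness of $N$ has already been packaged into \eqref{lastbound1} and \eqref{lastbound2}; the only point demanding care is the bookkeeping of the force splitting $f = g + h$ and the verification that the hypotheses of the two lemmas dovetail.
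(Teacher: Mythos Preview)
Your proposal is correct and matches the paper's approach exactly: the paper presents the proposition immediately after Lemma~\ref{lem222} with no separate proof, as it is precisely the composition you describe --- Lemma~\ref{lemma8.1}(ii) applied to the distinguished $v_0$ forces $g=0$ and $h$ to be an eigenvector of $A$, whereupon Lemma~\ref{lem222} (whose ``$N$ large enough'' is codified by \eqref{lastbound1}--\eqref{lastbound2}) kills every stationary solution.
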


We finish by simplifying the conditions on $N$ in Proposition \ref{prop111}.  Suppose that $G\gg1$ and
\begin{equation}
\label{lb1}
N>CG(\ln{eN})^{1/2},
\end{equation}
for some sufficiently large constant $C$.  Then \eqref{lastbound1} holds and
moreover $\beta(G,N)\leq{}36c_T/C$, and so $\beta(G,N)$ is small provided that $C$ is large
enough. Choose $C$ such that $\beta(G,N)<1$.  Then $c_1\leq{}2(c_T+c_B)/C$.
Choose $C$, so that also $c_1<1$.  Moreover, the bound in
Theorem~\ref{absorbing_ball1} is less than
$\nu\kappa_0(G+2c_TG^2(\ln{eN})^{1/2})\leq{}C_2G^{2}(\ln{eN})^{1/2}$ for some
constant $C_2$, which does not depend on $C$, since $G>>1$.  We obtain then that the
first inequality in \eqref{lastbound2} holds, and to satisfy the second
inequality in \eqref{lastbound2}, it suffices to take
\begin{equation}
\label{finalbound}
N>\varepsilon{}G^2(\ln{eN})^{1/2},
\end{equation}
 for any fixed $\varepsilon$ (provided that $G$ is large). Note that if
\eqref{finalbound} holds then \eqref{lb1} also holds.  It follows that
conclusion of Proposition~\ref{prop111} is true as long as inequality
\eqref{finalbound} holds for $N$.

\section{Another approach}
We present now in a concise manner an approach
producing an ODE similar to \eqref{ode_final} for the determining modes $P_Nu$, which however can also be obtained for other determining parameters
such as nodal values, finite elements, local spatial averages and interpolants on fine enough spatial grids (see, e.g.,  \cite{CJTi1,FT1, FT2, JT93}, for works related to various determining parameters).
We first outline this approach for the case of determining modes
so it can be compared to the first approach.  Then we will state some
results for other interpolants.
This unified approach is fully developed in \cite{FJKrT2}.

Let $u(\cdot)$ be a trajectory  in $\cA$, and suppose $w$ satisfies the equation
\begin{equation}\label{wteqn}
\frac{dw}{ds}+\nu Aw+B(w,w)=f-\nu\k0^2\mu P_N (w-u)\;,
\end{equation}
for $s > s_0$ , with $w(s_0) \in H$. Here $\mu >0$ is a damping dimensionless parameter to be determined later in terms of the Grashof number $G$.

The difference $\delta=w-u$ then satisfies
$$
\frac{d \delta}{ds} + \nu A\delta +B(\delta,u)+B(u,\delta)+B(\delta,\delta)=-\nu\k0^2\mu P_N \delta
$$
and hence
\begin{align*}
\frac{1}{2}\frac{d }{ds}|\delta|^2 + \nu \|\delta\|^2 + (B(\delta, u),\delta) &= -\nu\k0^2\mu |P_N \delta|^2  \\
   &= -\nu\k0^2\mu |\delta|^2+\nu\k0^2\mu |Q_N \delta|^2 \\
   &\le -\nu\k0^2\mu |\delta|^2+\frac{\nu\mu}{(N+1)^2} \|Q_N \delta\|^2\;.
\end{align*}
Now take $\mu$ large enough so that
\begin{equation}\label{mucond2}
\alpha=1 -2\cL^2G^2 +2\mu > 0\;,
\end{equation}
and then $N$, large enough, so that
\begin{equation}\label{mucond}
{\mu \over (N+1)^2} \le {1\over 4}\;.
\end{equation}
We then have that
\begin{align*}
\frac{1}{2}\frac{d }{ds}|\delta|^2 +\nu \|\delta\|^2 &\le  \cL |\delta|\|\delta\|\|u\| -\nu\k0^2\mu |\delta|^2
+\frac{\nu\mu}{(N+1)^2} \|\delta\|^2 \\
   &\le \frac{\nu}{4} \|\delta\|^2+ \frac{\cL^2}{\nu}|\delta|^2\|u\|^2-\nu\k0^2\mu | \delta|^2
    +  \frac{\nu}{4} \|\delta\|^2\\
   &\le \frac{\nu}{2} \|\delta\|^2+ \cL^2\nu\k0^2G^2|\delta|^2-\nu\k0^2\mu | \delta|^2   \end{align*}
so that
\begin{align*}
\frac{d }{ds}|\delta|^2 +\alpha \nu\k0^2|\delta|^2 \le 0\;,
\end{align*}
and
$$
|\delta(s)| \le |\delta(s_0)|e^{-\alpha\nu\k0^2(s-s_0)}\;.
$$
If the solution $w(s)$ to equation \eqref{wteqn} is defined for all
$s \in [s_0, \infty)$, then $\delta(s)\to 0$ as $s \to \infty$.
Moreover, if $w$ is defined, and bounded on $(-\infty, \infty)$, then
$\delta(s)=0$ for all $s \in \bR$.  Thus, if we first choose $\mu$ to satisfy \eqref{mucond2}, and then
$N$ to satisfy \eqref{mucond}, the only bounded solution to \eqref{wteqn}
on $(-\infty, \infty)$ is $w=u$.

The analog $\tilde W$, of the map $W$ defined in Section \ref{secExtend} is now given as follows: let $v\in C_b(\mathbb{R},P_NV)$ and let
$w=\tilde{W}(v)$ be the unique solution of the equation
\begin{equation}\label{wtteqn}
\frac{dw}{ds}+\nu Aw+B(w)=f-\nu\k0^2\mu P_N (w-v)
\end{equation}
that is  bounded on $\bR$.  This solution
can be constructed using a Galerkin approximation.
The uniqueness of $w$ can be proven by the same method as in the proof of Lemma~\ref{3.1};
To obtain an a priori estimate, first note that
\begin{align*}
\frac{1}{2}\frac{d }{ds}\|w\|^2 + \nu |Aw|^2 =(f,Aw) -\nu\k0^2\mu |P_N Aw|^2 +\nu\k0^2\mu(v,Aw)
\end{align*}
so
\begin{align*}
\frac{d }{ds}\|w\|^2 + 2\nu |Aw|^2 \le
2(|f|+\nu\k0\mu\|v\|_X)|Aw| \;,
\end{align*}
and we have by Lemma \ref{useit}
$$
\|w\|  \le
\frac{|f|}{\nu\k0}+\mu\|v\|_X\;.
$$

It is easy to show that if $v(\cdot)=P_Nu(\cdot)$ for a trajectory
$u(\cdot)$ in $\cA$,  then $\tilde{W}(v)=u$.  This suggests that
we study the following analog of  \eqref{ode_final}
\begin{equation}\label{gam}
\frac{dv(t)}{dt}=P_Nf -\nu Av(t)-P_NB(\tilde{W}(v(t)))
\;.
\end{equation}
The fact that for large $\mu$ the map $\tilde{W}:v\mapsto \tilde{W}(v)$ is
Lipschitz is again obtained by a method similar to the one used in the proof
of Corollary~\ref{w_lipschitz}, and that the right-hand side of \eqref{gam} is
locally Lipschitz as a map from the space $C_b(\mathbb{R},P_NV)$ to itself is
proved similarly. With these considerations the differential equation
\eqref{gam} is an ODE in the space $C_b(\mathbb{R},P_NV)$.  Note that the ODE \eqref{gam}
depends not only on $N$, but also on the parameter  $\mu$ through the function $\tilde W$.

As in Section 7, translation invariant solutions of
\eqref{gam} are exactly those that come from the NSE.
For example, if $v$ is translation invariant, then
$$v(t)(s)=v(t+s)(0)=v_0(t+s)\quad \text{and} \quad w(t)(s)=w(t+s)(0)=w_0(t+s)\;.
$$  Moreover,
since
$$
\frac{dv_0}{ds}=\frac{dv_0}{dt} \quad \text{at} \ s+t\;,
$$
we rewrite \eqref{gam} as
\begin{equation}\label{gam2}
\frac{dv}{ds}+\nu Av+P_NB(w)=P_Nf
\;,
\end{equation}
and subtract $P_N$ applied to \eqref{wteqn}
\begin{equation*}
\frac{dP_N w}{ds}+\nu AP_Nw+P_NB(w)=P_Nf - \nu\k0^2\mu P_N(w-v)
\end{equation*}
to obtain
\begin{equation}\label{xieq}
\frac{d\xi}{ds}+\nu A \xi=\nu\k0^2\mu \xi\;,
\end{equation}
where $\xi=v-P_Nw=P_N(v-w)$.   Since $\xi(s)$ is defined and bounded
for all $s \in \bR$, and \eqref{xieq} is linear, we have $\xi(s)=0$ for all
$s \in \bR$, provided that $\kappa_0^2\mu$ is not an eigenvalue of $A$, which can always be achieved.  As a consequence \eqref{wteqn} reduces to the NSE.

 Note that we do not
have to prepare the nonlinear term as was done in \eqref{ode_final}
with the cut-off function $\varphi$ in order to establish these properties.  Preparation of the
nonlinear term is, however, needed if we want to prove the existence
of absorbing ball for \eqref{gam}.

One advantage that \eqref{gam} has over \eqref{ode_final} comes in the study of
stationary
solutions.  The analog for \eqref{gam} of the DAE in Section \ref{statsec} is
\begin{align*}
\nu Av+P_NB(w)&=P_Nf \\
\frac{dw}{ds}+\nu Aw+B(w)&=f-\nu\k0^2\mu P_N (w-v)\;.
\end{align*}
Note that the algebraic relation can be solved explicitly
$$
v=\nu^{-1}A^{-1}P_N(f-B(w))\;.
$$
 Thus in contrast with \eqref{ODE},  stationary solutions of \eqref{gam}
 are directly associated with the globally bounded solutions of an explicit Navier-Stokes type equation in $H$, namely
\begin{equation}\label{statEdriss}
\frac{dw}{ds}+\nu[A+\k0^2\mu P_N]w+
[I + \k0^2\mu P_NA^{-1}P_N ]B(w)=[I + \k0^2\mu P_NA^{-1}P_N]f \;.
\end{equation}
The study of the equations \eqref{gam} and \eqref{statEdriss} for various types of determining parameters will be presented in \cite{FJKrT2}.

In \cite{FJKrT2} we also study a larger class of determining parameters. In particular as those described below.
Let $J$ be a bounded linear operator on $H$ (approximating the identity)
satisfying the following conditions:
\begin{enumerate}[(i)]\label{listl}
	\item\label{onec} 	 $|\phi - J(\phi)| \le  c_I h^2 |A \phi |$, for every $\phi \in D(A)$, where  $h$ is the size of the grid; moreover,  $h^2$ is required to be at most $O((\kappa_0^2 G \log (1+G))^{-1})$, i.e. $N=(\kappa_0 h)^{-2}$ is of the order of $O(G \log (1+G))$.
	\item\label{twoc} $J H\subset D(A^2)$
	\item\label{threec} there exists a linear bounded operator $K$ on $H$ such that $J A=K$ on $D(A)$.
\end{enumerate}
Such $J(u)$ is a determining system of parameters for \eqref{NSE}. This is proven as before by using the equation
\begin{equation}\label{wIeq1}
\frac{dw}{ds}+\nu Aw+B(w,w)=f-\mu\nu\k0^2 (J (w)-J(u)),
\end{equation}
where $\mu$ is in the appropriate interval.

The map $W$ is defined using this equation instead of \eqref{wteqn}.
Now the equation which replaces the equation \eqref{gam} is the following
\begin{equation}\label{11.10}
\frac{d}{dt}v(t)+\nu J A W(v(t))-\nu\kappa_0^{-2}  P_{JH}A^2(JW(v(t))-v(t))  +J B(W(v(t)))=J f
\end{equation}
which is an ordinary differential equation in the ball $B(0,R(\eta,\mu))$ of the space $X=C_b(\mathbb{R},J(H))$. In \eqref{11.10} $P_{JH}$ denotes the orthogonal projection of $H$ onto the closure of the  range of $J$.

We conclude this summary by observing that given a finite rank linear operator $J$ on $H$ satisfying the condition \eqref{onec}, then for every sufficiently small $\varepsilon>0$ there exists a $J'$ satisfying all conditions \eqref{onec}-\eqref{threec} and such that the norm of $J-J'$ as an operator on $H$ is less then $\varepsilon$.
\vskip .2truein
\centerline{{\bf Acknowledgments}}

This paper is dedicated to Professor Peter Constantin, on the occasion of his  60th birthday, as token of friendship and admiration for his contributions to research in partial differential equations and fluid mechanics.

The work of C.F. is supported in part by NSF grant DMS-1109784, that of M.J.
by  DMS-1008661 and DMS-1109638, that of R. K. by the ERC starting grant GA 257110 ÒRaWGÓ, and that of E.S.T.  by DMS-1009950, DMS-1109640 and DMS-1109645, as well as the Minerva Stiftung/Foundation.
\bibliographystyle{acm}
\bibliography{FJKrTi}

\end{document}